\newcommand{\R }{\mathbb{R} }
\newcommand{\LC}{\left(}
\newcommand{\RC}{\right)}
\newtheorem{remark}{Remark}[section]
\newtheorem{lemma}{Lemma}[section]
\newtheorem{prop}{Proposition}[section]
\newtheorem{theorem}{Theorem}[section]
\newcommand{\beq}{\begin{equation}}
\newcommand{\eeq}{\end{equation}}
\newcommand{\ben}{\begin{eqnarray}}
\newcommand{\een}{\end{eqnarray}}
\newcommand{\beno}{\begin{eqnarray*}}
\newcommand{\eeno}{\end{eqnarray*}}
\newdimen\eqjot \eqjot = 1\jot
\def\openupeq{\openup \the\eqjot}
\begin{document}

\title [Orbital stability of two-component peakons]
{Orbital stability of two-component peakons}

\author{Cheng He}
\address{Cheng He\newline
School of Mathematics and Statistics, Ningbo University, Ningbo 315211, P. R. China}
\email{1811071003@nbu.edu.cn}
\author{Xiaochuan Liu}
\address{Xiaochuan Liu\newline
School of Mathematics and Statistics, Xi'an Jiaotong University, Xi'an 710049, P. R. China}
\email{liuxiaochuan@mail.xjtu.edu.cn}
\author{Changzheng Qu}
\address{Changzheng Qu\newline
School of Mathematics and Statistics, Ningbo University, Ningbo 315211, P. R. China}
\email{quchangzheng@nbu.edu.cn}

\begin{abstract}
We prove that the two-component peakon solutions are orbitally stable in the energy space. The system concerned here is a two-component Novikov system, which is an integrable multi-component extension of the integrable Novikov equation. We improve the method for the scalar peakons to the two-component case with genuine nonlinear interactions by establishing optimal inequalities for the conserved quantities involving the coupled structures. Moreover, we also establish the orbital stability for the train-profiles of these two-component peakons by using the refined analysis based on monotonicity of the local energy and an induction method.
\end{abstract}

\maketitle \numberwithin{equation}{section}


\noindent \small {\bf Key words:}\ Novikov equation, two-component Novikov system,  peakons, orbital stability, conservation law, Camassa-Holm equation.\\

\noindent \small {\bf MSC(2020)}:\; 35Q51, 37K45.

\section{Introduction}

In this paper, we are devoted to the orbital stability of the two-component peakon solutions  and the corresponding configuration of train-profiles. The system we are concerned with is the following integrable two-component Novikov system \cite{Li}
\begin{equation}\label{tcNK}
\left\{
\begin{aligned}
&m_t+uvm_x+(2vu_x+uv_x)m=0,\quad m=u-u_{xx},\\
&n_t+uvn_x+(2uv_x+vu_x)n=0, \quad n=v-v_{xx}.
\end{aligned}
\right.
\end{equation}
Note that this system reduces to the well-studied integrable Novikov equation \cite{HW, Nov}
\begin{equation}\label{NOV}
m_t+u^2m_x+3uu_{x}m=0, \quad m=u-u_{xx},
\end{equation}
when $v=u$.

Since the celebrated work \cite{CH} by Camassa and Holm, in which they first discovered the non-smooth peaked soliton solutions (called peakons) to the Camassa-Holm (CH) equation, the existence of peakons and multi-peakons is one of the significant properties of the integrable CH-type equations. The basic wave profile of a peakon takes a quite compact form
\begin{equation}\label{profile}
\varphi_c(x-ct)=a(c)\,e^{-|x-ct|},
\end{equation}
where $c\in \R$ is the wave speed and the amplitude $a(c)$ is a function related to $c$. Remarkably, the seemingly simple form of the peakon displays the deep relationship with some important phenomena of wave propagation in shallow water waves. Indeed, due to the discussion in \cite{cons, conesc, To}, the feature of the peakons that their profile \eqref{profile} is smooth, except at the crest where it is continuous but the lateral tangents differ, is similar to that of the so-called Stokes waves of greatest height, i.e. traveling waves of largest possible amplitude which are solutions to the governing equations for irrotational water waves. There is no closed forms available for these waves, and the peakons can capture these described essential features. It is well-understood that the CH equation
\begin{equation*}
m_t+um_x+2u_{x}m=0, \quad m=u-u_{xx},
\end{equation*}
and the Degasperis-Procesi (DP) equation
\begin{equation*}
m_t+um_x+3u_{x}m=0, \quad m=u-u_{xx},
\end{equation*}
both arise as the appropriate asymptotic approximations of the Euler equations for the free-surface shallow water waves in the moderately nonlinear regime \cite{CL}, and admit the following peakon solutions ($c>0$, and anti-peakon for $c<0$) in the line \cite{CH, CHH, CHT, DHK, Len1, LS}
\begin{equation}\label{CHpeakons}
u(t, x)=\varphi_c(x-ct)=ce^{-\left| x-ct\right|}, \qquad c\neq 0.
\end{equation}
On the other hand, the peakons and the corresponding multi-peakons admit a rich mathematical structure related to the underlying integrable features. Indeed, both the CH and DP equations are integrable equations with Lax-pair formulations, and the inverse scattering approach can be used to derive explicitly these peakons \eqref{CHpeakons} and the related multi-peakon solutions \cite{BSS, ET, LS-0}. In the past ten years, two typical CH-type integrable equations with cubic nonlinearity that support peakon dynamics attracted much attention. One is the Novikov equation \eqref{NOV}, whose peakon solutions take the form of \cite{HW}
\begin{equation*}
u(t, x)=\varphi_c(x-ct)=\sqrt{c}e^{-\left| x-ct\right|}, \qquad c> 0.
\end{equation*}
Another one is the modified Camassa-Holm (mCH) equation \cite{OR}
\begin{equation*}
m_t+\left( (u^2-u_x^2)m\right)_x=0, \quad m=u-u_{xx},
\end{equation*}
which has the following peakon structure \cite{gloq}
\begin{equation*}
u(t, x)=\varphi_c(x-ct)=\sqrt{\frac{3c}{2}}\,e^{-\left| x-ct\right|}, \qquad c> 0.
\end{equation*}
Although the physical background of the Novikov and mCH equations is not so clear, their peakon and multi-peakon dynamics are demonstrated to have several non-trivial properties in the framework of Lax integrability (see the discussion in \cite{CSz, HLS}, etc).

In this paper, special concern for these peakon solutions is the issue of their stability, which lies in the fact that they are the explicit weak solutions in the sense of distribution to the corresponding equations. The peakon equations exhibit different features in contrast with the classical integrable equations such as the KdV equation, the modified KdV equation and the Schr\"{o}dinger equation that admit the smooth solitions, especially in the study of qualitative properties related to stability and instability. There are huge number of papers to study the stability and instability of solitons for classical integrable systems. We don't attempt to exhaust all the literatures, one can refer to \cite{BEN, BON, CaLi, GSS-1, GSS-2, pava} for stability issue and \cite{MM, MMT,  PW} for the issue of asymptotic stability, as well as the references therein. Note that the peakons don't admit the classical second-order derivatives and the linearized operators at peakons appear to be degenerate. So the classical methods based on the spectral analysis are not available in the case of peakons. In an intriguing paper due to Constantin and Strauss \cite{CS}, they proved the orbital stability of peakons for the CH equation by discovering several precise optimal inequalities relating to the maximum value of the approximate solutions and the conserved quantity of quadratic form and higher-order conserved quantity (see also \cite{CM} for a variational argument and \cite{Len} for the periodic case). This approach in \cite{CS} was further developed by Dika and Molinet \cite{DM} to study orbital stability of the train of peakons to the CH equation. Orbital stability of peakons and the train of peakons of other CH-type integrable equations were investigated in \cite{Kab, LL, LLQ-1, LLQ, LLOQ, QLL}. More recently, the issue of instability of peakons for the CH equation and the Novikov equation are addressed in \cite{NP} and \cite{CP}, respectively.

Compared with the rich results of the stability or instability properties of peakons for the CH-type integrable equations of scalar form, the corresponding work for the multi-component peakon profiles is quite a few. It is worth noticing that multi-component CH-type integrable systems, that are verified to admit the peakon structures as the distributional solutions, are very rare up to now. A celebrated integrable multi-component extension to the CH equation is the so-called two-component CH system \cite{CLZ, CI, OR}
\begin{eqnarray}\label{tcCH}
\left\{
\begin{aligned}
&\; m_t+um_x+2u_xm+\rho\rho_x=0, \quad m=u-u_{xx},\\
&\rho_t+(\rho u)_x=0.
\end{aligned}
\right.
\end{eqnarray}
However, system \eqref{tcCH} does not admit the peaked solitons \cite{HNT}. The orbital stability of a kind of reduced peakon profile was studied via variational method \cite{CLLQ}. The Novikov equation \eqref{NOV} admits the following two-component integrable extension, called the Geng-Xue system \cite{GX},
\begin{equation}\label{GX}
\left\{
\begin{aligned}
&m_t+uvm_x+3vu_xm=0,\quad m=u-u_{xx},\\
&n_t+uvn_x+3uv_xn=0, \quad n=v-v_{xx},
\end{aligned}
\right.
\end{equation}
which has been paid much attention recently \cite{LiLiu, LS-1, LS-2}. Although system \eqref{GX} supports the multi-peakon structures, they are derived in the framework of Lax-pair formulation of \eqref{GX} (see \cite{LS-2}), which are not weak solutions in the sense of distribution. To the best of our  knowledge, there is no work to study orbital stability of multi-component peakons for integrable multi-component CH-type equations.

Recently, another kind of two-component integrable generalization \eqref{tcNK} of the Novikov equation \eqref{NOV} was introduced in \cite{Li} and we find that this system admits the two-component peakon structure, which are given by
\begin{equation}\label{solitons}
\big(u(t, x), v(t, x)\big)=\big(\varphi_c(x-ct), \psi_c(x-ct)\big)=\big(a\varphi(x-ct), b\psi(x-ct)\big)=\big(ae^{-\left| x-ct\right|}, be^{-\left| x-ct\right|}\big),
\end{equation}
traveling at constant speed $c=ab\neq 0$. It is demonstrated that these peakons \eqref{solitons} are indeed the weak solutions of \eqref{tcNK} in the distribution form
\begin{equation}\label{weakform}
\left\{
\begin{aligned}
&u_t+uvu_x+P_x*\LC \frac{1}{2}u_x^2v+uu_xv_x+u^2v\RC+\frac 12 P*(u_x^2v_x)=0,\\
&v_t+uvv_x+P_x*\LC \frac{1}{2}v_x^2u+vv_xu_x+v^2u\RC+\frac 12 P*(v_x^2u_x)=0,
 \end{aligned}
\right.
\end{equation}
where $P(x)=e^{-\left| x\right|}/2$ and $*$ stands for convolution with respect to the spatial variable $x\in \mathbb R$. Here a question arises: are these multi-component peakons and the corresponding train-profiles for system \eqref{tcNK} stable in the energy space?

System \eqref{tcNK} adapts the following conserved densities
\begin{equation*}
E_{0}[u,v]=\int_{\mathbb R}(mn)^{\frac 13}\,dx,
\end{equation*}
\begin{equation*}
E_{u}[u]=\int_{\mathbb{R}}\LC u^2+u_x^2\RC \, dx, \quad E_{v}[v]=\int_{\mathbb{R}}\LC v^2+v_x^2\RC \, dx, \quad H[u,v]=\int_{\mathbb{R}}\big(uv+u_xv_x\big) \, dx
\end{equation*}
and
\begin{equation*}
F[u,v]=\int_{\mathbb{R}}\LC u^2v^2+\frac{1}{3}u^2v_x^2+\frac{1}{3}v^2u_x^2+\frac{4}{3}uvu_xv_x-\frac{1}{3}u_x^2v_x^2\RC \, dx,
\end{equation*}
which will play prominent role in proving stability of peakons, while the corresponding three conserved quantities of Novikov equation \eqref{NOV} are
\begin{equation*}
H_0[u]=\int_{\mathbb R}m^{\frac 23}\,dx, \;\; E[u]=\int_{\mathbb{R}}{(u^2+u_x^2)} \, dx,\;\;
F[u]=\int_{\mathbb{R}}{\left (u^4+2u^2u_x^2-\frac{1}{3}u_x^4 \right )}\, dx.
\end{equation*}
If we choose $u=v$, then we have $H_0=E_0[u,u]$, $E[u]=E_{u}[u]=E_{v}[v]=H[u,v]$ and $F[u,v]=F[u]$. Due to the existence of the $H^1$ conservation law respectively for the $u$ and $v$-components as well as the mutual interaction conservation laws $H[u, v]$, it is expected to prove stability for the two-component Novikov system in the sense of the energy space of $H^{1}\times H^{1}$-norm. In general, a small perturbation of a solitary wave can yield another one with a different speed and phase shift. It is appropriate to define the orbit of traveling-wave solutions $(\varphi_{c}, \, \psi_{c})$ to be the set $U(\varphi, \psi)=\{(a\varphi(\cdot+x_{1}), b\psi(\cdot+x_{2})), x_{1} \in \mathbb{R}, x_{2} \in \mathbb{R}\}$. However, if $x_{1}\neq x_{2}$, the functionals $F[\varphi_{c}, \, \psi_{c}]$ and $H[\varphi_{c}, \, \psi_{c}]$ are not conserved in the time evolution for $(\varphi_{c}, \, \psi_{c})$ in this set $U(\varphi, \psi)$. Thus, we consider here a suitable orbit of the traveling-wave solutions $\varphi_{c}$ and $\psi_{c}$ to be the set $U_0(\varphi, \psi)=\{(a\varphi(\cdot+x_{0}), b\psi(\cdot+x_{0})), x_{0} \in \mathbb{R}\}$ and the peakon solutions of the two-component Novikov equation are called orbitally stable if a wave standing close to the peakon remains close to the orbit $U_0(\varphi, \psi)$ at all the later existence time.

The first main result is stated as follows. Here, we only consider the case of peakons traveling to the right, i.e. the case of $a>0$, $b>0$ and then $c=ab>0$ in \eqref{solitons}.

\begin{theorem}\label{thm1.1}
\, Let $(\varphi_c, \, \psi_c)$ be the peaked solitons in \eqref{solitons}, traveling with speed $c=ab>0$. Then $(\varphi_c, \, \psi_c)$ are orbitally stable in the following sense. Assume that $u_0,v_0 \in H^s(\mathbb{R})$ for some $s \geq 3$, $(1-\partial_x^2)u_0(x)$ and $(1-\partial_x^2)v_0(x)$ are nonnegative, and there is a $\delta>0$ such that
\begin{align*}
\left \| (u_0,v_0)-(\varphi_c,\psi_c) \right \|_{H^1(\mathbb{R}) \times H^1(\mathbb{R})}\leq \left \| u_0-\varphi_c \right \|_{H^1(\mathbb{R})} +\left \| v_0-\psi_c \right \|_{H^1(\mathbb{R})} < \delta.
\end{align*}
Then the corresponding solution $(u(t, x), v(t, x))$ of the Cauchy problem for the two-component Novikov system \eqref{tcNK} with the initial data $u(0, x)=u_0(x)$ and $v(0, x)=v_0(x)$ satisfies
\begin{eqnarray*}
\begin{aligned}
&\sup_{t \in [0,T)}\left \| \big{(}u(t, \cdot), v(t, \cdot)\big{)}-\big{(}\varphi_c(\cdot-\xi(t)), \psi_c(\cdot-\xi(t))\big{)} \right \|_{H^1(\mathbb{R}) \times H^1(\mathbb{R})}\\
&\leq \sup_{t \in [0,T)}{\left \| u(t, \cdot)-\varphi_c(\cdot-\xi(t)) \right \|_{H^1(\mathbb{R})}+\left \| v(t, \cdot)-\psi_c(\cdot-\xi(t)) \right \|_{H^1(\mathbb{R})}} < A\delta^\frac14,
\end{aligned}
\end{eqnarray*}
where $T>0$ is the maximal existence time, $\xi(t) \in \mathbb{R}$ is the maximum point of the function $u(t, x)v(t, x)$, the constant $A$ depends only on $a$, $b$ as well as the norms $\left \| u_0 \right \|_{H^s(\mathbb{R})}$ and $\left \| v_0 \right \|_{H^s(\mathbb{R})}$.
\end{theorem}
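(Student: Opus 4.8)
The plan is to transplant the Constantin--Strauss scheme to the coupled system, handling both components at once through the four conserved quantities $E_u,E_v,H,F$. I would begin with the a priori structure forced by the hypotheses: since $s\ge 3$ and $m_0=(1-\partial_x^2)u_0\ge 0$, $n_0=(1-\partial_x^2)v_0\ge 0$, local well-posedness together with persistence of the signs of $m$ and $n$ yields, on the whole interval $[0,T)$, the pointwise relations $u,v>0$, $|u_x|\le u$, $|v_x|\le v$ and uniform $H^1\cap L^\infty$ bounds on $(u,u_x)$ and $(v,v_x)$ depending only on $a,b,\|u_0\|_{H^s},\|v_0\|_{H^s}$. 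These bounds render $F$ (whose quartic term $\int u_x^2v_x^2$ is not $H^1$-continuous in general) finite and Lipschitz on the admissible class, so conservation plus the initial closeness keep $E_u,E_v,H,F$ within $O(\delta)$ of their peakon values $E_u[\varphi_c]=2a^2$, $E_v[\psi_c]=2b^2$, $H[\varphi_c,\psi_c]=2ab=2c$, $F[\varphi_c,\psi_c]=\tfrac43 c^2$ for all $t$. Using $(1-\partial_x^2)\bigl(ae^{-|x-\xi|}\bigr)=2a\,\delta_\xi$, the key reduction is the exact expansion $\|u-\varphi_c(\cdot-\xi)\|_{H^1}^2=E_u[u]-4a\,u(\xi)+2a^2$ together with its analogue for $v$; this reduces the theorem to showing that the crest heights $p:=u(\xi)$ and $q:=v(\xi)$ lie within $O(\delta^{1/2})$ of $a$ and $b$.

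The mechanism for pinning the heights is a family of optimal integral identities and inequalities obtained from auxiliary functions split at $\xi$. With $M:=pq=u(\xi)v(\xi)$ and $g:=u\mp u_x$, $h:=v\mp v_x$ (upper sign for $x<\xi$, lower sign for $x>\xi$), integration by parts and the decay at infinity give the clean identities $\int g^2=E_u[u]-2p^2\ge 0$, $\int h^2=E_v[v]-2q^2\ge 0$, $\int gh=H[u,v]-2M$, and $\int uv\,gh=\int(u^2v^2+uvu_xv_x)-M^2$; here $g,h\ge 0$ pointwise because $u\ge|u_x|$ and $v\ge|v_x|$, and $uv\le M$ globally because $\xi$ is the maximum point of the product. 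The central estimate I would establish is the sharp coupled inequality
\[
F[u,v]\le \tfrac43\,M\bigl(H[u,v]-M\bigr),
\]
which is saturated by the peakon profile (there $M=c$, $H=2c$, and both sides equal $\tfrac43 c^2$) and which, combined with $\int g^2,\int h^2\ge 0$, performs in the coupled setting the role of the scalar Novikov inequalities.

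To conclude I would argue as follows. The first two identities give the upper bounds $p^2\le\tfrac12 E_u[u]=a^2+O(\delta)$ and $q^2\le\tfrac12 E_v[v]=b^2+O(\delta)$, hence $p\le a+O(\delta)$, $q\le b+O(\delta)$. Reading the central inequality as $M^2-H M+\tfrac34 F\le 0$, its discriminant $H^2-3F$ is nonnegative and equals $O(\delta)$ because it vanishes exactly at the peakon; therefore $M$ is trapped within $O(\delta^{1/2})$ of $H/2=c+O(\delta)$, so that $M=pq=c+O(\delta^{1/2})$. Feeding the lower bound $pq\ge c-O(\delta^{1/2})$ into the upper bounds on $p$ and $q$ produces matching lower bounds $p\ge a-O(\delta^{1/2})$ and $q\ge b-O(\delta^{1/2})$, whence $|p-a|,|q-b|=O(\delta^{1/2})$. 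Substituting back, $\|u-\varphi_c(\cdot-\xi)\|_{H^1}^2=4a(a-p)+O(\delta)=O(\delta^{1/2})$ and likewise for $v$; summing and taking square roots yields the stated bound $A\delta^{1/4}$, with $A$ depending only on $a,b$ and the a priori constants.

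The \emph{main obstacle} is the sharp inequality $F\le\tfrac43 M(H-M)$. In contrast with the scalar case, only the single combination $\int(u^2v^2+uvu_xv_x)$ telescopes cleanly against the weight $uv$; the remaining quartic pieces $\int u^2v_x^2$, $\int v^2u_x^2$ and the sign-indefinite $-\tfrac13\int u_x^2v_x^2$ do not, precisely because of the genuine nonlinear interaction between the two components. My plan for this step is to substitute $u_x=\pm(g-u)$, $v_x=\pm(h-v)$ to rewrite $F$ as the integral of a quartic form in $(u,v,g,h)$, to extract its $M$-dependence through the four boundary identities above (notably writing $F=\int uv\,gh+M^2+\tfrac13(I_2+I_3+I_4-I_5)$ and grouping $I_2+I_3+2I_4=\int\bigl((uv)_x\bigr)^2$), and then to prove nonnegativity of the resulting defect $\int(\tfrac43 M-uv)\,gh+\tfrac13\bigl[M^2-(I_2+I_3+I_4-I_5)\bigr]$ by exploiting $gh\ge 0$, $uv\le M$, $u\ge|u_x|$ and $v\ge|v_x|$. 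A secondary, genuinely two-component difficulty is that $\xi$ is the maximum point of the product $uv$ rather than of either factor, so the two heights cannot be controlled independently and need not satisfy $u\le p$ or $v\le q$; this is why the argument must control $M=pq$ first and only then transfer the estimate to each of $p$ and $q$.
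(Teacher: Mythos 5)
Your skeleton coincides with the paper's: the expansion $\|u-\varphi_c(\cdot-\xi)\|_{H^1}^2=E_u[u]-4a\,u(\xi)+2a^2$, the split functions $g=u\mp u_x$, $h=v\mp v_x$ at the maximum point $\xi$ of $uv$, the central inequality $F\le\frac43M(H-M)$, the quadratic-polynomial trapping of $M$ near $c=ab$, the upper bounds on $u(\xi),v(\xi)$ from $E_u,E_v$, and the transfer from $M=u(\xi)v(\xi)$ to the two heights are exactly the paper's Lemmas 3.1--3.6 and final argument. The problem is the step you yourself single out as the main obstacle: the inequality $F[u,v]\le\frac43M(H[u,v]-M)$ is never proven, and the plan you sketch for it cannot be completed. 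Writing $I_2=\int u^2v_x^2$, $I_3=\int v^2u_x^2$, $I_4=\int uvu_xv_x$, $I_5=\int u_x^2v_x^2$, your identity
\begin{equation*}
\frac43M\bigl(H-M\bigr)-F=\int_{\mathbb R}\Bigl(\frac43M-uv\Bigr)gh\,dx+\frac13\Bigl[M^2-\bigl(I_2+I_3+I_4-I_5\bigr)\Bigr]
\end{equation*}
is correct, and the first integral is nonnegative, but the bracket can be \emph{strictly negative} for admissible data, so the pointwise facts $gh\ge0$, $uv\le M$, $|u_x|\le u$, $|v_x|\le v$ cannot handle it. Take $u=v=e^{-|x+L|}+e^{-|x-L|}$ with $L$ large (a sum of two peakons: $m=n=2\delta_{-L}+2\delta_{L}\ge0$, and it mollifies into $H^s$, $s\ge3$, keeping $m,n\ge0$). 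Then $u_x^2=u^2-4e^{-2L}\mathbf{1}_{(-L,L)}$, hence $I_2+I_3+I_4-I_5=2\int u^4+O(Le^{-2L})\approx 2$, while $M^2=(\max u^2)^2\approx 1$; the bracket is $\approx -1$. The defect is still nonnegative overall, but only because the first integral (supported near the second bump, where $gh>0$ and $\frac43M-uv\ge\frac13M$) compensates, and producing that compensation in general is precisely the hard point on which your sketch is silent.

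The paper's device, which your decomposition misses, is to keep the sign-indefinite terms \emph{inside} the weight multiplying $gh$ instead of splitting them off. It introduces (Lemma 3.3)
\begin{equation*}
h_\ast(x)=
\begin{cases}
uv-\frac13(uv)_x-\frac13u_xv_x, & x<\xi,\\
uv+\frac13(uv)_x-\frac13u_xv_x, & x>\xi,
\end{cases}
\end{equation*}
proves the exact identity $\int_{\mathbb R}h_\ast\,gh\,dx=F[u,v]-\frac43M^2$, and then uses the factorization $h_\ast=\frac43uv-\frac13(u\pm u_x)(v\pm v_x)$, which with $|u_x|\le u$, $|v_x|\le v$ gives the pointwise bound $h_\ast\le\frac43uv\le\frac43M$. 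A single application of $gh\ge0$ then yields $F-\frac43M^2\le\frac43M\int gh=\frac43M(H-2M)$, i.e.\ your central inequality. In your version the pieces $\mp\frac13(uv)_x-\frac13u_xv_x$ are integrated out and separated from $gh$, which destroys the factorized structure that controls their sign. Everything before and after this step in your proposal is correct and matches the paper, but as written the pivotal inequality remains unproven and the proposed route to it fails.
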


To prove orbital stability of the two-component peakons $(\varphi_c, \, \psi_c)$, some new insights are developed. We aim to obtain for each component $u$ and $v$ the dynamical estimates $|u(t, \xi(t))-a|$ and $|v(t, \xi(t))-b|$ along some trajectory $t \mapsto \xi(t)$, where $a$ and $b$ are the maximal value of the component $\varphi_c$ and $\psi_c$, respectively. Here, due to the nonlinear interaction between $u$ and $v$ involved in system \eqref{tcNK}, the key obstacle is how to find the suitable location of $\xi(t)$ in order to derive the precise estimates for $|u(t, \xi(t))-a|$ and $|v(t, \xi(t))-b|$ (note that in the case of scalar peakons such $\xi(t)$ is always chosen to locate at the maximal point of perturbed solution, which is no longer valid in the multi-component case considered here and $\xi(t)$ must change according to the appearance of characteristic speed of nonlinear interaction $uv$). Moreover, the dynamical energy identities and energy inequalities should involve the nonlinear interaction of the two components. In addition, the conservation law $F[u,v]$ is much more complicated than $F[u]$ of the Novikov equation. Therefore, the stability issue of the two-component peakon solutions is more subtle. To overcome the difficulties, two observations will be crucial. System \eqref{tcNK} not only has the separated $H^1$ conserved quantities $\int (u^2+u_x^2) dx$ and $\int(v^2+v_x^2) dx$ with which one can derive the pointwise estimates separately for each component $u$ and $v$, but also the second-order interacting conserved quantity $H[u,v]=\int (uv+u_xv_x) dx$ with which we are motivated to find the exact location of $\xi(t)$ as the maximal point of the multiplication function $u(t, \cdot)v(t, \cdot)$ of two components. This argument is quite different from the case for the scalar CH-type equations. On the other hand, new optimal energy identities for $H[u, v]$ and $F[u, v]$ are established. The new insight is that the precise control of two components is involved in one optimal energy identity. This point is also different from the scalar case. Based on these observations together with corresponding refined analysis, we are able to prove orbital stability of the two-component peakons $(\varphi_c, \, \psi_c)$ on the line ${\mathbb R}$.

\begin{remark}
For the Geng-Xue system \eqref{GX}, even though the peakon solutions in the Lax-pair sense are considered, system \eqref{GX} does not admit sufficient conserved quantities to establish the corresponding estimates for the stability.
\end{remark}

For the issue of orbital stability of train-profiles of these two-component peakons, we have the following result.

\begin{theorem}\label{trainsstable}
Let be given $N$ velocities $c_{1},c_{2},\cdots, c_{N}$ such that $0<a_{1}<a_{2}<...<a_{N}$, $0<b_{1}<b_{2}<...<b_{N}$ and $c_{i}=a_{i}b_{i}$ for any $i\in \{1,...,N\}$. There exist $A>0$, $L_0>0$ and $\epsilon_{0}>0$ such that if the initial data $(u_0,v_0) \in H^s(\mathbb{R})\times H^s(\mathbb{R})$ for some $s \geq 3$ with $(1-\partial_x^2)u_0(x)$ and $(1-\partial_x^2)v_0(x)$ being nonnegative, satisfy
\begin{align}\label{initialdata-1}
{\left \| u_0- \sum_{i=1}^N{\varphi_c(\cdot - z_i^0)} \right \|_{H^1}}+{\left \| v_0- \sum_{i=1}^N{\psi_c(\cdot - z_i^0)} \right \|_{H^1}}\leq {\epsilon}
\end{align}
for some $0<\epsilon <\epsilon_0$ and $ z_i^0-z_{i-1}^0 \geq L$ with $L>L_{0}$, then there exist ${x}_1(t),...,{x}_N(t)$ such that the corresponding strong solution $(u(t, x), v(t, x))$ satisfies
\begin{align*}
{\left \| u(t, \cdot)- \sum_{i=1}^N{\varphi_c(\cdot - {x}_i(t))} \right \|_{H^1}}+{\left \| v(t, \cdot)- \sum_{i=1}^N{\psi_c(\cdot - {x}_i(t))} \right \|_{H^1}} \leq A\LC \epsilon^{ \frac{1}{4}}+L^{- \frac{1}{8}}\RC,
\end{align*}
$\forall t \in [0,T)$, where $x_{j}(t)-x_{j-1}(t)>L/2$.
\end{theorem}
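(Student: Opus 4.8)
The plan is to adapt the Martel--Merle--Tsai soliton-train strategy, realized for peakons by Dika--Molinet \cite{DM} in the CH case, to the coupled conserved functionals $E_u$, $E_v$, $H$ and $F$ of system \eqref{tcNK}. The backbone consists of three ingredients: a modulation/continuity (bootstrap) scheme, almost-monotonicity of localized energies across the gaps between the bumps, and the single-peakon optimal inequalities already secured in the proof of Theorem \ref{thm1.1}, now applied locally to each bump. First I would fix the a priori bootstrap assumption: on a maximal interval $[0,t_0)$ the strong solution stays within $H^1\times H^1$-distance $K(\epsilon^{1/4}+L^{-1/8})$ of the train $\sum_i(\varphi_c(\cdot-x_i),\psi_c(\cdot-x_i))$, with consecutive centers separated by more than $L/2$. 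Under this assumption I would define the moving centers $x_1(t)<\cdots<x_N(t)$ by a modulation argument, taking each $x_i(t)$ to be the local maximum of the product $u(t,\cdot)v(t,\cdot)$ in a neighborhood of the $i$-th bump; this is the correct multi-component analogue of the point $\xi(t)$ in Theorem \ref{thm1.1}, and the implicit function theorem makes these points well-defined and $C^1$ as long as the bumps remain separated.

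The heart of the argument is the monotonicity step. I would introduce smooth cut-offs interpolating between $0$ and $1$ across each gap, transported at intermediate speeds strictly between $c_{i-1}$ and $c_i$, and study the localized functionals $\int(u^2+u_x^2)\phi_i\,dx$, $\int(v^2+v_x^2)\phi_i\,dx$ and $\int(uv+u_xv_x)\phi_i\,dx$. Differentiating in time using the distributional formulation \eqref{weakform}, I would show that the resulting flux terms---which now contain the genuine interaction $uv$ together with the coupled quadratic expressions appearing in $H$ and $F$---are almost sign-definite on the transition regions where the cut-off derivative is supported, up to errors exponentially small in the separation $L$. The speed ordering $c_1<\cdots<c_N$, coming from $0<a_1<\cdots<a_N$ and $0<b_1<\cdots<b_N$, is exactly what forces the transport speeds to separate the bumps and gives the flux the favorable sign, so that the energy carried to the right of each moving transition point is almost non-increasing.

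With the monotonicity in hand I would decompose the global conserved quantities $E_u,E_v,H,F$ into localized pieces centered at the $x_i(t)$, each of which is conserved up to the monotonicity errors, i.e. up to $O(L^{-1/8})$ after the usual manipulations. On each localized piece I would reproduce the optimal inequalities from the proof of Theorem \ref{thm1.1}: controlling the local versions of $H$ and $F$ and the pointwise maxima yields the dynamical bounds $|u(t,x_i(t))-a_i|$ and $|v(t,x_i(t))-b_i|$, hence the local $H^1$ distance of each bump to its peakon, by $O(\epsilon^{1/4}+L^{-1/8})$; here the $\epsilon^{1/4}$ is inherited from the cubic structure of the Novikov conservation laws as in Theorem \ref{thm1.1}, while the $L^{-1/8}$ records the localization error. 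I would organize this by induction on $N$: the monotonicity lets one peel off the rightmost (fastest) bump, control it by the localized single-peakon estimate, and then apply the induction hypothesis to the remaining $N-1$ bumps on the complementary region. Summing the $N$ local estimates and feeding the result back, one sees that for $\epsilon$ and $L^{-1}$ sufficiently small the derived bound strictly improves the bootstrap constant $K$ and keeps the centers separated by more than $L/2$; a standard continuity argument then shows $t_0=T$ and propagates the estimate to all $t\in[0,T)$.

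The step I expect to be the main obstacle is establishing the almost-monotonicity for the coupled functionals. In the scalar case the local energy flux has a transparent sign, but here the flux mixes the interaction characteristic speed $uv$, the cross-energy density $uv+u_xv_x$, and the intricate density defining $F$; extracting a definite sign while keeping all cross terms exponentially small in $L$ is the technical core. A secondary difficulty, tied to the same phenomenon, is that the modulation points $x_i(t)$ must be taken as maxima of the product $uv$ rather than of either single component, so the choice of cut-off speeds and the monotonicity computation must be made compatible with the interaction speed $uv$ localized near each $c_i$.
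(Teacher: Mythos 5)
Your overall architecture (bootstrap/continuity reduction, modulation, almost-monotonicity of localized energies across the gaps, localized single-peakon inequalities, downward induction starting from the fastest bump) is the same as the paper's. However, there is a concrete gap in your modulation step. You propose to define the moving centers $x_i(t)$ as local maxima of the product $u(t,\cdot)v(t,\cdot)$ and to invoke the implicit function theorem to make them ``well-defined and $C^1$.'' That step fails as stated: applying the IFT to $\partial_x(uv)(t,x)=0$ requires nondegeneracy $(uv)_{xx}\neq 0$ at the critical point, and no such control is available --- the comparison profile $\varphi_{c_i}\psi_{c_i}=c_i e^{-2|\cdot-x_i|}$ has a corner at its crest, $H^1$-closeness gives no pointwise control of second derivatives, and the perturbed maximum may be degenerate or non-unique. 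Nor can you differentiate a maximum condition in time to get the speed estimate $\dot{x}_i\approx c_i$, which you need so that the bumps stay away from the transition regions of your cut-offs. The paper's Proposition \ref{implicitfunction} avoids this by running the IFT on the orthogonality conditions \eqref{orthogonality}, i.e.\ on the projections of $(u,v)-(R_{\tilde{X}},S_{\tilde{X}})$ onto $(\partial_x\varphi_{c_i},\partial_x\psi_{c_i})$; this produces genuinely $C^1$ parameters $\tilde{x}_i(t)$ with $\dot{\tilde{x}}_i=c_i+O(\sqrt{\alpha})+O(L^{-1})$, while the maxima $x_i(t)$ of $uv$ on $J_i$ are kept as separate, possibly non-unique and non-smooth points, shown only a posteriori to satisfy $|x_i-\tilde{x}_i|\le L/12$. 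Your proposal conflates these two families of points: the localized optimal inequalities need the max points (as in Theorem \ref{thm1.1}), but the moving weights and the speed control need the orthogonality-modulated ones.

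The monotonicity step that you flag as the main obstacle is indeed the technical core, but the paper does not extract the favorable sign from the speed ordering alone; the decisive mechanism is positivity. By the sign invariance of Lemma \ref{nonnegative} (from $m_0,n_0\geq 0$) one has $u,v\geq 0$, $|u_x|\le u$ and $|v_x|\le v$ for all time, which makes all localized densities nonnegative and, crucially, yields the pointwise inequality $|uv_x+u_xv|\le uv+u_xv_x$ (inequality \eqref{oneESI}); this is what tames the sign-indefinite term $u_xv_x$ in $\mathcal{J}^{u,v}_{j,K}$ when the Virial-type identity of Lemma \ref{virial} is estimated, and it also underlies the localized version $\frac{4}{3}M_i^2-\frac{4}{3}M_iH_i+F_i\le O(L^{-1/2})$ of the key inequality. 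Without invoking this structure, your claim that the coupled flux is ``almost sign-definite'' has no proof; with it, the computation closes exactly as in Proposition \ref{monotonicity}. So: same strategy, but two essential mechanisms --- orthogonality-based modulation and positivity-based control of the cross terms --- are respectively misassigned and left unresolved in your write-up.
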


In general, two main ingredients in the proof of orbital stability for the train-proflies of peakons are involved \cite{DM,LLQ,MMT}. One is orbital stability of the single peakons, and the another one is the property of almost monotonicity of the local energy on the right hand side of the peakons. For the two-component peakons, more difficulties come from the interaction of the two components $u(t, x)$ and $v(t, x)$. The first one is how to establish inequalities among the localized conserved quantities to verify the orbital stability of the two-component peakons separately. For the train-profiles of peakons, we need to apply $N$ inequalities to control $2N$ estimates. The second one is to establish monotonicity result of the functionals $\mathcal{J}^{u,v}_{j,k}(t)$ since we can not identify the sign for the term $u_xv_x$ in the conserved density $H[u,v]$. To overcome the difficulties, we use
the conserved densities $E_u[u]$, $E_v[v]$, $H[u,v]$ and $F[u,v]$, and establish the delicate inequalities relating to the conserved densities and the maximal value of the two components $u(t,x)$ and $v(t,x)$. And in the case of train-profile of two-component peakons, we apply the proof of the single peakons, the modulation theory, the accurate estimates on the conserved densities $E_u[u]$, $E_v[v]$, $H[u, v]$ and $F[u,v]$ and the induction method to obtain the desired result.

The remainder of the paper is organized as follows. In Section 2, we provide a brief discussion on the integrability, conservation laws, the sign invariant property of $m(t, x)$ and $n(t, x)$ of system \eqref{tcNK},  and local well-posedness result on Cauchy problem of system \eqref{tcNK}. In Section 3, we prove orbital stability of single peakons on the line. Finally in Section 4, we verify orbital stability of the train of peakons.

\section{Preliminaries}

In the present section, the issue of well-posedness is discussed. First of all, we call that functions $(u,v)\in C([0,T);H^1(\mathbb R))\times C([0,T);H^1(\mathbb R))$ is a solution of system \eqref{tcNK}, if $(u,v)$ is a solution of \eqref{weakform} in the sense of distributions and $E_{u}[u]$, $E_{v}[v]$, $H[u, v]$ and $F[u, v]$ are conserved quantities.

Consider the following Cauchy problem of system \eqref{tcNK} in the whole line $\R$
\begin{equation}\label{tcNK-1}
\left\{
\begin{aligned}
&m_t+uvm_x+(2vu_x+uv_x)m=0,\quad m=u-u_{xx},\\
&n_t+uvn_x+(2uv_x+vu_x)n=0, \quad n=v-v_{xx}, \quad t>0, \; x\in \R,\\
&u(0,x)=u_0(x), \;\;v(0,x)=v_0(x), \quad x\in \R.
\end{aligned}
\right.
\end{equation}

First, similar to the results for the two-component CH system given in \cite{FQ}, one can establish the following local well-posedness result.
\begin{theorem}\label{th-2.1}
Given $z_0=(u_0, v_0)^T\in H^s(\R)\times H^s(\R) (s>3/2)$, there exists a maximal $T=T(\| z_0\|_{H^s(\R)\times H^s(\R)})>0$ and a unique strong solution $z=(u, v)^T$ to \eqref{tcNK-1} such that
\begin{equation*}
z=z(\cdot, z_0)\in C([0, T);H^s(\R)\times H^s(\R))\cap C^1([0,T); H^{s-1}(\R)\times H^{s-1}(\R)).
\end{equation*}
In addition, the solution depends continuously on the initial data, i.e. the mapping
\begin{equation*}
z_0\rightarrow z(\cdot, z_0): H^s(\R)\times H^s(\R) \rightarrow C([0, T); H^s(\R)\times H^s(\R))\cap C^1([0, T); H^{s-1}(\R)\times H^{s-1}(\R))
\end{equation*}
is continuous. Furthermore, the quantities $E_{u}[u]$, $E_{v}[v]$, $H[u, v]$ and $F[u, v]$ are all conserved along the solution $z=(u, v)^T$.
\end{theorem}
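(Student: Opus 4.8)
The plan is to follow Kato's abstract theory for quasilinear evolution equations, as indicated by the reference to the two-component Camassa--Holm system, adapting it to the genuinely coupled cubic structure of \eqref{tcNK-1}. First I would recast the problem as a closed nonlocal evolution equation for the pair $z=(u,v)^T$ itself rather than for $(m,n)$. Since $m=(1-\partial_x^2)u$ and $P(x)=\tfrac12e^{-|x|}$ is the Green's function of $1-\partial_x^2$, applying $(1-\partial_x^2)^{-1}=P*\,\cdot$ to the two transport equations for $m$ and $n$ and integrating the derivative terms by parts yields exactly the weak form \eqref{weakform}. Writing $\Lambda=(1-\partial_x^2)^{1/2}$, this takes the abstract form
\begin{equation*}
\frac{dz}{dt}+A(z)\,z=f(z),\qquad z(0)=z_0,
\end{equation*}
where $A(z)=\mathrm{diag}(uv\,\partial_x,\,uv\,\partial_x)$ is the common principal transport part and $f(z)$ collects the lower-order nonlocal terms $P_x*(\cdots)$ and $P*(\cdots)$, each a quadratic or cubic expression in $u,v$ and their first derivatives.

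Next I would verify Kato's four hypotheses with $X=L^2\times L^2$, $Y=H^s\times H^s$ and isomorphism $Q=\Lambda^s$ (diagonal). (i) For fixed $z$ with $uv\in H^{s-1}\hookrightarrow W^{1,\infty}$ (valid since $s>3/2$), the operator $A(z)=uv\,\partial_x$ is quasi-$m$-accretive on $L^2$, with accretivity constant controlled by $\|(uv)_x\|_{L^\infty}\lesssim\|z\|_Y^2$. (ii) The map $z\mapsto A(z)$ is Lipschitz from $Y$ into $\mathcal{L}(Y,X)$ because $uv$ is a continuous quadratic map on the Banach algebra $H^{s-1}$. (iii) The crucial condition is that $B(z):=QA(z)Q^{-1}-A(z)=[\Lambda^s,uv]\partial_x\Lambda^{-s}\in\mathcal{L}(X)$, which follows from the Kato--Ponce commutator estimate $\|[\Lambda^s,uv]g\|_{L^2}\lesssim\|uv\|_{H^s}\|g\|_{H^{s-1}}$, together with the Lipschitz dependence of $z\mapsto B(z)$ on bounded subsets of $Y$. (iv) The forcing $f$ is bounded and Lipschitz on bounded subsets of $Y$, again because $H^{s-1}$ is an algebra and convolution with $P$ and $P_x$ gains one derivative. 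Kato's theorem then delivers the unique solution $z\in C([0,T);Y)\cap C^1([0,T);H^{s-1}\times H^{s-1})$ and the continuous-dependence statement on a maximal interval $[0,T)$ with $T$ depending only on $\|z_0\|_Y$.

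For the conservation laws I would first take $z_0$ smooth enough that $(u,v)$ and $(m,n)$ are classical, differentiate each functional in $t$, substitute the equations and integrate by parts. For example, using $\int u_t\,m\,dx=\int u\,m_t\,dx$, one gets $\tfrac{d}{dt}E_u[u]=2\int u_t\,m\,dx$, which vanishes after substituting the first equation of \eqref{tcNK-1} and integrating by parts; the analogues for $E_v[v]$ and $H[u,v]$ are similar, while $F[u,v]$ is the heaviest computation owing to its many coupled quartic terms. The conservation then extends to all $s>3/2$ by approximating $z_0$ in $H^s\times H^s$ by smooth data and passing to the limit via the continuous dependence just established.

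I expect the main obstacle to lie in step (iii) combined with the verification for $F[u,v]$. The commutator and energy estimates must be carried out for the genuinely coupled quadratic symbol $uv$ and balanced against the cubic nonlocal forcing so that both components are controlled simultaneously and the constants remain uniform under the coupling; and re-establishing the conservation of the intricate functional $F[u,v]$ by a direct, necessarily lengthy, integration-by-parts computation is where the real technical work concentrates.
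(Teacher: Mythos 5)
Your proposal is correct and is essentially the paper's own route: the paper gives no proof at all, simply asserting that the result follows "similar to the results for the two-component CH system given in \cite{FQ}," and that reference's argument is exactly the Kato quasilinear scheme you outline (transport operator $A(z)=uv\,\partial_x$, commutator estimate for $[\Lambda^s,uv]\partial_x\Lambda^{-s}$, nonlocal forcing gaining derivatives through $P*$ and $P_x*$, conservation laws by direct computation for smooth data plus density). One small slip worth fixing: the Lipschitz bound on the transport coefficient should come from $uv\in H^s\hookrightarrow W^{1,\infty}$ (valid for $s>3/2$ since $H^s$ is an algebra), not from $H^{s-1}\hookrightarrow W^{1,\infty}$, which fails unless $s>5/2$.
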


Consider the flow governed by $(uv)(t, x)$
\begin{equation}\label{firstflow}
\left\{
\begin{aligned}
&\frac{d}{dt}q(t, x)=(uv)(t, q(t, x)),\quad x\in \R, \quad t\in [0, T),\\
&q(0, x)=x,\quad x\in \R.
\end{aligned}
\right.
\end{equation}
Just as the case for the Novikov equation \eqref{NOV}, the following lemma can be proved.
\begin{lemma}\label{Flow}
Assume that $(u_{0},v_0)\in H^s(\mathbb R)\times H^s(\mathbb R)$ with $s>3/2$ and $T>0$ be the maximal existence time of the corresponding strong solution $(u, v)$ to the Cauchy problem of system \eqref{tcNK-1}. Then the problem \eqref{firstflow} has a unique solution $q(t, x)\in C^1([0, T)\times \R)$. Furthermore, the map $q(t,\cdot)$ is an increasing diffeomorphism over $\R$ with
\begin{eqnarray}\label{diffflow}
\begin{aligned}
q_{x}(t,x)=exp\left(\int^t_{0}(uv)_x(s,q(s,x))ds\right),\quad (t,x)\in [0,T)\times \mathbb R.
\end{aligned}
\end{eqnarray}
\end{lemma}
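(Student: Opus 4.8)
The plan is to treat this as a direct application of the Cauchy--Lipschitz (Picard--Lindel\"of) theory for the non-autonomous ordinary differential equation $\dot q = (uv)(t,q)$, with the regularity of the transport speed supplied by Sobolev embedding applied to the strong solution furnished by Theorem \ref{th-2.1}. Since the statement already advertises that the argument runs ``just as the case for the Novikov equation,'' the task is to carry over that reasoning with $u^2$ replaced by the interaction speed $uv$.

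First I would record the regularity of the speed $F(t,x):=(uv)(t,x)$. Because $(u,v)\in C([0,T);H^s(\R)\times H^s(\R))$ with $s>3/2$, the embedding $H^s(\R)\hookrightarrow C^1_b(\R)$ shows that $u(t,\cdot), v(t,\cdot)$ and their first derivatives are bounded and continuous; moreover, since $t\mapsto(u,v)$ is continuous into $H^s$ and hence into $C^1_b$, for each $T'<T$ the quantity $\sup_{[0,T']}(\|u\|_{H^s}+\|v\|_{H^s})$ is finite by compactness. Consequently $F$ and its spatial derivative $F_x=(uv)_x=u_xv+uv_x$ are bounded and jointly continuous on $[0,T')\times\R$, and $F$ is Lipschitz in $x$ uniformly on such strips with constant $\|(uv)_x\|_{L^\infty}$.

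With this in hand, existence and uniqueness of $q(\cdot,x)\in C^1$ follow from Cauchy--Lipschitz: local existence and uniqueness hold because $F$ is continuous in $(t,x)$ and uniformly Lipschitz in $x$, and global existence on all of $[0,T)$ follows from the a priori bound $|q(t,x)-x|\le\int_0^t\|(uv)(s,\cdot)\|_{L^\infty}\,ds<\infty$, which precludes finite-time blow-up. The $C^1$ dependence of $q$ on the initial position $x$ is the classical smooth-dependence-on-initial-data theorem, applicable precisely because $F$ is $C^1$ in $x$; hence $q\in C^1([0,T)\times\R)$. To obtain the explicit formula I would differentiate the flow equation in $x$: the function $q_x$ solves the linear variational equation $\frac{d}{dt}q_x(t,x)=(uv)_x(t,q(t,x))\,q_x(t,x)$ with $q_x(0,x)=1$, so integration yields $q_x(t,x)=\exp\big(\int_0^t(uv)_x(s,q(s,x))\,ds\big)$, which is exactly \eqref{diffflow}. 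Since the exponential is strictly positive, $q(t,\cdot)$ is strictly increasing; combined with $q(t,\cdot)\in C^1$ and the linear-growth bound $|q(t,x)-x|\le Ct$ forcing $q(t,x)\to\pm\infty$ as $x\to\pm\infty$, this shows $q(t,\cdot)$ is an increasing diffeomorphism of $\R$.

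The main obstacle is purely the bookkeeping of regularity: one must verify that $(uv)_x$ is genuinely continuous (not merely in $L^\infty$) so that the variational equation is a bona fide ODE with continuous right-hand side and the integral formula is rigorous. This is exactly the point where the hypothesis $s>3/2$, hence $H^s\hookrightarrow C^1_b$, is indispensable; beyond securing this, the argument is entirely parallel to the scalar Novikov case.
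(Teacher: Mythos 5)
Your proof is correct and is precisely the standard Cauchy--Lipschitz/variational-equation argument that the paper invokes implicitly when it says the lemma follows ``just as the case for the Novikov equation'' (the paper itself gives no proof, only this reference). Your reconstruction --- Sobolev embedding $H^s\hookrightarrow C^1_b$ for $s>3/2$, global solvability of the flow ODE, the linear variational equation yielding \eqref{diffflow}, and positivity plus surjectivity giving the increasing diffeomorphism --- is exactly the intended argument, with the interaction speed $uv$ replacing $u^2$.
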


\begin{lemma}\label{nonnegative}
Let $u_0, v_0\in H^s(\mathbb R)$, $s\geq 3$. Assume that $m_0=u_0-u_{0xx}$ and $n_0=v_0-v_{0xx}$ are nonnegative in the line.
Then for the corresponding strong solution $u, v\in C([0,T); H^s(\R))\cap C^1([0,T); H^{s-1}(\R))$ of the Cauchy problem of the two-component Novikov system \eqref{tcNK-1} with the initial data $u_0,\,v_0$, we have for all $t\in [0, T)$, $m(t, x)$ and $n(t, x)$ are both nonnegative functions. In addition, $u(t, \cdot) \geq 0$, $v(t, \cdot) \geq 0$ and $|u_{x}(t, \cdot)|\leq u(t, \cdot)$, $|v_{x}(t, \cdot,)|\leq v(t, \cdot)$ on the line.
\end{lemma}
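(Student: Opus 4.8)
The plan is to prove Lemma~\ref{nonnegative} by first establishing the sign-invariance of $m$ and $n$ via the method of characteristics, and then deducing the pointwise bounds $|u_x|\le u$ and $|v_x|\le v$ from the nonnegativity of $m$ and $n$ using the convolution representation $u=P*m$. First I would work along the characteristics $q(t,x)$ governed by the flow \eqref{firstflow}, for which Lemma~\ref{Flow} guarantees an increasing diffeomorphism with the explicit Jacobian \eqref{diffflow}. Rewriting the first equation of \eqref{tcNK-1} as $m_t+uv\,m_x=-(2vu_x+uv_x)m$ and evaluating along $q$, I would compute
\begin{equation*}
\frac{d}{dt}\Big(m(t,q(t,x))\,q_x(t,x)^{\lambda}\Big)
\end{equation*}
for a suitable exponent $\lambda$. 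The natural choice is to match the coefficient so that the transport term is absorbed: since $(uv)_x=u_xv+uv_x$, one checks that with an appropriate power the right-hand side collapses and $m(t,q(t,x))\,q_x(t,x)^{\alpha}$ is constant in $t$ for a power $\alpha$ determined by the structure $2vu_x+uv_x$ (and symmetrically for $n$ with $2uv_x+vu_x$). Because $q_x>0$ by \eqref{diffflow} and $m_0\ge 0$, this forces $m(t,q(t,x))\ge 0$; since $q(t,\cdot)$ is onto $\mathbb R$, $m(t,\cdot)\ge 0$ on the whole line, and the same argument gives $n(t,\cdot)\ge 0$.

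Next I would pass from $m\ge 0$ to the statements about $u$. Since $u=(1-\partial_x^2)^{-1}m=P*m$ with $P(x)=\tfrac12 e^{-|x|}\ge 0$ and $m\ge 0$, we immediately get $u(t,\cdot)\ge 0$, and likewise $v(t,\cdot)\ge 0$. For the derivative bound I would use the explicit splitting
\begin{equation*}
u(t,x)=\frac12 e^{-x}\int_{-\infty}^{x} e^{y}m(t,y)\,dy+\frac12 e^{x}\int_{x}^{\infty} e^{-y}m(t,y)\,dy,
\end{equation*}
and differentiate to obtain $u_x(t,x)=-\tfrac12 e^{-x}\int_{-\infty}^x e^y m\,dy+\tfrac12 e^x\int_x^\infty e^{-y} m\,dy$. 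Comparing $u\pm u_x$ then yields $u+u_x = e^{-x}\int_{-\infty}^x e^y m\,dy\ge 0$ and $u-u_x = e^{x}\int_x^\infty e^{-y} m\,dy\ge 0$, both nonnegative precisely because $m\ge 0$. Together these give $|u_x(t,\cdot)|\le u(t,\cdot)$, and the identical computation with $n\ge 0$ gives $|v_x(t,\cdot)|\le v(t,\cdot)$.

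The main obstacle I anticipate is the first step: getting the exponent in the characteristic identity exactly right so that the coupled, non-self-adjoint transport term $(2vu_x+uv_x)m$ is fully absorbed into the Jacobian factor. Unlike the scalar Novikov case where the coefficient is simply $3uu_x$ and matches a clean power of $q_x$, here the coefficient $2vu_x+uv_x$ is \emph{not} a multiple of $(uv)_x=u_xv+uv_x$, so a single power of $q_x(t,x)$ built from $\exp(\int_0^t (uv)_x\,ds)$ will not suffice. I expect one must instead introduce the integrating factor $\exp\!\big(\int_0^t (2vu_x+uv_x)(s,q(s,x))\,ds\big)$ directly and verify that $m(t,q(t,x))$ times this factor is conserved, which requires the regularity $u,v\in C([0,T);H^s)$ with $s\ge 3$ so that $u_x,v_x$ are continuous along characteristics and the ODE argument is rigorous. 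Once that conserved quantity is identified and seen to be positive, the remaining implications are routine.
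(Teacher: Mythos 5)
Your proposal is correct and follows essentially the same route as the paper: the paper likewise discards any power of $q_x$ and introduces the integrating factors $\gamma_1=\exp\big(\int_0^t(2vu_x+uv_x)(s,q(s,x))\,ds\big)$ and $\gamma_2=\exp\big(\int_0^t(2uv_x+vu_x)(s,q(s,x))\,ds\big)$ along the flow \eqref{firstflow}, concludes $m(t,q(t,x))=\gamma_1^{-1}m_0\ge 0$ and $n(t,q(t,x))=\gamma_2^{-1}n_0\ge 0$, and then obtains $u,v\ge 0$ and $|u_x|\le u$, $|v_x|\le v$ from exactly the convolution splitting you write down. The only slip is cosmetic: your formulas for $u+u_x$ and $u-u_x$ are interchanged (in fact $u+u_x=e^{x}\int_x^\infty e^{-y}m\,dy$ and $u-u_x=e^{-x}\int_{-\infty}^x e^{y}m\,dy$), but both are nonnegative either way, so the conclusion is unaffected.
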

\begin{proof}
It follows from the Cauchy problem \eqref{tcNK-1} of the two-component Novikov system that along the flow \eqref{firstflow}, $m$ and $n$ satisfy
\begin{equation}\label{weakformFLOW}
\left\{
\begin{aligned}
&m'+(2vu_x+uv_x)(t,q(t,x))m(t,q(t,x))=0,\\
&n'+(2uv_x+vu_x)(t,q(t,x))n(t,q(t,x))=0,
\end{aligned}
\right.
\end{equation}
where $'$ denotes the derivative with respect to $t$ along the flow \eqref{firstflow}. Denote
\begin{eqnarray*}
\gamma_1(t,x)=exp\left(\int^t_{0}(2vu_x+uv_x)(s,q(s,x))ds\right), \;  \gamma_2(t,x)=exp\left(\int^t_{0}(2uv_x+vu_x)(s,q(s,x))ds\right).
\end{eqnarray*}
Then they satisfy
\begin{eqnarray*}
\gamma'_{1}(t,x)=(2vu_x+uv_x)(t,q(t,x))\gamma_1,\quad \gamma'_{2}(t,x)=(2uv_x+vu_x)(t,q(t,x))\gamma_2.
\end{eqnarray*}
Let
\begin{eqnarray*}
\begin{aligned}
\tilde{m}(t,x)=\gamma_{1}(t,x)m(t,q(t,x)), \quad \tilde{n}(t,x)=\gamma_{2}(t,x)n(t,q(t,x)).
\end{aligned}
\end{eqnarray*}
\eqref{weakformFLOW} become
\begin{eqnarray}
\begin{aligned}
\tilde{m}'(t,x)=0, \quad \tilde{n}'(t,x)=0.
\end{aligned}
\end{eqnarray}
The equations
\begin{eqnarray*}
\begin{aligned}
\tilde{m}(t,x)=m_0 \quad and \quad \tilde{n}(t,x)=n_0
\end{aligned}
\end{eqnarray*}
lead to
\begin{eqnarray*}
\begin{aligned}
&m(t,q(t,x))=exp\left(-\int^t_{0}(2vu_x+uv_x)(s,q(s,x))ds\right)m_0,\\
&n(t,q(t,x))=exp\left(-\int^t_{0}(2uv_x+vu_x)(s,q(s,x))ds\right)n_0.
\end{aligned}
\end{eqnarray*}
Thus, for all $t\in [0, T)$, we have $m(t, \cdot) \geq 0$, $n(t, \cdot) \geq 0$. And then $u(t, \cdot) \geq 0$, $v(t, \cdot) \geq 0$.

Formally regarding $m(x)=u(x)-u_{xx}(x)$, it holds that for all $x\in \R$,
\begin{eqnarray*}
\begin{aligned}
u(x)=\frac{e^{-x}}{2}\int^{x}_{-\infty}e^{y}m(y)dy+\frac{e^{x}}{2}\int^{\infty}_{x}e^{-y}m(y)dy
\end{aligned}
\end{eqnarray*}
and
\begin{eqnarray*}
\begin{aligned}
u_{x}(x)=-\frac{e^{-x}}{2}\int^{x}_{-\infty}e^{y}m(y)dy+\frac{e^{x}}{2}\int^{\infty}_{x}e^{-y}m(y)dy.
\end{aligned}
\end{eqnarray*}
Then we infer that
\begin{eqnarray*}
\begin{aligned}
u(x)\geq |u_{x}(x)|, \; \forall x\in \mathbb R.
\end{aligned}
\end{eqnarray*}
Similarly, we find
\begin{eqnarray*}
\begin{aligned}
v(x)\geq |v_{x}(x)|, \; \forall x\in \mathbb R.
\end{aligned}
\end{eqnarray*}
This completes the proof of the lemma.
\end{proof}

\section{Stability of two-component peakons}

In this section, we prove Theorem 1.1, which will be based on a series of lemmas. Note that the assumptions on the initial profile guarantee the existence of a unique positive solution for the Cauchy problem \eqref{tcNK-1} of the two-component Novikov system. In general, for $a>0$ and $b>0$, the profile functions of peakon solutions $\varphi_c(x)=ae^{-\left| x\right|}$ and $\psi_c(x)=be^{-\left| x\right|}$ are in $H^1(\R)$, which have peaks at $x=0$, and thus
\begin{equation*}
\max_{x \in \mathbb{R}}{\varphi_c(x)}=\varphi_c(0)=a \quad \mathrm{and} \quad \max_{x \in \mathbb{R}}{\psi_c(x)}=\psi_c(0)=b.
\end{equation*}
A direct calculation gives
\begin{equation*}
E_{u}[\varphi_c(x)]=\left \| \varphi_c \right \|^2_{H^1}=2a^2, \qquad E_{v}[\psi_c(x)]=\left \| \psi_c \right \|^2_{H^1}=2b^2
\end{equation*}
and
\begin{equation*}
H[\varphi_c(x),\psi_c(x)]=2ab, \qquad F[\varphi_c(x),\psi_c(x)]=\frac43 \, a^2b^2.
\end{equation*}

Due to the conservation of the $H^1$-norm of each component $u$ and $v$, the following pointwise identities still hold for the two-component Novikov system as in the scalar CH and Novikov cases.
\begin{lemma}\label{lem1.1}
For any $u,v \in H^1(\mathbb{R})$ and $\xi \in \mathbb{R}$, we have
\begin{eqnarray}\label{energyESI}
\begin{aligned}
&E_{u}[u]-E_{u}[\varphi_c]=\left \| u- \varphi_c(\cdot - \xi) \right \|^2_{H^1(\mathbb{R})}+4a(u(\xi)-a),\\
&E_{v}[v]-E_{v}[\psi_c]=\left \| v- \psi_c(\cdot - \xi) \right \|^2_{H^1(\mathbb{R})}+4b(v(\xi)-b).
\end{aligned}
\end{eqnarray}
\end{lemma}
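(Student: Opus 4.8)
The plan is to expand the squared $H^1(\mathbb{R})$-norm of the difference and then evaluate the resulting cross term explicitly, exploiting the fact that the translated peakon profile is a Green's-function-type object for the operator $1-\partial_x^2$. Writing $g(x)=\varphi_c(x-\xi)=ae^{-|x-\xi|}$, I would start from the polarization identity
\begin{equation*}
\left\| u-g \right\|^2_{H^1}=\left\| u \right\|^2_{H^1}-2\langle u,g\rangle_{H^1}+\left\| g \right\|^2_{H^1},
\end{equation*}
and recall that $\left\| u \right\|^2_{H^1}=E_u[u]$ by definition, while $\left\| g \right\|^2_{H^1}=E_u[\varphi_c]=2a^2$ by translation invariance and the direct computation already recorded above.

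The key step is the evaluation of $\langle u,g\rangle_{H^1}=\int_{\mathbb{R}}(ug+u_xg_x)\,dx$. Here the only subtlety is that $g$ is merely $H^1$, with a corner at $x=\xi$: one has $g_{xx}=g$ pointwise for $x\neq\xi$, but $g_x$ jumps by $g_x(\xi^-)-g_x(\xi^+)=2a$ across the crest. I would therefore split the $\int u_xg_x\,dx$ integral at $x=\xi$ and integrate by parts on each half-line separately. The interior contributions reassemble into $\int ug\,dx$, which cancels the first term; the boundary contributions at $\pm\infty$ vanish by the $H^1$ decay; and the jump of $g_x$ produces exactly $2a\,u(\xi)$. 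Hence $\langle u,g\rangle_{H^1}=2a\,u(\xi)$. Equivalently, this is the statement $\langle u,g\rangle_{H^1}=\langle u,(1-\partial_x^2)g\rangle=2a\,u(\xi)$ read through the distributional identity $(1-\partial_x^2)g=2a\,\delta_\xi$.

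Substituting these three evaluations into the polarization identity gives
\begin{equation*}
\left\| u-\varphi_c(\cdot-\xi) \right\|^2_{H^1}=E_u[u]-4a\,u(\xi)+2a^2,
\end{equation*}
and rearranging, together with $E_u[\varphi_c]=2a^2$, yields $E_u[u]-E_u[\varphi_c]=\left\| u-\varphi_c(\cdot-\xi) \right\|^2_{H^1}+4a(u(\xi)-a)$, which is the first identity. The second identity follows verbatim with $a$ replaced by $b$ and $E_v[\psi_c]=2b^2$. The main (and essentially only) point requiring care is the corner of the peakon at $x=\xi$: since $\varphi_c(\cdot-\xi)\notin H^2$, the integration by parts must be performed on the two half-lines with the jump of the derivative tracked, rather than applied globally, and it is precisely this jump that supplies the pointwise term $u(\xi)$.
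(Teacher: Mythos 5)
Your proof is correct. The paper gives no proof of this lemma at all---it simply asserts that the identities hold ``as in the scalar CH and Novikov cases''---and your argument (polarization of the $H^1$ inner product plus the evaluation $\langle u,\varphi_c(\cdot-\xi)\rangle_{H^1}=2a\,u(\xi)$ via half-line integration by parts, tracking the derivative jump $2a$ at the crest, equivalently the distributional identity $(1-\partial_x^2)\varphi_c(\cdot-\xi)=2a\,\delta_\xi$) is exactly the standard Constantin--Strauss computation the authors are invoking.
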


In the following two lemmas, two energy identities relating some kind of critical values of $u$ and $v$ to the invariants $H[u, v]$ and $F[u, v]$ are established. Consider $0\not\equiv u,\,v\in H^{s}(\R)$, $s\geq 3$, and $u, v\geq 0$. Then $u,v\in C^2$ due to the Sobolev imbedding theory. Since $u$ and $v$ decay at infinity, it must have a point with global maximal value of the multiplication function $u(x)v(x)$. Thus, in the following, set for some $\xi\in \R$
\begin{eqnarray}\label{maxpoint}
M=\max_{x\in \R}\{u(x)v(x)\}=u(\xi)v(\xi).
\end{eqnarray}

\begin{lemma}\label{lem1.2}
Let $0\not\equiv u,\,v\in H^{s}(\R)$, $s\geq 3$ and $u, v\geq 0$. Define the functions $g_1(x)$ and $g_2(x)$ by
\begin{equation}\label{functiong1}
g_1(x)=
\left\{
\begin{aligned}
&u(x)-u_{x}(x), \quad x<\xi,\\
&u(x)+u_{x}(x), \quad x>\xi,
 \end{aligned}
\right.
\end{equation}
and
\begin{equation}\label{functiong2}
g_2(x)=
\left\{
\begin{aligned}
&v(x)-v_{x}(x), \quad x<\xi,\\
&v(x)+v_{x}(x), \quad x>\xi.
 \end{aligned}
\right.
\end{equation}
Then
\begin{eqnarray}\label{g1g2ESI}
\begin{aligned}
\int_{\mathbb{R}}{g_{1}(x)g_{2}(x)}dx=H[u,v]-2M.
\end{aligned}
\end{eqnarray}
\end{lemma}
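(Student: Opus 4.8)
The plan is to compute the integral $\int_{\mathbb{R}} g_1(x)g_2(x)\,dx$ directly by splitting the real line at the point $\xi$ and exploiting the fact that the definitions \eqref{functiong1}--\eqref{functiong2} flip the sign on the derivative terms precisely at $\xi$. First I would write
\[
\int_{\mathbb{R}} g_1 g_2\,dx = \int_{-\infty}^{\xi}(u-u_x)(v-v_x)\,dx + \int_{\xi}^{\infty}(u+u_x)(v+v_x)\,dx,
\]
and expand each product. On $(-\infty,\xi)$ the integrand equals $uv + u_xv_x - (u_xv + uv_x)$, while on $(\xi,\infty)$ it equals $uv + u_xv_x + (u_xv + uv_x)$.

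The key observation is that the cross term $u_xv + uv_x$ is exactly $(uv)_x$, a total derivative, so its contribution on each half-line can be evaluated by the fundamental theorem of calculus. Since $u,v\in H^1(\mathbb{R})$ decay at infinity, $(uv)(\pm\infty)=0$; and by the definition \eqref{maxpoint} we have $(uv)(\xi)=M$. Hence the cross-term contribution from the left piece is $-\int_{-\infty}^{\xi}(uv)_x\,dx = -(M-0) = -M$, while from the right piece it is $+\int_{\xi}^{\infty}(uv)_x\,dx = (0-M) = -M$. Thus both half-lines contribute $-M$ from the cross terms.

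Collecting the remaining terms, the sum $\int_{-\infty}^{\xi}(uv+u_xv_x)\,dx + \int_{\xi}^{\infty}(uv+u_xv_x)\,dx$ reconstitutes $\int_{\mathbb{R}}(uv+u_xv_x)\,dx = H[u,v]$, and therefore $\int_{\mathbb{R}} g_1 g_2\,dx = H[u,v] - 2M$, as asserted.

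I do not expect a serious obstacle here: the computation is elementary once the total-derivative structure of the cross term is recognized. The only point requiring a little care is the bookkeeping of signs---the sign flip engineered into $g_1$ and $g_2$ at $\xi$ is exactly what makes the left and right contributions of the cross term each equal $-M$, rather than cancelling. One should also note that the possible jump discontinuity of $g_1 g_2$ at the single point $\xi$ is irrelevant to the value of the integral. The hypotheses $u,v\in H^s(\mathbb{R})$ with $s\ge 3$ and $u,v\ge 0$ guarantee (via Sobolev embedding) that $u_x,v_x$ are continuous and decay at infinity, so all the integrals converge and the boundary terms at $\pm\infty$ vanish.
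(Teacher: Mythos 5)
Your proposal is correct and follows essentially the same route as the paper's proof: split the integral at $\xi$, expand the products, recognize the cross term $u_xv+uv_x=(uv)_x$ as a total derivative, and evaluate it using the decay at infinity and $(uv)(\xi)=M$ to obtain $H[u,v]-2M$. The only difference is that you spell out the boundary evaluations and the justification for convergence, which the paper leaves implicit.
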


\begin{proof}
To show \eqref{g1g2ESI}, we evaluate the integral of $g_1(x)g_2(x)$ on $\mathbb R$. Thus,
\begin{eqnarray*}
\begin{aligned}
&\int_{\mathbb{R}}{g_{1}(x)g_{2}(x)}dx\\
&=\int_{-\infty}^{\xi}{\left(u(x)-u_{x}(x)\right)\left(v(x)-v_{x}(x)\right)}dx+\int_{\xi}^{\infty}{\left(u(x)+u_{x}(x)\right)\left(v(x)+v_{x}(x)\right)}\,dx\\
&=\int_{-\infty}^{\xi}{\left(uv-(uv)_x+u_xv_x\right)}dx+\int_{\xi}^{\infty}{\left(uv+(uv)_x+u_xv_x\right)}\,dx\\
&=H[u,v]-2M.
\end{aligned}
\end{eqnarray*}
\end{proof}

The construction of the auxiliary function $h(x)$ in the following lemma is crucial in the proof of stability of peakons, which is different from the scalar cases of CH and DP equations. This new defined function is a nontrivial refinement of the case in the Novikov equation.

\begin{lemma}\label{lem1.3}
With the same assumptions and notation as in Lemma \ref{lem1.2}, define the function $h$ by
\begin{equation}\label{functionalh}
h(x)=
\left\{
\begin{aligned}
&uv-\frac{1}{3}(uv)_{x}-\frac{1}{3}u_{x}v_{x}, \quad x<\xi,\\
&uv+\frac{1}{3}(uv)_{x}-\frac{1}{3}u_{x}v_{x}, \quad x>\xi.
 \end{aligned}
\right.
\end{equation}
Then
\begin{eqnarray}\label{Finalesi}
\begin{aligned}
\int_{\mathbb{R}}{h(x)g_{1}(x)g_{2}(x)}dx=F[u,v]-\frac{4}{3}\,M^2.
\end{aligned}
\end{eqnarray}
\end{lemma}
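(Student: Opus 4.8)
The plan is to follow the strategy used for Lemma~\ref{lem1.2}: split $\int_{\mathbb{R}}h\,g_1g_2\,dx$ at the point $\xi$ and evaluate the integrand explicitly on each of the two half-lines, where $g_1,g_2,h$ are given by smooth, unambiguous formulas. Since $u,v\in H^s$ with $s\geq 3$, the Sobolev embedding gives $u,v\in C^2(\mathbb{R})$, so $u,v,u_x,v_x$ are continuous and decay at $\pm\infty$; this legitimizes the boundary bookkeeping below.

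The core of the argument is a direct algebraic expansion. On $x<\xi$ one multiplies out
\[
h\,g_1g_2=\bigl(uv-\tfrac13(uv)_x-\tfrac13u_xv_x\bigr)(u-u_x)(v-v_x)
\]
and regroups the resulting monomials in $u,v,u_x,v_x$. The decisive point --- and the reason the coefficients $\tfrac13$ in the definition of $h$ are exactly these --- is that the expansion separates cleanly into the $F[u,v]$-density plus a pure total derivative, namely
\[
h\,g_1g_2=\Bigl(u^2v^2+\tfrac13u^2v_x^2+\tfrac13v^2u_x^2+\tfrac43uvu_xv_x-\tfrac13u_x^2v_x^2\Bigr)-\tfrac23\frac{d}{dx}\bigl(u^2v^2\bigr).
\]
All monomials outside the $F$-density assemble into $-\tfrac43\,uv(uv)_x=-\tfrac23(u^2v^2)_x$, which is the crucial simplification. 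On $x>\xi$ the corresponding computation (now with $g_1=u+u_x$, $g_2=v+v_x$ and the $+\tfrac13(uv)_x$ branch of $h$) yields the same $F$-density but with the total derivative of opposite sign, $+\tfrac23(u^2v^2)_x$. Equivalently, the right half is obtained from the left by the substitution $u_x\mapsto-u_x$, $v_x\mapsto-v_x$, under which the $F$-density is invariant while the odd term $(u^2v^2)_x$ flips sign.

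Integrating the two halves separately and using $u^2v^2\to0$ at $\pm\infty$ together with $u(\xi)v(\xi)=M$, I would obtain
\[
\int_{-\infty}^{\xi}h\,g_1g_2\,dx=\int_{-\infty}^{\xi}(\text{$F$-density})\,dx-\tfrac23M^2,\qquad
\int_{\xi}^{\infty}h\,g_1g_2\,dx=\int_{\xi}^{\infty}(\text{$F$-density})\,dx-\tfrac23M^2,
\]
where the two boundary contributions at $\xi$ reinforce precisely because $u^2v^2$ is continuous there. Summing the two pieces gives $F[u,v]-\tfrac43M^2$, as claimed.

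The only genuine obstacle I anticipate is the bookkeeping in the expansion: one must confirm that every monomial not present in the $F$-density either cancels or coalesces into the single total derivative $\tfrac23(u^2v^2)_x$. This is exactly where the precise, somewhat unusual form of $h$ matters --- in particular the mixed term $-\tfrac13u_xv_x$, which has no scalar analogue --- so this is the step I would verify most carefully before concluding.
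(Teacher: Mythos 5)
Your proof is correct and takes essentially the same route as the paper's: split the integral at $\xi$, expand $h\,g_1g_2$ on each half-line into the $F$-density plus the total-derivative term $\mp\frac{2}{3}(u^2v^2)_x$ (which the paper writes equivalently as $-\frac{4}{3}\int (uv)(uv)_x\,dx$), and integrate using the decay of $u^2v^2$ at infinity and $u(\xi)v(\xi)=M$. The only cosmetic difference is your symmetry observation ($u_x\mapsto -u_x$, $v_x\mapsto -v_x$) to deduce the right-half expansion from the left, where the paper simply repeats the computation.
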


\begin{proof}
To show \eqref{Finalesi}, we evaluate the integral of $h(x)g_{1}(x)g_{2}(x)$ on $\mathbb R$. Thus,
\begin{eqnarray*}
\begin{aligned}
&\int_{\mathbb{R}}{h(x)g_{1}(x)g_{2}(x)}dx\\
&=\int_{-\infty}^{\xi}\left[uv-\frac{1}{3}(uv)_{x}-\frac{1}{3}u_{x}v_{x}\right]\left[uv-(uv)_{x}+u_{x}v_{x}\right]\,dx\\
&\qquad+\int_{\xi}^{\infty}\left[uv+\frac{1}{3}(uv)_{x}-\frac{1}{3}u_{x}v_{x}\right]\left[uv+(uv)_{x}+u_{x}v_{x}\right]\,dx\triangleq I+\Pi.
\end{aligned}
\end{eqnarray*}
We do computation:
\begin{eqnarray*}
\begin{aligned}
I&=\int_{-\infty}^{\xi}\left[uv-\frac{1}{3}(uv)_{x}-\frac{1}{3}u_{x}v_{x}\right]\left[uv-(uv)_{x}+u_{x}v_{x}\right]\,dx\\
&=\int_{-\infty}^{\xi}\left(u^2v^2+\frac{1}{3}u^2v_x^2+\frac{1}{3}v^2u_x^2+\frac{4}{3}uvu_xv_x-\frac{1}{3}u_x^2v_x^2\right)\,dx-\frac{4}{3}\,\int_{-\infty}^{\xi}(uv)(uv)_{x}\,dx\\
&=\int_{-\infty}^{\xi}\left(u^2v^2+\frac{1}{3}u^2v_x^2+\frac{1}{3}v^2u_x^2+\frac{4}{3}uvu_xv_x-\frac{1}{3}u_x^2v_x^2\right)\,dx-\frac{2}{3}M^2.
\end{aligned}
\end{eqnarray*}
Similarly,
\begin{eqnarray*}
\begin{aligned}
\Pi=\int_{\xi}^{\infty}\left(u^2v^2+\frac{1}{3}u^2v_x^2+\frac{1}{3}v^2u_x^2+\frac{4}{3}uvu_xv_x-\frac{1}{3}u_x^2v_x^2\right)\,dx-\frac{2}{3}M^2.
\end{aligned}
\end{eqnarray*}
Combining above, we have
\begin{eqnarray*}
\begin{aligned}
&\int_{\mathbb{R}}{h(x)g_{1}(x)g_{2}(x)}dx\\
&=\int_{-\infty}^{\infty}\left(u^2v^2+\frac{1}{3}u^2v_x^2+\frac{1}{3}v^2u_x^2+\frac{4}{3}uvu_xv_x-\frac{1}{3}u_x^2v_x^2\right)\,dx-\frac{4}{3}\,M^2=F[u,v]-\frac{4}{3}\,M^2.
\end{aligned}
\end{eqnarray*}
\end{proof}

With the two energy identities \eqref{g1g2ESI} and \eqref{Finalesi} in hand, one can derive the following delicate relation between the second order conserved quantity $H[u, v]$ and the higher-order conserved quantity $F[u, v]$ for the strong solution $(u, v)$.

\begin{lemma}\label{lem1.4}
Assume that $0\not\equiv u_0,\,v_0\in H^s$, $s\geq 3$ and $m_0=u_0-u_{0xx}\geq 0$, $n_0=v_0-v_{0xx}\geq 0$. For the corresponding strong solution $(u(t, x),\,v(t, x))$ with initial data $(u_0,\,v_0)$ in the lifespan $[0,\,T)$, there holds
\begin{eqnarray}\label{functionalESI}
\begin{aligned}
F[u, v]-\frac{4}{3}M(t)H[u, v]+\frac{4}{3}M(t)^2 \leq 0, \quad \forall t\in [0,\,T),
\end{aligned}
\end{eqnarray}
where $M(t)=\max_{x\in R}\{u(t, x)v(t, x)\}=u(t, \xi(t))v(t, \xi(t))$ for some trajectory $\xi(t)\in \R$ in $[0,\,T)$.
\end{lemma}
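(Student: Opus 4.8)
The plan is to collapse the left-hand side of \eqref{functionalESI} into a single integral whose integrand is manifestly nonpositive, using the two energy identities already in hand. First I would eliminate $F[u,v]$ and $H[u,v]$ in favour of the weighted integrals of $g_1g_2$: at each fixed $t$, apply Lemma \ref{lem1.3} to write $F[u,v]=\int_{\mathbb R}h\,g_1g_2\,dx+\tfrac43 M^2$ and Lemma \ref{lem1.2} to write $H[u,v]=\int_{\mathbb R}g_1g_2\,dx+2M$, with $\xi=\xi(t)$ the maximiser from \eqref{maxpoint}. Substituting both and collecting the $M^2$ contributions, which cancel exactly since $\tfrac43-\tfrac83+\tfrac43=0$, reduces the claim to
\begin{equation*}
F[u,v]-\tfrac43 M H[u,v]+\tfrac43 M^2=\int_{\mathbb R}\Big(h(x)-\tfrac43 M\Big)g_1(x)g_2(x)\,dx.
\end{equation*}
So it suffices to show the integrand is pointwise nonpositive.

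The sign of $g_1g_2$ follows directly from Lemma \ref{nonnegative}. Since $m_0,n_0\ge 0$, that lemma gives $u,v\ge 0$ together with $|u_x|\le u$ and $|v_x|\le v$. Hence for $x<\xi$ both $g_1=u-u_x$ and $g_2=v-v_x$ are nonnegative, and for $x>\xi$ both $g_1=u+u_x$ and $g_2=v+v_x$ are nonnegative, so $g_1g_2\ge 0$ everywhere. It then remains to establish the pointwise bound $h(x)\le\tfrac43 M$.

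This pointwise bound is the crux. At a point with $u,v>0$ I would set $p=u_x/u$ and $q=v_x/v$, which lie in $[-1,1]$ by Lemma \ref{nonnegative}; points with $u=0$ or $v=0$ force the matching derivative to vanish, giving $h=0\le\tfrac43 M$, so they are harmless. Expanding the definition \eqref{functionalh} yields $h=uv\,\phi(p,q)$ on $x<\xi$ with $\phi(p,q)=1-\tfrac13(p+q)-\tfrac13 pq$, and $h=uv\,\tilde\phi(p,q)$ on $x>\xi$ with $\tilde\phi(p,q)=1+\tfrac13(p+q)-\tfrac13 pq$. The elementary factorisations $\tfrac43-\phi=\tfrac13(1+p)(1+q)\ge 0$ and $\tfrac43-\tilde\phi=\tfrac13(1-p)(1-q)\ge 0$ show $\phi,\tilde\phi\le\tfrac43$ on $[-1,1]^2$. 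Since $0\le uv\le M$, a two-case split closes the estimate: if the coefficient is $\le 0$ then $h\le 0\le\tfrac43 M$, while if it is positive then $h\le M\cdot(\text{coefficient})\le\tfrac43 M$. Combining $h-\tfrac43 M\le 0$ with $g_1g_2\ge 0$ makes the integrand nonpositive, which gives \eqref{functionalESI}.

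I expect the only genuinely delicate step to be the pointwise estimate $h\le\tfrac43 M$. Unlike $uv$, the auxiliary density $h$ carries first derivatives and is not directly governed by the maximum constraint, so the decisive insight is the reparametrisation through the bounded ratios $p,q\in[-1,1]$ and the observation that the constant $\tfrac43$ is precisely the one for which $\tfrac43-\phi$ and $\tfrac43-\tilde\phi$ factor as products of the nonnegative quantities $1\pm p$ and $1\pm q$. The remaining ingredients — the cancellation of the $M^2$ terms and the nonnegativity of $g_1g_2$ — are routine consequences of the earlier lemmas.
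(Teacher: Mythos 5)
Your proposal is correct and follows essentially the same route as the paper: it combines the identities of Lemmas \ref{lem1.2} and \ref{lem1.3} with the sign information $|u_x|\le u$, $|v_x|\le v$ from Lemma \ref{nonnegative}, reducing everything to $g_1g_2\ge 0$ and the pointwise bound $h\le\tfrac43 uv\le\tfrac43 M$. Your factorisations $\tfrac43-\phi=\tfrac13(1+p)(1+q)$ and $\tfrac43-\tilde\phi=\tfrac13(1-p)(1-q)$ are exactly the paper's observation $h=\tfrac43 uv-\tfrac13(u\pm u_x)(v\pm v_x)$ written in the normalised variables $p=u_x/u$, $q=v_x/v$.
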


\begin{proof}
First, by the sign-invariant property, the solution $(u(t, x),\,v(t, x))$ satisfies $m(t, x)=u(t, x)-\partial^2_{x}u(t, x)\geq 0$ and $n(t, x)=v(t, x)-\partial^2_{x}v(t, x)\geq 0$ for all $(t, x)\in [0,\,T)\times \R$. It follows that $(u(t, x),\,v(t, x))$ is positive solution and fulfills all the conditions assumed in Lemmas 3.2 and 3.3. Hence, we have the following energy identities of the dynamical forms in $[0,\,T)$
\begin{equation}\label{H(t)}
\int_{\mathbb{R}}{g_{1}(t, x)g_{2}(t, x)}dx=H[u, v]-2M(t)
\end{equation}
and
\begin{equation}\label{F(t)}
\int_{\mathbb{R}}{h(t, x)g_{1}(t, x)g_{2}(t, x)}dx=F[u, v]-\frac{4}{3}\,M(t)^2.
\end{equation}

We now claim that for any $(t, x)\in [0,\,T)\times \R$
\begin{equation*}
h(t, x)=
\left\{
\begin{aligned}
&\LC uv-\frac{1}{3} (uv)_{x}-\frac{1}{3}u_{x}v_{x}\RC(t, x), \quad x<\xi(t),\\
&\LC uv+\frac{1}{3} (uv)_x -\frac{1}{3}u_{x}v_{x}\RC(t, x), \quad x>\xi(t)
 \end{aligned}
\right.
\leq \frac{4}{3}u(t, x)v(t, x).
\end{equation*}
In fact, due to the definition of $h(t, x)$, it follows from the fact $u(t, x)\geq |u_{x}(t, x)|$ and $v(t, x)\geq |v_{x}(t, x)|$ that
\begin{equation*}
h(t, x)= \frac{4}{3}u(t, x)v(t, x)-\frac{1}{3}(u\pm u_{x}(t, x))(v\pm v_{x}(t, x))\leq \frac{4}{3}u(t, x)v(t, x).
\end{equation*}
Thus, the combination of the above inequalities yields
\begin{eqnarray*}
h(t, x)\leq \frac{4}{3}u(t, x)v(t, x)\leq \frac{4}{3}\max_{x\in \R}\{u(t, x)v(t, x)\}=\frac{4}{3}M(t), \quad \forall (t, x)\in [0,\,T)\times \R.
\end{eqnarray*}

Now, using estimates $|u_x|\leq u$ and $|v_x|\leq v$ again in the expressions \eqref{functiong1} and \eqref{functiong2}, we obtain \eqref{functionalESI} from \eqref{H(t)} and \eqref{F(t)}.
\end{proof}

In the following lemma, we study the perturbation of the conserved quantities around the profile functions $\varphi_c(x)$ and $\psi_c(x)$ of peakon solutions, under the case of time independence.

\begin{lemma}\label{lem1.5}
For $u, v \in H^{s}(\R)$, $s\geq 3$, if $\|u-\varphi_{c}\|_{H^{1}(\mathbb{R})} < \delta$ and $\|v-\psi_{c}\|_{H^{1}(\mathbb{R})} < \delta$ with $0<\delta<1/2$, then
\begin{equation*}
\left |E_{u}[u]-E_{u}[\varphi_{c}]\right |<2\sqrt{2}a\delta+\delta^2, \quad \left |E_{v}[v]-E_{u}[\psi_{c}]\right|<2\sqrt{2}b\delta+\delta^2
\end{equation*}
and
\begin{equation*}
\left|H[u,v]-H[\varphi_{c},\psi_{c}]\right|<2\sqrt{2}(a+b)\delta +6\delta^{2}, \quad \left|F[u,v]-F[\varphi_{c},\psi_{c}]\right|<C\delta+O(\delta^{2}),
\end{equation*}
where $C>0$ is a constant depending on $a$, $b$, $\|u\|_{H^s}$ and $\|v\|_{H^s}$.
\end{lemma}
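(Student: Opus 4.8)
The plan is to estimate each of the four differences $|E_u[u]-E_u[\varphi_c]|$, $|E_v[v]-E_v[\psi_c]|$, $|H[u,v]-H[\varphi_c,\psi_c]|$ and $|F[u,v]-F[\varphi_c,\psi_c]|$ directly from the integral formulas defining these conserved quantities, exploiting that each functional is a polynomial (of degree two for the first three, degree four for the last) in $u,v$ and their first derivatives, so that the difference splits into terms that are bilinear or multilinear in the perturbations $p:=u-\varphi_c$ and $q:=v-\psi_c$. The two assumptions $\|p\|_{H^1}<\delta$ and $\|q\|_{H^1}<\delta$ control everything linear and quadratic in $p,q$; the only extra ingredient needed for the higher powers appearing in $F$ is an $L^\infty$-type bound, supplied by the embedding $H^1(\R)\hookrightarrow L^\infty(\R)$ together with $\|u\|_{H^s},\|v\|_{H^s}$ to absorb products that carry an extra derivative.

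For the two energy terms I would write $E_u[u]-E_u[\varphi_c]=\|u\|_{H^1}^2-\|\varphi_c\|_{H^1}^2=\langle u-\varphi_c,u+\varphi_c\rangle_{H^1}=\|p\|_{H^1}^2+2\langle p,\varphi_c\rangle_{H^1}$. The cross term is bounded by Cauchy–Schwarz by $2\|p\|_{H^1}\|\varphi_c\|_{H^1}=2\sqrt2\,a\|p\|_{H^1}<2\sqrt2\,a\delta$, while the square term is $<\delta^2$, giving the stated inequality; the $v$-estimate is identical with $b$ in place of $a$. For $H$ I would expand the bilinear form $H[u,v]=\langle u,v\rangle_{H^1}$ (treating $uv+u_xv_x$ as the $H^1$ inner product of $u$ and $v$) as $H[\varphi_c,\psi_c]+\langle p,\psi_c\rangle_{H^1}+\langle \varphi_c,q\rangle_{H^1}+\langle p,q\rangle_{H^1}$, bound the two linear cross terms by $\sqrt2\,b\,\delta$ and $\sqrt2\,a\,\delta$ and the mixed term by $\delta^2$; the constant $6$ on $\delta^2$ leaves comfortable slack to absorb the various boundary/cross contributions, so this step is routine.

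The main obstacle is $F[u,v]$, since it is quartic and, crucially, contains the term $-\tfrac13 u_x^2 v_x^2$ with two derivatives on each factor; a naive expansion produces summands such as $p_x^2 q_x^2$ or $\varphi_{c,x}^2 p_x q_x$ whose integrals are not controlled by the $H^1$ norm of the perturbation alone. The resolution is to peel off one factor in $L^\infty$ and keep the remaining ones in $L^2$: every quartic monomial in $F$ has the schematic form (smooth profile or $H^s$ solution) $\times$ (perturbation factors), and using $\|u\|_{L^\infty}\le\|u\|_{H^1}\le\|u\|_{H^s}$, $\|u_x\|_{L^\infty}\lesssim\|u\|_{H^s}$ (valid for $s\ge 3$) one bounds each such integral by a constant times $\delta$ (for the terms linear in the perturbation) plus higher-order $O(\delta^2)$ contributions. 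I would organize the expansion $F[u,v]-F[\varphi_c,\psi_c]$ so that the genuinely linear-in-perturbation part is collected into the coefficient of $\delta$, absorbing its dependence on $a,b,\|u\|_{H^s},\|v\|_{H^s}$ into the constant $C$, and all remaining products of two or more perturbation factors into the $O(\delta^2)$ remainder. The only care required is bookkeeping: confirming that no term forces two derivatives onto a bare perturbation factor measured in $H^1$ without an accompanying $L^\infty$ factor carrying the extra regularity, which is exactly why the hypothesis $u,v\in H^s$ with $s\ge 3$ (rather than merely $H^1$) enters the constant $C$.
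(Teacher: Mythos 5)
Your proposal is correct and follows essentially the same route as the paper's proof: direct multilinear expansion of each conserved functional in the perturbations, with Cauchy--Schwarz/H\"older for the quadratic functionals $E_u$, $E_v$, $H$, and, for the quartic functional $F$, peeling off $L^\infty$ factors (controlled via $a$, $b$ and the $H^s$ norms, which is exactly why $C$ depends on $\|u\|_{H^s}$, $\|v\|_{H^s}$) so that linear-in-perturbation terms give $C\delta$ and the rest $O(\delta^2)$. Your bookkeeping for $H$ even yields the slightly sharper bound $\sqrt{2}(a+b)\delta+\delta^2$, which of course implies the stated one.
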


\begin{proof}
Let $\tilde{u}=u-\varphi_{c}$ and $\tilde{v}=v-\psi_{c}$, for convenience. Since $\|\tilde{u}\|_{H^{1}(\mathbb{R})} < \delta$ and $\|\tilde{v}\|_{H^{1}(\mathbb{R})} < \delta$, it follows that
\begin{eqnarray*}
\begin{aligned}
\left|E_{u}[u]-E_{u}[\varphi_{c}]\right|=&\left|\|u_{0}\|^{2}_{H^{1}(\mathbb R)}-\|\varphi_{c}\|^{2}_{H^{1}(\mathbb R)}\right|\\
=&\left|\|u_{0}\|_{H^{1}(\mathbb R)}-\|\varphi_{c}\|_{H^{1}(\mathbb R)}\right|\left|\|u_{0}\|_{H^{1}(\mathbb R)}+\|\varphi_{c}\|_{H^{1}(\mathbb R)}\right|\\
\leq &\|\tilde{u}_{0}\|_{H^{1}(\mathbb{R})} \left(\|\tilde{u}_{0}\|_{H^{1}(\mathbb R)}+2\|\varphi_{c}\|_{H^{1}(\mathbb R)}\right)\leq 2\sqrt{2}\,a\delta+\delta^2.
\end{aligned}
\end{eqnarray*}
Similarly, we have
\begin{align*}
\left|E_{v}[v]-E_{v}[\psi_{c}]\right|<2\sqrt{2}\,b\delta+\delta^2.
\end{align*}

Next, we now estimate
\begin{eqnarray*}
\begin{aligned}
\Big|H[u, v]-&H[\varphi_{c},\psi_{c}]\Big|=\left|\int_{\mathbb R}\left(uv+u_xv_x\right)\,dx-\int_{\mathbb R}\left(\varphi_{c}\psi_{c}+\varphi'_{c}\psi'_{c}\right)dx\right|\\
=&\left|\int_{\mathbb R}\left(\tilde{u}\tilde{v}+u\tilde{v}+v\tilde{u}\right)\,dx+\int_{\mathbb R}\left[(u_x-\varphi'_{c})(v_x-\psi'_{c})+u_{x}(v_{x}-\psi'_{c})+v_{x}(u_{x}-\varphi'_{c})\right]\,dx\right|\\
\leq & \int_{\mathbb R}\left|\tilde{u}\tilde{v}\right|\,dx+\int_{\mathbb R}\left|\tilde{u}v\right|\,dx+\int_{\mathbb R}\left|(v-\varphi_{c})u\right|\,dx\\
&\quad +\int_{\mathbb R}\left|\tilde{u}'\tilde{v}'\right|dx+\int_{\mathbb R}\left|\tilde{u}'v_{x}\right|\,dx+\int_{\mathbb R}\left|\tilde{v}'u_{x}\right|\,dx.
\end{aligned}
\end{eqnarray*}
Using the H$\mathrm{\ddot{o}}$lder inequality
\begin{eqnarray*}
\int_{\mathbb R}\left|\tilde{u}v\right|\,dx \leq \left(\int_{\mathbb R}\tilde{u}^{2}\,dx\right)^{\frac12}\left(\int_{\mathbb R}v^2\,dx\right)^{\frac12}\leq \|\tilde{u}\|_{H^{1}(\mathbb{R})}\left(\|\tilde{v}\|_{H^{1}(\mathbb{R})}+\|\varphi_{c}\|_{H^{1}(\mathbb{R})}\right) \leq \delta^{2}+\sqrt{2}\,a \delta,
\end{eqnarray*}
we deduce that
\begin{align*}
\left|H[u, v]-H[\varphi_{c},\psi_{c}]\right|<2\sqrt{2}(a+b)\delta +6\delta^{2}.
\end{align*}

Finally, we estimate
\begin{eqnarray*}
\begin{aligned}
\Big|F&[u, v]-F[\varphi_{c},\psi_{c}]\Big|=\Bigg|\int_{\mathbb{R}}\left(u^2v^2+\frac{1}{3}u^2v_{x}^2+\frac{1}{3}v^2u_{x}^2+\frac{4}{3}uvu_{x}v_{x}-\frac{1}{3}u_{x}^2v_{x}^2\right)\,dx\\
&\qquad\qquad\qquad\qquad\quad\; -\int_{\mathbb{R}}\left[\varphi_{c}^{2}\psi_{c}^{2}+\frac{1}{3}\varphi_{c}^{2}(\psi'_{c})^{2}+\frac{1}{3}(\varphi'_{c})^{2}\psi_{c}^{2}+\frac{4}{3}\varphi_{c}\psi_{c}\varphi'_{c}\psi'_{c}-\frac{1}{3}(\varphi'_{c})^{2}(\psi'_{c})^{2}\right]\,dx\Bigg|\\
&\leq \int_{\mathbb{R}}\left|u^2v^2-\varphi_{c}^{2}\psi_{c}^{2}\right|\,dx+\frac{1}{3}\,\int_{\mathbb{R}}\left|u^2v_{x}^2-\varphi_{c}^{2}(\psi'_{c})^{2}\right|\,dx+\frac{1}{3}\,\int_{\mathbb{R}}\left|v^2u_{x}^2-(\varphi'_{c})^{2}\psi_{c}^{2}\right|\,dx\\
&\quad\quad+\frac{4}{3}\,\left|\int_{\mathbb{R}}\big(uvu_{x}v_{x}-\varphi_{c}\psi_{c}\varphi'_{c}\psi'_{c}\big)dx\right|+\frac{1}{3}\,\left|\int_{\mathbb{R}}\left[u_{x}^2v_{x}^2-(\varphi'_{c})^{2}(\psi'_{c})^{2}\right]dx\right|\\
&\triangleq I_{1}+\frac{1}{3}I_{2}+\frac{1}{3}I_{3}+\frac{4}{3}I_{4}+\frac{1}{3}I_{5}.
\end{aligned}
\end{eqnarray*}
For the first term $I_{1}$, we obtain
\begin{eqnarray*}
\begin{aligned}
I_{1}& \leq  \int_{\mathbb R}\left|\left(u^2-\varphi^{2}_{c}\right)v^2\right|\,dx+\int_{\mathbb R}\left|\left(v^2-\psi^{2}_{c}\right)\varphi^{2}_{c}\right|\,dx\\
&\leq  \|v\|^{2}_{L^\infty}\left(\int_{\mathbb R}\tilde{u}^2\,dx\right)^{\frac 12}\left(\int_{\mathbb R}(u+\varphi_{c})^2\,dx\right)^{\frac 12}+\|\varphi_{c}\|^{2}_{L^\infty}\left(\int_{\mathbb R}\tilde{v}^2\,dx\right)^{\frac 12}\left(\int_{\mathbb R}(v+\psi_{c})^2\,dx\right)^{\frac 12}\\
&\leq \|v\|^{2}_{L^\infty}\|\tilde{u}\|_{H^1}\left(\|\tilde{u}\|_{H^1}+2\|\varphi_{c}\|_{H^1}\right)+\|\varphi_{c}\|^{2}_{L^\infty}\|\tilde{v}\|_{H^1}\left(\|\tilde{v}\|_{H^1}+2\|\psi_{c}\|_{H^1}\right)\\
&\leq \frac{1}{2}\left(\delta+\sqrt{2}b\right)^2\delta\left(\delta+2\sqrt{2}a\right)+b^2\delta\left(\delta+2\sqrt{2}b\right)\leq 2\sqrt{2}b^2(a+b)\delta+O(\delta^2).
\end{aligned}
\end{eqnarray*}
For the second term $I_{2}$, we have
\begin{align*}
I_{2}&\leq \int_{\mathbb R}\left|\left(u^2-\varphi^{2}_{c}\right)v_{x}^2\right|\,dx+\int_{\mathbb R}\left|\left(v_{x}^2-\psi'^{2}_{c}\right)\varphi^{2}_{c}\right|\,dx\\
&\leq \|v_{x}\|^{2}_{L^\infty}\left(\int_{\mathbb R}(\tilde{u})^2\,dx\right)^{\frac 12}\left(\int_{\mathbb R}(u+\varphi_{c})^2\,dx\right)^{\frac 12}
+\|\varphi_{c}\|^{2}_{L^\infty}\left(\int_{\mathbb R}(\tilde{v}_{x})^2\,dx\right)^{\frac 12}\left(\int_{\mathbb R}(v_{x}+\psi'_{c})^2\,dx\right)^{\frac 12}\\
&\leq  \|v_{x}\|^{2}_{L^\infty}\|\tilde{u}\|_{H^1}\left(\|\tilde{u}\|_{H^1}+2\|\varphi_{c}\|_{H^1}\right)+\|\varphi_{c}\|^{2}_{L^\infty}\|\tilde{v}\|_{H^1}\left(\|\tilde{v}\|_{H^1}+2\|\psi_{c}\|_{H^1}\right)\\
&\leq \frac{1}{2}\left(\delta+\sqrt{2}b\right)^2\delta\left(\delta+2\sqrt{2}a\right)+b^2\delta\left(\delta+2\sqrt{2}b\right)\leq 2\sqrt{2}b^2(a+b)\delta+O(\delta^2).
\end{align*}
Similarly, for the term $I_{3}$, we get
\begin{align*}
I_{3}< 2\sqrt{2}a^2(a+b)\delta+O(\delta^{2}).
\end{align*}
For the fourth term $I_{4}$, we estimate
\begin{align*}
I_{4} & \leq \int_{\mathbb{R}}\left|\tilde{u}vu_{x}v_{x}\right|\,dx+\int_{\mathbb{R}}\left|\varphi_{c}\tilde{v}vu_{x}v_{x}\right|\,dx+\int_{\mathbb{R}}\left|\varphi_{c}\psi_{c}\tilde{u}_{x}v_{x}\right|\,dx+\int_{\mathbb{R}}\left|\varphi_{c}\psi_{c}\varphi'_{c}\tilde{v}_{x}\right|\,dx\\
& \leq \frac{1}{2}\|\tilde{u}\|_{L^{\infty}}\|v\|_{L^{\infty}}\int_{\mathbb{R}}(u^{2}_{x}+v^{2}_{x})\,dx+\frac{1}{2}\|\tilde{v}\|_{L^{\infty}}\|u\|_{L^{\infty}}\int_{\mathbb{R}}(u^{2}_{x}+v^{2}_{x})\,dx\\
&\;\;\; +\|\varphi_{c}\|_{L^{\infty}}\|\psi_{c}\|_{L^{\infty}}\left(\int_{\mathbb{R}}\tilde{u}_{x}^{2}\,dx\right)^{\frac{1}{2}}\left(\int_{\mathbb{R}}v^{2}_{x}\,dx\right)^{\frac{1}{2}}+\|\varphi_{c}\|_{L^{\infty}}\|\psi_{c}\|_{L^{\infty}}\left(\int_{\mathbb{R}}\tilde{v}_{x}^{2}\,dx\right)^{\frac{1}{2}}\left(\int_{\mathbb{R}}u^{2}_{x}\,dx\right)^{\frac{1}{2}}\\
& \leq \frac{1}{2}\left(\|\tilde{u}\|_{H^{1}(\mathbb{R})}\|v\|_{H^{1}(\mathbb{R})}+\|\tilde{v}\|_{H^{1}(\mathbb{R})}\|u\|_{H^{1}(\mathbb{R})}\right)\left(\|v\|^{2}_{H^{1}(\mathbb{R})}+\|u\|^{2}_{H^{1}(\mathbb{R})}\right)\\
&\quad +\frac{1}{2}\|\varphi_{c}\|_{H^{1}(\mathbb{R})}\|\psi_{c}\|_{H^{1}(\mathbb{R})}\left(\|\tilde{u}\|_{H^{1}(\mathbb{R})}\|v\|_{H^{1}(\mathbb{R})}+\|\tilde{v}\|_{H^{1}(\mathbb{R})}\|u\|_{H^{1}(\mathbb{R})}\right)\\
& \leq \sqrt{2}(a+b)(a^{2}+b^{2}+ab)\delta+O(\delta^{2}).
\end{align*}
For the fifth term $I_{5}$, we have
\begin{align*}
I_{5} & \leq \left|\int_{\mathbb{R}}u_{x}^{2}\left(v_{x}^2-(\psi'_{c})^{2}\right)\,dx\right|+\left|\int_{\mathbb{R}}(\psi'_{c})^{2}(u^{2}_{x}-(\varphi'_{c})^{2})\,dx\right|\\
& \leq \|u\|^{2}_{L^{\infty}}\left(\int_{\mathbb{R}}\tilde{v}_{x}^{2}\,dx\right)^{\frac{1}{2}}\left(\int_{\mathbb{R}}(v_{x}+\psi'_{c})^{2}\,dx\right)^{\frac{1}{2}}+\|\psi'_{c}\|^{2}_{L^{\infty}}\left(\int_{\mathbb{R}}\tilde{u}_{x}^{2}dx\right)^{\frac{1}{2}}\left(\int_{\mathbb{R}}(u_{x}+\varphi'_{c})^{2}dx\right)^{\frac{1}{2}}\\
& \leq \frac{1}{2}\left[\|u\|^{2}_{H^{1}(\mathbb{R})}\|\tilde{v}\|_{H^{1}(\mathbb{R})}\left(\|\tilde{v}\|_{H^{1}(\mathbb{R})}+2\|\psi_{c}\|_{H^{1}(\mathbb{R})}\right)+\|\psi'_{c}\|^{2}_{H^{1}(\mathbb{R})}\|\tilde{u}\|_{H^{1}(\mathbb{R})}\left(\|\tilde{u}\|_{H^{1}(\mathbb{R})}+2\|\varphi_{c}\|_{H^{1}(\mathbb{R})}\right)\right]\\
& \leq 2\sqrt{2}ab(a+b)\delta+O(\delta^{2}).
\end{align*}
Accordingly, for $0<\delta<1/2$, we have
\begin{align*}
\left|F(u,v)-F(\varphi_{c},\psi_{c})\right|\leq C\delta +O(\delta^{2}),
\end{align*}
where $C$ is a constant depending on $a$, $b$, $\|u\|_{H^s}$ and $\|v\|_{H^s}$, which completes the proof of this lemma.
\end{proof}

Now, we are in the position to prove that the strong solution satisfies the novel error estimates at some kind of critical point under the assumption of small perturbation of initial data around the profile of peakon solutions.

\begin{lemma}
Assume that $(u(t, x),\,v(t, x))$, $t\in [0,\,T)$, is the corresponding strong solution of the Cauchy problem \eqref{tcNK-1} with initial data $(u_0(x),\,v_0(x))$ satisfying $0\not\equiv u_0,\,v_0\in H^s(\R)$, $s\geq 3$ and $m_0=u_0-u_{0xx}\geq 0$, $n_0=v_0-v_{0xx}\geq 0$. If $(u_0(x),\,v_0(x))$ satisfies
\begin{equation*}
\|u_0-\varphi_{c}\|_{H^{1}(\mathbb{R})} < \delta \quad \mathrm{and} \quad \|v_0-\psi_{c}\|_{H^{1}(\mathbb{R})} < \delta,
\end{equation*}
with $0<\delta<1/2$, then there exists a constant $C>0$ such that
\begin{eqnarray*}
|u(t, \xi(t))-a|<C\delta^{\frac{1}{2}} \quad \mathrm{and} \quad |v(t, \xi(t))-b|<C\delta^{\frac{1}{2}}, \quad \forall t\in [0,\,T),
\end{eqnarray*}
where $\xi(t)\in \R$ is located such that $u(t, \xi(t))v(t, \xi(t))=\max_{x\in \mathbb R}\{u(t, x)v(t, x)\}=M(t)$.
\end{lemma}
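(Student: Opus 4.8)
The plan is to run the chain: first propagate the closeness of the four conserved quantities to their peakon values, then convert the quadratic inequality of Lemma~\ref{lem1.4} into a bound on $M(t)$, next extract one-sided bounds for each component at $\xi(t)$ from Lemma~\ref{lem1.1}, and finally decouple these into the two-sided estimates. Since $E_{u}[u]$, $E_{v}[v]$, $H[u,v]$ and $F[u,v]$ are conserved along the flow (Theorem~\ref{th-2.1}), Lemma~\ref{lem1.5} applied to the initial data yields, for every $t\in[0,T)$,
\[
|E_{u}[u]-2a^2|+|E_{v}[v]-2b^2|+|H[u,v]-2ab|+\Big|F[u,v]-\tfrac{4}{3}a^2b^2\Big|\le C_0\delta,
\]
with $C_0=C_0(a,b,\|u_0\|_{H^s},\|v_0\|_{H^s})$. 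Because $\|w\|_{L^\infty}^2\le\tfrac12\|w\|_{H^1}^2$ on the line and $\|u\|_{H^1}^2=E_{u}[u]$, this also gives $\|u(t,\cdot)\|_{L^\infty}\le a+O(\delta)$ and $\|v(t,\cdot)\|_{L^\infty}\le b+O(\delta)$, hence $M(t)=\max_{x}u(t,x)v(t,x)\le ab+O(\delta)$.

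The decisive step is to read Lemma~\ref{lem1.4} as a quadratic inequality in $M(t)$ and to exploit that at the peakon the discriminant vanishes, i.e. $H[\varphi_c,\psi_c]^2-3F[\varphi_c,\psi_c]=0$, so that the associated quadratic is a perfect square. Concretely, I would rewrite \eqref{functionalESI} as
\[
\tfrac{4}{3}\big(M(t)-ab\big)^2\le \tfrac{4}{3}\big(H[u,v]-2ab\big)M(t)-\Big(F[u,v]-\tfrac{4}{3}a^2b^2\Big).
\]
By the previous paragraph the right-hand side is $O(\delta)$ (using $M(t)\le ab+O(\delta)$ to absorb the factor $M(t)$), so that $|M(t)-ab|\le C\delta^{1/2}$.

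Next, plugging $\xi=\xi(t)$ into the identities of Lemma~\ref{lem1.1} and discarding the nonnegative $H^1$-norm terms gives the one-sided bounds
\[
4a\big(u(t,\xi)-a\big)\le E_{u}[u]-2a^2\le C_0\delta,\qquad 4b\big(v(t,\xi)-b\big)\le E_{v}[v]-2b^2\le C_0\delta,
\]
so $u(t,\xi)-a\le O(\delta)$ and $v(t,\xi)-b\le O(\delta)$. Writing $u(t,\xi)=a+p$ and $v(t,\xi)=b+q$, I then have $p,q\le O(\delta)$ together with $ab+aq+bp+pq=u(t,\xi)v(t,\xi)=M(t)\ge ab-C\delta^{1/2}$. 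Since $aq\le O(\delta)$ and $b+q\ge b-O(\delta)>0$ for $\delta$ small, this forces $p(b+q)\ge -C'\delta^{1/2}$, hence $p\ge -C''\delta^{1/2}$; combined with $p\le O(\delta)$ this gives $|u(t,\xi)-a|\le C\delta^{1/2}$, and the symmetric argument gives $|v(t,\xi)-b|\le C\delta^{1/2}$.

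The hard part is this last, genuinely two-component, step. In the scalar Novikov case $\xi$ is the maximum point of $u$ itself, and the single energy identity pins $u(\xi)$ directly; here $\xi(t)$ is only the maximum of the \emph{product} $uv$, so the invariants control $M(t)=u(\xi)v(\xi)$, while the separate $H^1$-energies deliver only upper bounds for $u(\xi)$ and $v(\xi)$. Breaking this coupling, by playing the product lower bound $M(t)\ge ab-C\delta^{1/2}$ against the two energy upper bounds, is precisely where the square-root rate $\delta^{1/2}$ appears and where the positivity $u,v\ge 0$ from Lemma~\ref{nonnegative} (ensuring $M(t)>0$ and $b+q>0$) is essential.
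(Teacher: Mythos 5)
Your proposal follows the paper's route in all of its main steps: conservation of the four functionals together with Lemma~\ref{lem1.5} gives the $O(\delta)$ perturbation of $E_u$, $E_v$, $H$, $F$ for all later times; the quadratic $P(y)=\tfrac43 y^2-\tfrac43 H[u,v]\,y+F[u,v]$, which at the peakon becomes the perfect square $\tfrac43(y-ab)^2$, combined with the dynamical inequality \eqref{functionalESI} of Lemma~\ref{lem1.4} yields $|M(t)-ab|\le C\delta^{1/2}$; and the conclusion is obtained by playing the product lower bound against upper bounds on each factor. Your derivation of the one-sided bounds $u(t,\xi(t))-a\le O(\delta)$ and $v(t,\xi(t))-b\le O(\delta)$ from Lemma~\ref{lem1.1} (discarding the nonnegative $H^1$ term) is a harmless variant of the paper's Sobolev argument $2u^{2}(t,\xi(t))\le\|u_0\|^{2}_{H^{1}(\mathbb{R})}$; both give what is needed.

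There is, however, a genuine flaw in your final decoupling step as written. To pass from $p(b+q)\ge -C'\delta^{1/2}$ to $p\ge -C''\delta^{1/2}$ you must divide by $b+q=v(t,\xi(t))$, and for that you invoke ``$b+q\ge b-O(\delta)>0$''. But a lower bound on $q$ is exactly what the lemma asserts for the $v$-component, so this is circular; worse, the rate claimed there ($O(\delta)$) is stronger than anything the argument can deliver, since the true rate is only $O(\delta^{1/2})$. Mere positivity $v(t,\xi(t))>0$ from Lemma~\ref{nonnegative}, which you do have, is not enough: if $v(t,\xi(t))$ were allowed to be arbitrarily small, the inequality $p(b+q)\ge -C'\delta^{1/2}$ would still permit $p$ to be very negative. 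The repair is immediate and is precisely the paper's argument: divide the product bound by the factor bound \emph{first}. Since $0<u(t,\xi(t))\le a+O(\delta)$ and $u(t,\xi(t))\,v(t,\xi(t))=M(t)\ge ab-C\delta^{1/2}$, one gets directly
\begin{equation*}
v(t,\xi(t))\;\ge\;\frac{ab-C\delta^{1/2}}{a+O(\delta)}\;\ge\; b-C'\delta^{1/2},
\end{equation*}
and symmetrically $u(t,\xi(t))\ge a-C'\delta^{1/2}$. This yields both lower bounds (and, in particular, the uniform positivity of $v(t,\xi(t))$ your division required) without the $p,q$ bookkeeping, which then becomes superfluous.
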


\begin{proof}
In view of the conservation of the functionals $H[u, v]$ and $F[u, v]$, as well as the invariant property established in Lemma \ref{nonnegative} of the strong solution $(u(t, x),\,v(t, x))$, we deduce from Lemma 3.5 that for any $t\in [0,\,T)$,
\begin{equation}\label{H_t}
\Big|H[u(t, \cdot), v(t, \cdot)]-H[\varphi_{c},\psi_{c}]\Big|<2\sqrt{2}(a+b)\delta +6\delta^{2}
\end{equation}
and
\begin{equation}\label{F_t}
\Big|F[u(t, \cdot), v(t, \cdot)]-F[\varphi_{c},\psi_{c}]\Big|<C\delta+O(\delta^{2}),
\end{equation}
where the constants involved depend on $a$, $b$, $\|u_0\|_{H^s}$ and $\|v_0\|_{H^s}$.

Furthermore, we have obtained in Lemma \ref{lem1.4} the following dynamical inequality
\begin{align*}
F[u, v]-\frac{4}{3}M(t)H[u, v]+\frac{4}{3}M(t)^2 \leq 0.
\end{align*}
Then, define the polynomial $P(y)$ to be
\begin{align*}
P(y)=\frac{4}{3}y^{2}-\frac{4}{3}H[u, v]y+F[u, v].
\end{align*}
For the peakons, we have $H[\varphi_{c}, \psi_{c}]=2ab$ and $F[\varphi_{c}, \psi_{c}]=4a^{2}b^{2}/3$, thus the above polynomial with respect to peakons is defined by
\begin{align*}
P_{0}(y)=\frac{4}{3}y^{2}-\frac{4}{3}H[\varphi_{c}, \psi_{c}]y+F[\varphi_{c}, \psi_{c}]=\frac{4}{3}(y-ab)^{2}.
\end{align*}
Note that
\begin{align*}
P(y)=P_0(y)-\frac{4}{3}\big(H[u, v]-H[\varphi_{c}, \psi_{c}]\big)\,y+\big(F[u, v]-F[\varphi_{c}, \psi_{c}]\big).
\end{align*}
It follows that for any $t\in [0,\,T)$
\begin{align*}
\frac{4}{3}\big(M(t)-ab\big)^{2}=P_{0}(M(t))\leq \frac{4}{3}\big|H(u, v)-H(\varphi_{c}, \psi_{c})\big|M(t)+\big|F(u, v)-F(\varphi_{c}, \psi_{c})\big|,
\end{align*}
which together with \eqref{H_t} and \eqref{F_t} leads to
\begin{align}\label{M-ab}
\big|M(t)-ab\big|\leq C\delta^{\frac{1}{2}}.
\end{align}
On the other hand, since
\begin{align*}
2u^{2}(t, \xi(t))\leq \|u_0\|^{2}_{H^{1}(\mathbb{R})}, \quad  2v^{2}(t, \xi(t))\leq \|v_0\|^{2}_{H^{1}(\mathbb{R})},
\end{align*}
we have
\begin{align*}
0<u(t, \xi(t))\leq a+\tilde{C}\delta^{\frac{1}{2}}, \quad  0<v(t, \xi(t))\leq b+\tilde{C}\delta^{\frac{1}{2}}.
\end{align*}
It follows from \eqref{M-ab} and the above estimate that
\begin{align*}
ab-\tilde{C}\delta^{\frac 12}\leq u(\xi)v(\xi)\leq (a+\tilde{C}\delta^{\frac 12})v(\xi),
\end{align*}
which implies
\begin{align*}
v(t, \xi(t))\geq b-C\delta^{\frac{1}{2}}.
\end{align*}
Similarly, we have
\begin{align*}
u(t, \xi(t))\geq a-C\delta^{\frac{1}{2}}.
\end{align*}
This finishes the proof of the lemma.
\end{proof}

\begin{proof}[Proof of Theorem \ref{thm1.1}]
Let $u \in C([0,T); H^{s})$ and $v \in C([0, T); H^{s})$, $s \geq 3$, be the unique strong solution for Cauchy problem \eqref{tcNK-1} of the two-component Novikov system \eqref{tcNK} with initial data $u(0, x)=u_{0}(x)$ and $v(0, x)=v_{0}(x)$. Since $E_u[u]$ and $E_v[v]$ are both conserved, we deduce that
\begin{align}\label{conserved}
E_u[u(t, \cdot)]=E_u[u_0] \quad and \quad E_v[v(t, \cdot)]=E_v[v_0], \quad \forall t\in (0,\,T).
\end{align}

By \eqref{conserved} and Lemma \ref{lem1.1}, we infer that for any $t\in [0,\,T)$
\begin{align*}
\|u(t, \cdot)-\varphi_{c}(\cdot-\xi(t))\|^{2}_{H^{1}(\mathbb{R})}=&E_{u}[u_{0}]-E_{u}[\varphi_{c}]-4a(u(t, \xi(t))-a),\\
\|v(t, \cdot)-\psi_{c}(\cdot-\xi(t))\|^{2}_{H^{1}(\mathbb{R})}=&E_{v}[v_{0}]-E_{v}[\psi_{c}]-4b(v(t, \xi(t))-b).
\end{align*}
Combining the above two dynamical pointwise identities with Lemmas 3.5 and 3.6 , we conclude that for any $t\in [0,\,T)$
\begin{align*}
\|u-\varphi_{c}(\cdot-\xi(t))\|_{H^{1}(\mathbb{R})}<C\delta^{\frac{1}{4}} \quad \mathrm{and} \quad \|v-\psi_{c}(\cdot-\xi(t))\|_{H^{1}(\mathbb{R})}<C\delta^{\frac{1}{4}},
\end{align*}
where as in Lemma 3.6, $C$ is a constant depending on $a$, $b$, $\|u\|_{H^s}$ and $\|v\|_{H^s}$. This thus completes the proof of Theorem \ref{thm1.1}.
\end{proof}

\section{Stability of train-profiles}

In this section, we are devoted to proving orbital stability of train-profiles of two-component peakons $(\varphi_c, \, \psi_c)$. For $\alpha>0$ and $L>0$, we define the following neighborhood of all the sums of N peakons of speed $c_1,...,c_N$ with spatial shifts $x_i$ that satisfy $x_i-x_{i-1} \geq L$,
\begin{align*}
U( \alpha , L) = \Big\{(u , v) & \in H^{1}(\mathbb{R}) \times H^{1}(\mathbb{R}), \\
& \inf_{x_i-x_{i-1} \geq L}{{\Big\| u- \sum_{i=1}^N{\varphi_{c_i}(\cdot - {x}_i)} \Big\|_{H^1}}+{\Big\| v- \sum_{i=1}^N{\psi_{c_i}(\cdot - {x}_i)} \Big\|_{H^1}}} \leq \alpha \Big\}.
\end{align*}

By the continuity of the map $t \mapsto (u(t),v(t))$ from $[0,T[$ into $H^1(\mathbb{R})\times H^1(\mathbb{R})$, to prove Theorem 1.2 it suffices to verify that there exist $ A>0$, $\epsilon_0>0$ and $L_0>0$ such that for any $L>L_0$ and $0<\epsilon<\epsilon_0$, if $(u_{0},v_{0})$ satisfies \eqref{initialdata-1} and for some $0<t_0<T$, we have
\begin{align}\label{uvinitial}
(u(t),v(t)) \in U\LC A({\epsilon}^{ \frac{1}{4}}+L^{- \frac{1}{8}}),\frac{L}{2}\RC \quad {\forall}t \in [0, t_0],
\end{align}
then
\begin{align}\label{uvending}
(u(t_{0}),v(t_{0})) \in U\LC \frac{A}{2}({\epsilon}^{ \frac{1}{4}}+L^{- \frac{1}{8}}),\frac{2L}{3}\RC.
\end{align}
Therefore, in the sequel of this section we will assume \eqref{uvinitial} for some $0< \epsilon < \epsilon_{0}$ and $L>L_{0}$, with $A$, $\epsilon_{0}$ and $L_{0}$ to be specified later.
\subsection{Control of the distance between the peakons}
In this subsection we shall prove that the different bumps of $u$ and $v$ that are individually close to their own peakons and get away from each others as time is increasing. This is crucial in our analysis since we do not know how to manage strong interactions.

\begin{prop}\label{implicitfunction}
 Assume $(u_{0}, v_{0})$ satisfies \eqref{initialdata-1}, and there exist $\alpha_{0} > 0$, $L_{0} > 0$ and $C_{0} > 0$ such that for all $0 < \alpha < \alpha_{0}$, $L >L_{0} >0$. If $(u(t),v(t)) \in U(\alpha, \frac {L}{2})$ on $[0, t_{0}]$ for some $0< t_{0} <T$, then there exist $C^{1} functions$ $\tilde{x}_{1} ,..., \tilde{x}_{N}$ defined on $[0, t_{0}]$ such that
\begin{align}\label{initial-3-1}
\frac{d}{dt}\tilde{x}_{i}=c_{i}+O(\sqrt{\alpha})+O(L^{-1}),\;\; i=1,...,N,
\end{align}
\begin{align}\label{initial-3-2}
{\Big\| u(t)- \sum_{i=1}^N{\varphi_{c_i}(\cdot - \tilde{x}_i(t))} \Big\|_{H^1}}+{\Big\| v(t)- \sum_{i=1}^N{\psi_{c_i}(\cdot - \tilde{x}_i(t))} \Big\|_{H^1}}=O(\sqrt{\alpha}),
\end{align}
\begin{align}\label{initial-3-3}
\tilde{x}_{i}(t)-\tilde{x}_{i-1}(t) \geq \frac{3L}{4} + \frac{c_{i}-c_{i-1}}{2}t, \;\;i=2,...,N.
\end{align}
Moreover, setting $J_{i}:=[y_{i}(t),y_{i-1}(t)]$,\;\; $i=1,...,N$, with
\begin{align}\label{initial-3-4}
y_{1}=-\infty,\quad y_{N+1}=+\infty \quad and \quad y_{i}(t)=\frac{\tilde{x}_{i-1}(t)+\tilde{x}_{i}(t)}{2},\;\; i=2,...,N,
\end{align}
it holds
\begin{align}\label{initial-3-5}
|\tilde{x}_{i}(t)-x_{i}(t)| \leq \frac{L}{12} ,\;\; i=1,...,N,
\end{align}
where $x_{1}(t),...,x_{N}(t)$ are any point such that
\begin{align}\label{initial-3-6}
u(t,x_{i}(t))v(t,x_{i}(t))= \max_{x \in J_{i}(t)}u(t)v(t),\;\; i=1,...,N.
\end{align}
\end{prop}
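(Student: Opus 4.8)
The plan is to construct the modulation functions $\tilde{x}_i$ via the implicit function theorem applied to orthogonality conditions, and then to derive the separation and dynamical estimates from the equations satisfied by $(u,v)$. First I would set up the standard modulation framework: for a profile $(u,v)$ lying in $U(\alpha, L/2)$, I seek shifts $\tilde{x}_1 < \cdots < \tilde{x}_N$ such that the remainder $w_u := u - \sum_i \varphi_{c_i}(\cdot - \tilde{x}_i)$ and $w_v := v - \sum_i \psi_{c_i}(\cdot - \tilde{x}_i)$ are orthogonal (in a suitable inner product adapted to the peakon, e.g. the $H^1$ pairing against $\partial_x \varphi_{c_i}$ and $\partial_x \psi_{c_i}$) to the infinitesimal translations of each bump. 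Define the map
\begin{equation*}
\Upsilon_i(u,v,\vec{x}) = \Big\langle u - \sum_j \varphi_{c_j}(\cdot - x_j),\, \partial_x\varphi_{c_i}(\cdot - x_i)\Big\rangle_{H^1} + \Big\langle v - \sum_j \psi_{c_j}(\cdot - x_j),\, \partial_x\psi_{c_i}(\cdot - x_i)\Big\rangle_{H^1}.
\end{equation*}
At the exact profile $(u,v) = (\sum_j \varphi_{c_j}(\cdot - x_j^0), \sum_j \psi_{c_j}(\cdot - x_j^0))$ with the $x_j^0$ well-separated, the Jacobian $\partial \Upsilon_i / \partial x_k$ is, up to exponentially small off-diagonal terms of size $O(e^{-L/C})$, a nondegenerate diagonal matrix whose entries are the (nonzero) $H^1$-norms of $\partial_x \varphi_{c_i}$ and $\partial_x \psi_{c_i}$. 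Hence for $L > L_0$ large and $\alpha < \alpha_0$ small, the implicit function theorem yields $C^1$ functions $\tilde{x}_i(u,v)$ on $U(\alpha, L/2)$, and composing with $t \mapsto (u(t),v(t))$ gives the desired $C^1$ curves $\tilde{x}_i(t)$. The orthogonality together with the minimizing property and the bound defining $U(\alpha, L/2)$ immediately gives \eqref{initial-3-2}, namely the $O(\sqrt{\alpha})$ control of the remainder.

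Next I would establish the dynamical law \eqref{initial-3-1}. Differentiating the identity $\Upsilon_i(u(t),v(t),\tilde{x}(t)) = 0$ in $t$ and using the weak formulation \eqref{weakform} to substitute for $u_t$ and $v_t$, I obtain a linear system for $\dot{\tilde{x}}_i$ whose leading term, evaluated on the pure peakon $\varphi_{c_i}$ travelling at speed $c_i$, reproduces exactly $c_i$; the corrections come from (a) the remainder $(w_u, w_v)$, which contributes $O(\sqrt{\alpha})$ by \eqref{initial-3-2}, and (b) the interaction between distinct bumps, which by the exponential decay of the peakon tails and the separation $\tilde{x}_i - \tilde{x}_{i-1} \gtrsim L$ contributes $O(e^{-L/C}) = O(L^{-1})$. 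This yields \eqref{initial-3-1}. Integrating \eqref{initial-3-1} and using the initial separation $z_i^0 - z_{i-1}^0 \geq L$ produces the separation lower bound \eqref{initial-3-3}: indeed $\tilde{x}_i - \tilde{x}_{i-1}$ starts at least $\approx L$ and grows at rate $c_i - c_{i-1} + O(\sqrt{\alpha}) + O(L^{-1})$, and since the ordering $c_1 < c_2 < \cdots < c_N$ follows from $0 < a_i < a_{i+1}$, $0 < b_i < b_{i+1}$ and $c_i = a_i b_i$, after absorbing the small errors into the gap $3L/4$ one gets the stated inequality.

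Finally, I would prove \eqref{initial-3-5}, i.e. that the modulation points $\tilde{x}_i$ stay within $L/12$ of any maximizer $x_i$ of the product $u\,v$ on the interval $J_i$. The idea is that on $J_i$ the profile $(u,v)$ is, up to an $O(\sqrt{\alpha})$ error in $H^1$ (hence $O(\sqrt{\alpha})$ in $L^\infty$ by Sobolev embedding), equal to the single peakon pair $(\varphi_{c_i}(\cdot - \tilde{x}_i), \psi_{c_i}(\cdot - \tilde{x}_i))$, whose product $\varphi_{c_i}\psi_{c_i} = a_i b_i\, e^{-2|x - \tilde{x}_i|}$ has a strict, sharply-peaked maximum at $\tilde{x}_i$; the contributions of the other bumps on $J_i$ are $O(e^{-L/C})$. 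A maximizer $x_i$ of $u\,v$ must therefore lie near the peak of this dominant profile, and a quantitative argument comparing the value $u(x_i)v(x_i)$ with $u(\tilde{x}_i)v(\tilde{x}_i)$ forces $|x_i - \tilde{x}_i|$ to be small. The main obstacle I expect is precisely this last step: because the peakon has only a Lipschitz (corner) maximum rather than a smooth nondegenerate one, one cannot invoke quadratic nondegeneracy of the second derivative, and must instead exploit the genuine $e^{-2|x|}$ shape together with the positivity $u,v \geq 0$ and $|u_x| \leq u$, $|v_x| \leq v$ from Lemma \ref{nonnegative} to get a clean pointwise comparison; taking $\alpha_0$ small and $L_0$ large then makes the displacement smaller than $L/12$. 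Throughout, the delicate point is that a single maximizer of the \emph{product} $uv$ must be tracked — this is the feature that replaces the scalar maximizer argument — so the comparison estimate has to be carried out for the product rather than for each component separately.
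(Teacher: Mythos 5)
Your proposal is correct and follows essentially the same route as the paper's proof: modulation via the implicit function theorem applied to translation-orthogonality conditions (the paper pairs in $L^2$ rather than $H^1$, an immaterial difference), differentiation of the orthogonality identity combined with the weak form \eqref{weakform} to obtain the speed law, integration of that law for the separation estimate, and an $L^\infty$ comparison of the product $uv$ against the dominant peakon profile $c_i e^{-2|x-\tilde{x}_i(t)|}$ to localize the maximizer within $L/12$ (your worry about the corner maximum is unfounded for exactly the reason you then give: only an $L/12$-scale localization is needed, and the exponential drop $c_i e^{-L/6}$ beats all error terms). The one step you label as "immediate" deserves a word: the $O(\sqrt{\alpha})$ rather than $O(\alpha)$ in \eqref{initial-3-2} arises because peakon translation is only H\"older-$\frac{1}{2}$ continuous in $H^1$, i.e. $\|\varphi_{c}(\cdot)-\varphi_{c}(\cdot-y)\|_{H^1}\sim |y|^{1/2}$, which is precisely how the paper converts the bound $\sum_i |y_i| = O(\alpha)$ from the implicit function theorem into the stated estimate.
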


To prove this proposition we use a modulation argument. The strategy is to construct $C^{1}$ functions $\tilde{x}_{1} ,..., \tilde{x}_{N}$ on $[0,t_{0}]$ satisfying a suitable orthogonality condition, see \eqref{orthogonality}. Thanks to this orthogonality condition, we will be able to prove that the speed of the $\tilde{x}_{i}$ stays close to $c_{i}$ on $[0,t_{0}]$.

\begin{proof}
For $Z=(z_{1},...,z_{N}) \in \mathbb{R}^{N}$ satisfying $z_{i}-z_{i-1}>L/2$, we set
\begin{align}\label{sumofsolitons}
R_{Z}(\cdot)=\sum_{i=1}^{N}\varphi_{c_{i}}(\cdot - z_{i}) \quad and \quad S_{Z}(\cdot)=\sum_{i=1}^{N}\psi_{c_{i}}(\cdot - z_{i}).
\end{align}
For $\alpha_{0} > 0$, $L_{0} > 0$, we define the function
\begin{align*}
Y:(-\alpha ,\alpha )^{N} \times B_{H^{1} \times H^{1}}((R_{Z},S_{Z}),\alpha) \mapsto {\mathbb{R}}^{N}, \\
(y_{1},...,y_{N},u,v) \mapsto (Y^{1}(y_{1},...,y_{N},u,v),...,Y^{N}(y_{1},...,y_{N},u,v))
\end{align*}
with
\begin{align*}
Y^{i}(y_{1},...,y_{N},u,v)= &\int_{\mathbb{R}}\bigg{(}(u-\sum_{i=1}^{N}\varphi_{c_{j}}(\cdot - z_{j} -y_{j})) \partial_{x}\varphi_{c_{i}}(\cdot - z_{i} -y_{i}) \\
&\qquad \quad+ (v-\sum_{i=1}^{N}\psi_{c_{j}}(\cdot - z_{j} -y_{j})) \partial_{x}\psi_{c_{i}}(\cdot - z_{i} -y_{i})\bigg{)},
\end{align*}
$Y$ is clearly of class $C^{1}$. For $i=1,...,N,$
\begin{align*}
\frac{\partial{Y}^{i}}{\partial y_{i}}(y_{1},...,y_{N},u,v)=&\int_{\mathbb{R}}\bigg{(}(u_{x}-\sum_{i \ne j}^{N}\partial_{x} \varphi_{c_{j}}(\cdot - z_{j} -y_{j})) \partial_{x}\varphi_{c_{i}}(\cdot - z_{i} -y_{i}) \\
&\qquad\qquad + (v_{x}-\sum_{i \ne j}^{N}\partial_{x} \psi_{c_{j}}(\cdot - z_{j} -y_{j})) \partial_{x}\psi_{c_{i}}(\cdot - z_{i} -y_{i})\bigg{)},
\end{align*}
and $\forall j\ne i,$
\begin{align*}
\frac{\partial{Y}^{i}}{\partial y_{j}}(y_{1},...,y_{N},u,v)=&\int_{\mathbb{R}}\bigg{(}\partial_{x} \varphi_{c_{j}}(\cdot - z_{j} -y_{j}) \partial_{x}\varphi_{c_{i}}(\cdot - z_{i} -y_{i}) \\
&\qquad\qquad + \partial_{x} \psi_{c_{j}}(\cdot - z_{j} -y_{j}) \partial_{x}\psi_{c_{i}}(\cdot - z_{i} -y_{i})\bigg{)}.
\end{align*}
Hence
\begin{align*}
\frac{\partial{Y}^{i}}{\partial y_{i}}(0,...,0,R_{Z},S_{Z})=\| \partial_{x}\varphi_{c_{i}} \|_{L^2}^2+\| \partial_{x}\psi_{c_{i}} \|_{L^2}^2 \geq a_{1}^2+b_{1}^{2},
\end{align*}
and, for $\forall j\ne i$, using the exponential decay of $\varphi_{c}$ and that $z_{i}-z_{i-1} >L $ we infer  for $L_{0}$ large enough that (recall that $L>L_{0}$),
\begin{equation*}
\begin{aligned}
\frac{\partial{Y}^{i}}{\partial y_{j}}(0,...,0,R_{Z},S_{Z})=&\int_{\mathbb{R}}\bigg{(}\partial_{x} \varphi_{c_{j}}(\cdot - z_{j} ) \partial_{x}\varphi_{c_{i}}(\cdot - z_{i} ) + \partial_{x} \psi_{c_{j}}(\cdot - z_{j}) \partial_{x}\psi_{c_{i}}(\cdot - z_{i})\bigg{)}\\
&\leq O(e^{-\frac{L}{4}}).
\end{aligned}
\end{equation*}
We conclude that, for $L>0$ large enough, $D_{(y_{1},...,y_{N})}Y(0,...,0,R_{Z},S_{Z})=D+P$ where $D$ is an invertible diagonal matrix with $\|D^{-1}\|\leq (a^{2}_{1}+b^{2}_{1})^{-n}$ and $\|P\|\leq O(e^{-L/4})$. Hence there exists $L_{0}>0$ such that for $L>L_{0}$, $D_{(y_{1},...,y_{N})}Y(0,...,0,R_{Z},S_{Z})$ is invertible with an inverse matrix of norm smaller than $2(a^{2}_{1}+b^{2}_{1})^{-n}$. The implicit function theorem implies that there exists $\beta_{0}>0$ and $C^{1}$ functions $y_{1},y_{2},...,y_{N}$ from $B_{H^1 \times H^1}((R_{Z},S_{Z}),\beta_{0})$ to a neighborhood of $(0,0,...,0)$ which uniquely determined such that
\begin{align*}
Y(y_{1},...,y_{N},u,v)=0 \quad {\rm for}\;\;  {\rm  all} \quad (u,v) \in B((R_{Z},S_{Z}),\beta_{0}).
\end{align*}
In particular, there exists $C_{0}>0$ such that if $(u,v) \in B((R_{Z},S_{Z}),\beta)$, with $0<\beta \leq \beta_{0}$, then
\begin{align}\label{implicitfunc}
\sum^{N}_{i=1}{\big{|}y_{i}(u,v)\big{|}}\leq C_{0}\beta.
\end{align}
Note that $\beta_{0}$ and $C_{0}$ depend on only $a_{1}$, $b_{1}$ and $L_{0}$ and not on the point $(z_{1},...,z_{N})$. For $(u,v) \in B((R_{Z},S_{Z}),\beta_{0})$ we set $\tilde{x}_{i}(u,v)=z_{i}+y_{i}(u,v)$. Assuming that $\beta_{0}\leq L_{0}/(8C_{0})$, $\tilde{x}_{1},...,\tilde{x}_{N}$ are thus $C^{1}$ functions on $B((R_{Z},S_{Z}),\beta)$ satisfying
\begin{align}\label{dist-1}
\tilde{x}_{j}(u,v)-\tilde{x}_{j-1}(u,v)>\frac{L}{2}-2C_{0}\beta>\frac{L}{4}.
\end{align}
For $L>L_{0}$ and $0<\alpha<\alpha_{0}<\beta_{0}/2$ to be chosen later, we define the modulation of $(u,v) \in U(\alpha,L/2)$ in the following way, the trajectory of $(u,v)$ is covered by a finite number of open balls:
\begin{align*}
\Big{\{}\big{(}u(t),v(t)\big{)}, t \in [0,t_{0}]\Big{\}} \subset \bigcup_{k=1,...,M}B\big{(}(R_{Z^{k}},S_{Z^{k}}),2\alpha\big{)}.
\end{align*}
It is worth noticing that, since $0<\alpha<\alpha_{0}<\beta_{0}/2$, the functions $\tilde{x}_{i}(u,v)$ are uniquely determined for $(u,v)\in B((R_{Z^{k}},S_{Z^{k}}),2\alpha)\bigcap B((R_{Z^{k'}},S_{Z^{k'}}),2\alpha)$. We can thus define the functions $t\mapsto \tilde{x}_{i}(t)$ on $[0,t_{0}]$ by setting $\tilde{x}_{i}(t)=\tilde{x}_{i}(u(t),v(t))$. By construction
\begin{eqnarray}\label{orthogonality}
\begin{aligned}
\int_{\mathbb{R}} &\Big{(}\Big{(}u(t,\cdot)-\sum^{N}_{j=1}\varphi_{c_{j}}(\cdot-\tilde{x}_{j}(t))\Big{)}\partial_{x}\varphi_{c_{i}}(\cdot-\tilde{x}_{i}(t))\\
&\qquad \qquad\qquad +\Big{(}v(t,\cdot)-\sum^{N}_{j=1}\psi_{c_{j}}(\cdot-\tilde{x}_{j}(t))\Big{)}\partial_{x}\psi_{c_{i}}(\cdot-\tilde{x}_{i}(t))\Big{)}dx=0.
\end{aligned}
\end{eqnarray}
Moreover, on account of \eqref{implicitfunc} and the fact that $\varphi''_{c}$ and $\psi''_{c}$ are the sum of an $L^{1}$ function and a Dirac mass, we claim
\begin{align}\label{preestimate}
\Big{\|}\Big{(}u(t),v(t)\Big{)}-\Big{(}R_{\tilde{X}(t)},S_{\tilde{X}(t)}\Big{)}\Big{\|}_{H^{1}\times H^{1}}\leq O(\sqrt{\alpha}), \quad \forall t \in [0,t_{0}].
\end{align}
Indeed, one can calculate
\begin{align*}
&\Big{\|}\Big{(}u(t),v(t)\Big{)}-\Big{(}R_{\tilde{X}(t)},S_{\tilde{X}(t)}\Big{)}\Big{\|}_{H^{1}\times H^{1}} \triangleq \Big{\|}u(t)-R_{\tilde{X}(t)}\Big{\|}_{H^{1}}+\Big{\|}v(t)-S_{\tilde{X}(t)}\Big{\|}_{H^{1}}\\
&\leq \Big{\|}u(t)-R_{{Z^{k}}(t)}\Big{\|}_{H^{1}}+\Big{\|}R_{{Z^{k}}(t)}-R_{\tilde{X}(t)}\Big{\|}_{H^{1}}+\Big{\|}v(t)-S_{{Z^{k}}(t)}\Big{\|}_{H^{1}}+\Big{\|}S_{{Z^{k}}(t)}-S_{\tilde{X}(t)}\Big{\|}_{H^{1}}\\
&\leq
\alpha+\hat{C}\sum^{N}_{i=1}\Big{\|}\varphi(\cdot-z^{k}_{i})-\varphi(\cdot-z^{k}_{i}-y_{i}(u,v))\Big{\|}_{H^{1}}\\
&\leq \alpha+\hat{C}\sum^{N}_{i=1}\Big{(}2(1-e^{-y_{i}})^{2}+2y_{i}+O(y^{2}_{i})\Big{)}^{\frac{1}{2}}\leq O(\sqrt{\alpha}).
\end{align*}
Let us now prove that the speed of $\tilde{x}_{i}$ stays close to $c_{i}$. We set
\begin{align*}
&R_{j}(t)=\varphi_{c_{j}}(\cdot-\tilde{x}_{j}(t)) \quad \mathrm{and} \quad \tilde{u}(t)=u(t)-\sum^{N}_{j=1}R_{j}(t)=u(t,\cdot)-R_{\tilde{X}(t)},\\
&S_{j}(t)=\psi_{c_{j}}(\cdot-\tilde{x}_{j}(t)) \quad \mathrm{and} \quad \tilde{v}(t)=v(t)-\sum^{N}_{j=1}S_{j}(t)=u(t,\cdot)-S_{\tilde{X}(t)}.
\end{align*}
Differentiating \eqref{orthogonality} with respect to $t$ we get
\begin{align*}
\int_{\mathbb{R}}\Big{(}\tilde{u}_{t}\partial_{x}R_{i}+\tilde{v}_{t}\partial_{x}S_{i}\Big{)}=\dot{\tilde{x}}_{i}\big{(}<\partial^{2}_{x}R_{i},\tilde{u}>_{H^{-1},H^{1}}+<\partial^{2}_{x}S_{i},\tilde{v}>_{H^{-1},H^{1}}\big{)},
\end{align*}
and thus
\begin{align}\label{speedestimate}
\bigg{|}\int_{\mathbb{R}}\Big{(}\tilde{u}_{t}\partial_{x}R_{i}+\tilde{v}_{t}\partial_{x}S_{i}\Big{)}\bigg{|}\leq |\dot{\tilde{x}}_{i}-c_{i}|\Big(O\big{(}\|\tilde{u}\|_{H^{1}}\big{)}+O\big{(}\|\tilde{v}\|_{H^{1}}\big{)}\Big)+\Big(O\big{(}\|\tilde{u}\|_{H^{1}}\big{)}+O\big{(}\|\tilde{v}\|_{H^{1}}\big{)}\Big).
\end{align}
Substituting $u$ by $\tilde{u}+\sum^{N}_{j=1}R_{j}(t)$ and $v$ by $\tilde{v}+\sum^{N}_{j=1}S_{j}(t)$ into  \eqref{weakform} and using
\begin{equation}
\left\{
\begin{aligned}
&\partial_{t}R_{i}+\big{(}\dot{\tilde{x}}_{i}(t)-c_{i}\big{)}\partial_{x}R_{i}+R_{i}S_{i}\partial_{x}R_{i}+P_{x}\ast\Big{(}\frac{1}{2}(\partial_{x}R_{i})^{2}S_{i}+R_{i}(\partial_{x}R_{i})(\partial_{x}S_{i})+R^{2}_{i}S_{i}\Big{)}\\
&+P\ast\Big{(}\frac{1}{2}(\partial_{x}R_{i})^{2}\partial_{x}S_{i}\Big{)}=0,\\
&\partial_{t}S_{i}+\big{(}\dot{\tilde{x}}_{i}(t)-c_{i}\big{)}\partial_{x}S_{i}+S_{i}R_{i}\partial_{x}S_{i}+P_{x}\ast\Big{(}\frac{1}{2}(\partial_{x}S_{i})^{2}R_{i}+S_{i}(\partial_{x}S_{i})(\partial_{x}R_{i})+S^{2}_{i}R_{i}\Big{)}\\
&+P\ast\Big{(}\frac{1}{2}(\partial_{x}S_{i})^{2}\partial_{x}R_{i}\Big{)}=0,
\end{aligned}
\right.
\end{equation}
we infer that $(\tilde{u},\tilde{v})$ satisfies on $[0,t_{0}]$ that
\begin{equation*}\label{speedequation}
\begin{aligned}
&\tilde{u}_{t}-\sum^{N}_{i=1}(\dot{\tilde{x}}_{i}(t)-c_{i})\partial_{x}R_{i}+\big{(}\tilde{u}+\sum^{N}_{j=1}R_{j}\big{)}\big{(}\tilde{v}+\sum^{N}_{j=1}S_{j}\big{)}\Big{(}\tilde{u}_{x}+\sum^{N}_{j=1}R_{jx}\Big{)}-\sum^{N}_{j=1}(R_{j}S_{j}\partial_{x}R_{j})\\
&\quad +P_{x}\ast\bigg{(}\frac{1}{2}\Big{(}\tilde{u}_{x}+\sum^{N}_{j=1}R_{jx}\Big{)}^{2}(\tilde{v}+\sum^{N}_{j=1}S_{j})
+(\tilde{u}+\sum^{N}_{j=1}R_{j})\Big{(}\tilde{u}_{x}+\sum^{N}_{j=1}R_{jx}\Big{)}\Big{(}\tilde{v}_{x}+\sum^{N}_{j=1}S_{jx}\Big{)}\\
&\qquad\quad+(\tilde{u}+\sum^{N}_{j=1}R_{j})^{2}(\tilde{v}+\sum^{N}_{j=1}S_{j})-\frac{1}{2}\sum^{N}_{j=1}R^{2}_{jx}S_{j}-\sum^{N}_{j=1}R_{j}R_{jx}S_{jx}-\sum^{N}_{j=1}R^{2}_{j}S_{j}\bigg{)}\\
&\qquad\qquad+P\ast \bigg{(}\frac{1}{2}\big{(}\tilde{u}_{x}+\sum^{N}_{j=1}R_{jx}\big{)}^{2}\big{(}\tilde{v}_{x}+\sum^{N}_{j=1}S_{jx}\big{)}-\frac{1}{2}\sum^{N}_{j=1}R^{2}_{jx}S_{jx}\bigg{)}=0,\\
\end{aligned}
\end{equation*}
\begin{eqnarray}\label{speedequation}
\begin{aligned}
&\tilde{v}_{t}-\sum^{N}_{i=1}(\dot{\tilde{x}}_{i}(t)-c_{i})\partial_{x}S_{i}+\big{(}\tilde{v}+\sum^{N}_{j=1}S_{j}\big{)}\big{(}\tilde{u}+\sum^{N}_{j=1}R_{j}\big{)}\Big{(}\partial_{x}\big{(}\tilde{v}+\sum^{N}_{j=1}S_{j}\big{)}\Big{)}-\sum^{N}_{j=1}(S_{j}R_{j}\partial_{x}S_{j})\\
&\quad+P_{x}\ast\bigg{(}\frac{1}{2}\Big{(}\tilde{v}_{x}+\sum^{N}_{j=1}S_{jx}\Big{)}^{2}(\tilde{u}+\sum^{N}_{j=1}R_{j})
+(\tilde{v}+\sum^{N}_{j=1}S_{j})\Big{(}\tilde{v}_{x}+\sum^{N}_{j=1}S_{jx}\Big{)}\Big{(}\tilde{u}_{x}+\sum^{N}_{j=1}R_{jx}\Big{)}\\
&\qquad+(\tilde{v}+\sum^{N}_{j=1}S_{j})^{2}(\tilde{u}+\sum^{N}_{j=1}R_{j})-\frac{1}{2}\sum^{N}_{j=1}S^{2}_{jx}R_{j}-\sum^{N}_{j=1}S_{j}S_{jx}R_{jx}-\sum^{N}_{j=1}S^{2}_{j}R_{j}\bigg{)}\\
&\qquad\qquad+P\ast \bigg{(}\frac{1}{2}\big{(}\tilde{v}_{x}+\sum^{N}_{j=1}S_{jx}\big{)}^{2}\big{(}\tilde{u}_{x}+\sum^{N}_{j=1}R_{jx}\big{)}-\frac{1}{2}\sum^{N}_{j=1}S^{2}_{jx}R_{jx}\bigg{)}=0.
\end{aligned}
\end{eqnarray}
Taking the $L^{2}$ scalar product with $\partial_{x}R_{i}$ in the first equation of \eqref{speedequation} and $\partial_{x}S_{i}$ in the second equation of \eqref{speedequation}, summing up the resulting equations, integrating by parts, and using the decay of $R_{j}$, $S_{j}$ and their first derivative, we claim
\begin{align}\label{speedfinestimate}
|\dot{\tilde{x}}_{i}(t)-c_{i}|\leq O(\sqrt{\alpha})+O(e^{-\frac{L}{8}}).
\end{align}
Indeed, according to the above, we obtain that
\begin{align*}
&(\dot{\tilde{x}}_{i}-c_{i})\big{(}<\partial^{2}_{x}R_{i},\tilde{u}>_{H^{-1},H^{1}}+<\partial^{2}_{x}S_{i},\tilde{v}>_{H^{-1},H^{1}}\big{)}\\
&+c_{i}\big{(}<\partial^{2}_{x}R_{i},\tilde{u}>_{H^{-1},H^{1}}+<\partial^{2}_{x}S_{i},\tilde{v}>_{H^{-1},H^{1}}\big{)}-(\dot{\tilde{x}}_{i}-c_{i})\int_{\mathbb{R}}\bigg{(}(\partial_{x}R_{i})^{2}+(\partial_{x}S_{i})^{2}\bigg{)}dx\\
&=\sum_{i\ne j}(\dot{\tilde{x}}_{j}-c_{j})\Big{(}\int_{\mathbb{R}}\big{(}\partial_{x}R_{i}\partial_{x}R_{j}+\partial_{x}S_{i}\partial_{x}S_{j}\big{)}dx\Big{)}\\
&\quad -\int_{\mathbb{R}}\Bigg{(}\bigg{(}\big{(}\tilde{u}+\sum^{N}_{j=1}R_{j}\big{)}\big{(}\tilde{v}+\sum^{N}_{j=1}S_{j}\big{)}\Big{(}\tilde{u}_{x}+\sum^{N}_{j=1}R_{jx}\Big{)}-\sum^{N}_{j=1}(R_{j}S_{j}\partial_{x}R_{j})\bigg{)}\partial_{x}R_{i}\Bigg{)}dx\\
&\quad -\int_{\mathbb{R}}\Bigg{(}\bigg{(}\big{(}\tilde{v}+\sum^{N}_{j=1}S_{j}\big{)}\big{(}\tilde{u}+\sum^{N}_{j=1}R_{j}\big{)}\Big{(}\partial_{x}\big{(}\tilde{v}+\sum^{N}_{j=1}S_{j}\big{)}\Big{)}-\sum^{N}_{j=1}(S_{j}R_{j}\partial_{x}S_{j})\bigg{)}\partial_{x}S_{i}\Bigg{)}dx\\
&\quad +\int_{\mathbb{R}}\Bigg{(}P\ast\bigg{(}\frac{1}{2}\Big{(}\tilde{u}_{x}+\sum^{N}_{j=1}R_{jx}\Big{)}^2(\tilde{v}+\sum^{N}_{j=1}S_{j})
+(\tilde{u}+\sum^{N}_{j=1}R_{j})\Big{(}\tilde{u}_{x}+\sum^{N}_{j=1}R_{jx}\Big{)}\Big{(}\tilde{v}_{x}+\sum^{N}_{j=1}S_{jx}\Big{)}\\
&\quad +(\tilde{u}+\sum^{N}_{j=1}R_{j})^{2}(\tilde{v}+\sum^{N}_{j=1}S_{j})-\frac{1}{2}\sum^{N}_{j=1}R^{2}_{jx}S_{j}-\sum^{N}_{j=1}R_{j}R_{jx}S_{jx}-\sum^{N}_{j=1}R^{2}_{j}S_{j}\bigg{)}\partial^{2}_{x}R_{i}\Bigg{)}dx\\
&\quad +\int_{\mathbb{R}}\Bigg{(}P\ast\bigg{(}\frac{1}{2}\Big{(}\tilde{v}_{x}+\sum^{N}_{j=1}S_{jx}\Big{)}^2(\tilde{u}+\sum^{N}_{j=1}R_{j})
+(\tilde{v}+\sum^{N}_{j=1}S_{j})\Big{(}\tilde{v}_{x}+\sum^{N}_{j=1}S_{jx}\Big{)}\Big{(}\tilde{u}_{x}+\sum^{N}_{j=1}R_{jx}\Big{)}\\
&\quad +(\tilde{v}+\sum^{N}_{j=1}S_{j})^{2}(\tilde{u}+\sum^{N}_{j=1}R_{j})-\frac{1}{2}\sum^{N}_{j=1}S^{2}_{jx}R_{j}-\sum^{N}_{j=1}S_{j}S_{jx}R_{jx}-\sum^{N}_{j=1}S^{2}_{j}R_{j}\bigg{)}\partial^{2}_{x}S_{i}\Bigg{)}dx\\
&\quad -\int_{\mathbb{R}}\Bigg{(}P\ast\bigg{(}\frac{1}{2}\big{(}\tilde{u}_{x}+\sum^{N}_{j=1}R_{jx}\big{)}^{2}\big{(}\tilde{v}_{x}+\sum^{N}_{j=1}S_{jx}\big{)}-\frac{1}{2}\sum^{N}_{j=1}R^{2}_{jx}S_{jx}\bigg{)}\partial_{x}R_{i}\Bigg{)}dx\\
&\quad -\int_{\mathbb{R}}\Bigg{(}P\ast\bigg{(}\frac{1}{2}\big{(}\tilde{v}_{x}+\sum^{N}_{j=1}S_{jx}\big{)}^{2}\big{(}\tilde{u}_{x}+\sum^{N}_{j=1}R_{jx}\big{)}-\frac{1}{2}\sum^{N}_{j=1}S^{2}_{jx}R_{jx}\bigg{)}\partial_{x}S_{i}\Bigg{)}dx.
\end{align*}
For every term, we have the following estimates
\begin{eqnarray*}
\begin{aligned}
<\partial^{2}_{x}R_{i},\tilde{u}>_{H^{-1},H^{1}}&=\int_{\mathbb{R}}\partial^{2}_{x}R_{i}\tilde{u}\,dx=\int_{\mathbb{R}}\left[R_{i}-2a_i\delta\left(x-\tilde{x}_{i}(t)\right))\right]\tilde{u}\,dx\\
&=\int_{\mathbb{R}}R_{i}\tilde{u}\,dx-2a_{i}\tilde{u}(\tilde{x}_{i}(t))\leq \|\tilde{u}\|_{L^\infty}\Big(\Big|\int_{\mathbb{R}}R_{i}\,dx\Big|+2a_i\Big)\leq O(\sqrt{\alpha}).
\end{aligned}
\end{eqnarray*}
Similarly,
\begin{align*}
<\partial^{2}_{x}S_{i},\tilde{v}>_{H^{-1},H^{1}}\leq O(\sqrt{\alpha}).
\end{align*}
For the term, $\int_{\mathbb{R}}\partial_{x}R_{i}\partial_{x}R_{j}\,dx$, we find
\begin{eqnarray*}
\begin{aligned}
\int_{\mathbb{R}}\partial_{x}R_{i}\partial_{x}R_{j}\,dx&=-\int_{\mathbb{R}}\partial_x^2 R_{i}R_{j}\,dx=-\int_{\mathbb{R}}\left(R_{i}-2a_{i}\delta(x-\tilde{x}_{i}(t))\right)R_{j}\,dx\\
&=-\int_{\mathbb{R}}R_{i}R_{j}\,dx+2a_{i}R_{j}(\tilde{x}_{i}(t)) \leq O(e^{-\frac L4}).
\end{aligned}
\end{eqnarray*}
Similarly,
\begin{align*}
\int_{\mathbb{R}}\partial_{x}S_{i}\partial_{x}S_{j}\,dx \leq O(e^{-\frac L4}).
\end{align*}

\begin{lemma}\label{implicit-1}
Assume $(\tilde{u},\tilde{v})$ satisfies \eqref{preestimate}, then we have
\begin{equation*}
\int_{\mathbb{R}}\Big[(\tilde{u}+\sum^{N}_{j=1}R_{j})(\tilde{v}+\sum^{N}_{j=1}S_{j})(\tilde{u}_{x}+\sum^{N}_{j=1}R_{jx})-\sum^{N}_{j=1}R_{j}S_{j}R_{jx}\Big]R_{ix}\,dx \leq O(\sqrt{\alpha})+O(e^{-\frac L4})
\end{equation*}
and
\begin{equation*}
\int_{\mathbb{R}}\Big[(\tilde{u}+\sum^{N}_{j=1}R_{j})(\tilde{v}+\sum^{N}_{j=1}S_{j})(\tilde{v}_{x}+\sum^{N}_{j=1}S_{jx})-\sum^{N}_{j=1}R_{j}S_{j}S_{jx}\Big]S_{ix}\,dx \leq O(\sqrt{\alpha})+O(e^{-\frac L4}).
\end{equation*}
\end{lemma}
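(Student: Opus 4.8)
The plan is to expand the triple product inside the bracket and split the resulting terms into a purely peakon part and a part carrying at least one remainder factor. Writing $\mathcal{R}=\sum_{j=1}^{N}R_j$ and $\mathcal{S}=\sum_{j=1}^{N}S_j$ so that $\mathcal{R}_x=\sum_{j=1}^{N}R_{jx}$, the integrand of the first estimate is $B\,R_{ix}$ with
\begin{equation*}
B=(\tilde{u}+\mathcal{R})(\tilde{v}+\mathcal{S})(\tilde{u}_x+\mathcal{R}_x)-\sum_{j=1}^{N}R_jS_jR_{jx}
=\Big(\mathcal{R}\,\mathcal{S}\,\mathcal{R}_x-\sum_{j=1}^{N}R_jS_jR_{jx}\Big)+\mathcal{E},
\end{equation*}
where $\mathcal{E}$ gathers the seven remaining terms of the expansion, each containing at least one of the factors $\tilde{u}$, $\tilde{v}$, $\tilde{u}_x$. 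The two displayed estimates are symmetric under interchanging the roles of $(u,R)$ and $(v,S)$, so I would only treat the first one in detail.

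For the remainder part $\int_{\mathbb{R}}\mathcal{E}R_{ix}\,dx$, I would bound each summand by H\"older's inequality, using that the profiles $R_j,S_j,R_{jx}$ and $R_{ix}$ are bounded in $L^\infty\cap L^2$ by constants depending only on the fixed amplitudes $a_j,b_j$ (recall that $|\varphi_{c}'|\le\varphi_c$), while the Sobolev embedding $H^1(\mathbb{R})\hookrightarrow L^\infty(\mathbb{R})$ together with \eqref{preestimate} gives $\|\tilde{u}\|_{L^\infty}+\|\tilde{u}\|_{H^1}+\|\tilde{v}\|_{H^1}=O(\sqrt{\alpha})$. Since every term of $\mathcal{E}$ is at least linear in the remainders, the linear terms contribute $O(\sqrt{\alpha})$ and the quadratic and cubic ones contribute $O(\alpha)$ and $O(\alpha^{3/2})$; hence $\int_{\mathbb{R}}\mathcal{E}R_{ix}\,dx=O(\sqrt{\alpha})$.

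For the interaction part, I would expand
\begin{equation*}
\mathcal{R}\,\mathcal{S}\,\mathcal{R}_x-\sum_{j=1}^{N}R_jS_jR_{jx}=\sum_{(j,k,l)\ \text{not all equal}}R_jS_kR_{lx},
\end{equation*}
so that every surviving term involves at least two distinct centres among $\tilde{x}_j,\tilde{x}_k,\tilde{x}_l$. Multiplying by $R_{ix}$ and integrating, each term is an integral of a product of translated copies of $e^{-|\cdot|}$ (and their derivatives, bounded in modulus by the profiles themselves) centred at points pairwise separated by at least $L/4$ thanks to \eqref{dist-1}. The elementary overlap bound $\int_{\mathbb{R}}e^{-p|x-a|}e^{-q|x-b|}\,dx=O\big(e^{-\min(p,q)|a-b|}\big)$ then shows that each of the finitely many terms is $O(e^{-L/4})$, whence $\int_{\mathbb{R}}\big(\mathcal{R}\,\mathcal{S}\,\mathcal{R}_x-\sum_{j}R_jS_jR_{jx}\big)R_{ix}\,dx=O(e^{-L/4})$. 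Adding the two contributions yields the claimed bound $O(\sqrt{\alpha})+O(e^{-L/4})$. The only point requiring care is that the peakon derivatives are merely bounded measurable functions with a jump at each crest; however, since no term in this lemma involves a second derivative of a profile, all integrals stay at the $L^2$--$L^\infty$ level and no Dirac mass ever appears, so the exponential overlap estimate for well-separated peakons is the real content of the argument.
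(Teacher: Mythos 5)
Your proposal is correct and follows essentially the same route as the paper: split the bracket into terms carrying at least one remainder factor $\tilde{u},\tilde{v},\tilde{u}_x$ (bounded via H\"older, Sobolev embedding and \eqref{preestimate} by $O(\sqrt{\alpha})$) plus the pure peakon cross terms $\sum_{(j,k,l)\ \text{not all equal}}R_jS_kR_{lx}$ (bounded by the exponential overlap of well-separated profiles by $O(e^{-L/4})$). The only cosmetic difference is that you expand the triple product fully into eight terms while the paper groups it telescopically into four blocks; also note that for equal decay rates the overlap integral $\int e^{-|x-a|}e^{-|x-b|}\,dx$ carries a factor $(1+|a-b|)$, a harmless polynomial loss that the paper's own $O(e^{-L/4})$ bookkeeping glosses over in exactly the same way.
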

\begin{proof}
We calculate
\begin{align*}
&\int_{\mathbb{R}}\Big[(\tilde{u}+\sum^{N}_{j=1}R_{j})(\tilde{v}+\sum^{N}_{j=1}S_{j})(\tilde{u}_{x}+\sum^{N}_{j=1}R_{jx})-\sum^{N}_{j=1}R_{j}S_{j}R_{jx}\Big]R_{ix}\,dx\\
=&\int_{\mathbb{R}}\Big[\tilde{u}(\tilde{v}+\sum^{N}_{j=1}S_{j})(\tilde{u}_{x}+\sum^{N}_{j=1}R_{jx})+\sum^{N}_{j=1}R_{j}\tilde{v}(\tilde{u}_{x}+\sum^{N}_{j=1}R_{jx})+\sum^{N}_{j=1}S_{j}\sum^{N}_{j=1}R_{j}\tilde{u}_{x}\\
&\qquad\qquad+ \sum^{N}_{j=1}S_{j}\sum^{N}_{j=1}R_{j}\sum^{N}_{j=1}R_{jx}-\sum^{N}_{j=1}R_{j}S_{j}R_{jx}\Big]R_{ix}\,dx\\
\leq &\Big|\int_{\mathbb{R}}\tilde{u}(\tilde{v}+\sum^{N}_{j=1}S_{j})(\tilde{u}_{x}+\sum^{N}_{j=1}R_{jx})R_{ix}\,dx\Big|+\Big|\int_{\mathbb{R}}\sum^{N}_{j=1}R_{j}\tilde{v}(\tilde{u}_{x}+\sum^{N}_{j=1}R_{jx})R_{ix}\,dx\Big|\\
&\quad+\Big|\int_{\mathbb{R}}\sum^{N}_{j=1}S_{j}\sum^{N}_{j=1}R_{j}\tilde{u}_{x}R_{ix}\,dx\Big|+\Big|\int_{\mathbb{R}}\Big[\sum^{N}_{j=1}S_{j}\sum^{N}_{j=1}R_{j}\sum^{N}_{j=1}R_{jx}-\sum^{N}_{j=1}R_{j}S_{j}R_{jx}\Big]R_{ix}\,dx\Big|\\
\leq &\|\tilde{u}\|_{L^{\infty}}\|u\|_{L^{\infty}}\|v\|_{L^{\infty}}\Big|\int_{\mathbb{R}}R_{ix}\,dx\Big|+\sum^{N}_{j=1}\|R_{j}\|_{L^{\infty}}\|\tilde{v}\|_{L^{\infty}}\|u\|_{L^{\infty}}\Big|\int_{\mathbb{R}}R_{ix}\,dx\Big|\\
&\quad+\Big(\sum^{N}_{j=1}\|R_{j}\|_{L^{\infty}}\Big)\Big(\sum^{N}_{j=1}\|S_{j}\|_{L^{\infty}}\Big)\Big(\int_{\mathbb{R}}(\tilde{u}_{x})^2\,dx\Big)^{\frac 12}\Big(\int_{\mathbb{R}}R_{ix}^2\,dx\Big)^{\frac 12}\\
&\quad\quad +\Big|\int_{\mathbb{R}}\Big[\sum_{i\ne j\,or\,i\ne k\,or\,j\ne k}R_{i}S_{j}R_{kx}\Big]R_{ix}\,dx\Big|\leq  O(\sqrt{\alpha})+O(e^{-\frac L4}).
\end{align*}
Similarly, we have
\begin{align*}
\int_{\mathbb{R}}\Big[(\tilde{u}+\sum^{N}_{j=1}R_{j})(\tilde{v}+\sum^{N}_{j=1}S_{j})(\tilde{v}_{x}+\sum^{N}_{j=1}S_{jx})-\sum^{N}_{j=1}R_{j}S_{j}S_{jx}\Big]S_{ix}\,dx \leq O(\sqrt{\alpha})+O(e^{-\frac L4}).
\end{align*}
\end{proof}

In order to estimate the next terms, we need the following lemma.

\begin{lemma}\label{implicit-2}
Under the same assumptions as in Lemma \eqref{implicit-1}, we have
\begin{equation*}
\begin{aligned}
\Big{\|} &P\ast \Big{(}\frac{1}{2}\Big{(}\tilde{u}_{x}+\sum^{N}_{j=1}R_{jx}\Big{)}^2(\tilde{v}+\sum^{N}_{j=1}S_{j})
+(\tilde{u}+\sum^{N}_{j=1}R_{j})\Big{(}\tilde{u}_{x}+\sum^{N}_{j=1}R_{jx}\Big{)}\Big{(}\tilde{v}_{x}+\sum^{N}_{j=1}S_{jx}\Big{)}\\
&+(\tilde{u}+\sum^{N}_{j=1}R_{j})^{2}(\tilde{v}+\sum^{N}_{j=1}S_{j})-\frac{1}{2}\sum^{N}_{j=1}R^{2}_{jx}S_{j}-\sum^{N}_{j=1}R_{j}R_{jx}S_{jx}-\sum^{N}_{j=1}R^{2}_{j}S_{j}\bigg{)}\Big{\|}_{L^\infty}\\
\leq & O(\sqrt{\alpha})+O(e^{-\frac L4})
\end{aligned}
\end{equation*}
and
\begin{equation*}
\begin{aligned}
\Big{\|} &P\ast \Big{(}\frac{1}{2}\Big{(}\tilde{v}_{x}+\sum^{N}_{j=1}S_{jx}\Big{)}^2(\tilde{u}+\sum^{N}_{j=1}R_{j})
+(\tilde{v}+\sum^{N}_{j=1}S_{j})\Big{(}\tilde{v}_{x}+\sum^{N}_{j=1}S_{jx}\Big{)}\Big{(}\tilde{u}_{x}+\sum^{N}_{j=1}R_{jx}\Big{)}\\
&+(\tilde{v}+\sum^{N}_{j=1}S_{j})^{2}(\tilde{u}+\sum^{N}_{j=1}R_{j})-\frac{1}{2}\sum^{N}_{j=1}S^{2}_{jx}R_{j}-\sum^{N}_{j=1}S_{j}S_{jx}R_{jx}-\sum^{N}_{j=1}S^{2}_{j}R_{j}\Big{)}\Big{\|}_{L^\infty}\\
\leq & O(\sqrt{\alpha})+O(e^{-\frac L4}).
\end{aligned}
\end{equation*}
\end{lemma}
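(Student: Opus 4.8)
The plan is to reduce the $L^\infty$ estimate of the convolution to an $L^1$ estimate of its argument, and then to exploit an exact cancellation between the ``pure peakon'' part of that argument and the three explicitly subtracted terms. Since $P(x)=e^{-|x|}/2\in L^1\cap L^\infty$, Young's inequality gives $\|P\ast\Phi\|_{L^\infty}\le \|P\|_{L^\infty}\|\Phi\|_{L^1}=\frac12\|\Phi\|_{L^1}$ for any $\Phi\in L^1$, so it suffices to bound the $L^1$ norm of the function $\Phi$ sitting inside $P\ast(\cdot)$. Throughout I would use that the building blocks $R_j=\varphi_{c_j}(\cdot-\tilde{x}_j)$, $S_j=\psi_{c_j}(\cdot-\tilde{x}_j)$ and their first derivatives are uniformly bounded in $L^1\cap L^2\cap L^\infty$ by constants depending only on the $a_i,b_i$ (because $\varphi_{c_i}(x)=a_ie^{-|x|}$ and $\psi_{c_i}(x)=b_ie^{-|x|}$), together with the smallness estimate $\|\tilde{u}\|_{H^1}+\|\tilde{v}\|_{H^1}=O(\sqrt{\alpha})$ supplied by \eqref{preestimate}.

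Writing $R=\sum_j R_j$ and $S=\sum_j S_j$, I would first expand every square and product in $\Phi$ and isolate the part obtained by setting $\tilde{u}=\tilde{v}=0$, namely $\frac12 R_x^2 S+R R_x S_x+R^2 S$. Expanding each factor as a sum over indices and collecting the \emph{diagonal} contributions (those in which all peakon factors carry the same index $j$) produces exactly $\frac12\sum_j R_{jx}^2 S_j+\sum_j R_j R_{jx}S_{jx}+\sum_j R_j^2 S_j$, which cancels precisely the three subtracted terms $-\frac12\sum_j R_{jx}^2 S_j-\sum_j R_jR_{jx}S_{jx}-\sum_j R_j^2 S_j$. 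After this cancellation, every surviving term of $\Phi$ falls into one of two classes: (i) \emph{mixed} terms containing at least one factor among $\tilde{u},\tilde{v},\tilde{u}_x,\tilde{v}_x$; and (ii) \emph{off-diagonal pure} terms, i.e.\ products of peakon factors of which at least two carry distinct indices.

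For class (i) I would estimate each term by H\"older's inequality, placing a small factor $\tilde{u},\tilde{v}$ (or its derivative) in $L^2$ and the remaining peakon factors in $L^2$ or $L^\infty$; since each such term carries at least one $H^1$-small factor while the peakon factors are uniformly bounded, each contributes $O(\sqrt{\alpha})$ (terms with two or more small factors are even smaller). For class (ii), each off-diagonal term contains two factors centered at distinct points $\tilde{x}_j,\tilde{x}_k$ with $|\tilde{x}_j-\tilde{x}_k|\ge L/4$ by \eqref{dist-1}; the overlap integral $\int_{\mathbb{R}} e^{-|x-\tilde{x}_j|}e^{-|x-\tilde{x}_k|}\,dx=(1+|\tilde{x}_j-\tilde{x}_k|)e^{-|\tilde{x}_j-\tilde{x}_k|}$ is $O(e^{-L/4})$ once the harmless polynomial prefactor is absorbed by slightly relaxing the exponent, so each such term is $O(e^{-L/4})$ in $L^1$. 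Summing the finitely many terms yields $\|\Phi\|_{L^1}\le O(\sqrt{\alpha})+O(e^{-L/4})$, and hence the claimed $L^\infty$ bound. The second inequality follows verbatim after interchanging the roles $R\leftrightarrow S$ and $\tilde{u}\leftrightarrow\tilde{v}$.

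The only delicate point is organizational rather than analytic: one must expand the quadratic and cubic expressions carefully enough to verify that their diagonal parts reproduce \emph{exactly} the three subtracted terms, leaving no uncancelled same-index contribution. Once this precise cancellation is pinned down, all remaining terms are genuinely small---either because they carry an $H^1$-small perturbation factor or because they couple peakons at well-separated centers---and the estimates reduce to routine H\"older and convolution bounds.
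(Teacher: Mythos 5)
Your proposal is correct and follows essentially the same route as the paper: the paper likewise reduces the $L^\infty$ bound on the convolution to an $L^1$ bound on its argument (splitting it into the three natural groups $I_{2,1}$, $I_{2,2}$, $I_{2,3}$, each being a full nonlinear term minus its diagonal peakon part), cancels the same-index pure-peakon contributions against the subtracted sums, and then estimates the terms carrying a perturbation factor by H\"older together with $\|\tilde{u}\|_{H^1}+\|\tilde{v}\|_{H^1}=O(\sqrt{\alpha})$, and the off-diagonal pure-peakon terms by the exponential separation of the centers, giving $O(e^{-\frac{L}{4}})$. Your single global expansion versus the paper's three-group bookkeeping is only an organizational difference, and your explicit handling of the polynomial prefactor in the overlap integral is, if anything, slightly more careful than the paper's.
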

\begin{proof}
By using H$\rm{\ddot{o}}$lder inequality and triangle inequality, we get
\begin{align*}
\Big{\|} &P\ast \Big{(}\frac{1}{2}\Big{(}\tilde{u}_{x}+\sum^{N}_{j=1}R_{jx}\Big{)}^2(\tilde{v}+\sum^{N}_{j=1}S_{j})
+(\tilde{u}+\sum^{N}_{j=1}R_{j})\Big{(}\tilde{u}_{x}+\sum^{N}_{j=1}R_{jx}\Big{)}\Big{(}\tilde{v}_{x}+\sum^{N}_{j=1}S_{jx}\Big{)}\\
&\;\;+(\tilde{u}+\sum^{N}_{j=1}R_{j})^{2}(\tilde{v}+\sum^{N}_{j=1}S_{j})-\frac{1}{2}\sum^{N}_{j=1}R^{2}_{jx}S_{j}-\sum^{N}_{j=1}R_{j}R_{jx}S_{jx}-\sum^{N}_{j=1}R^{2}_{j}S_{j}\Big{)}\Big{\|}_{L^\infty}\\
\leq &\frac{1}{2}\,\int_{\mathbb{R}}\Big|\Big{(}\tilde{u}_{x}+\sum^{N}_{j=1}R_{jx}\Big{)}^2(\tilde{v}+\sum^{N}_{j=1}S_{j})-\sum^{N}_{j=1}R^{2}_{jx}S_{j}\Big|\,dx\\
&\;\;+\int_{\mathbb{R}}\Big| (\tilde{u}+\sum^{N}_{j=1}R_{j})\Big{(}\tilde{u}_{x}+\sum^{N}_{j=1}R_{jx}\Big{)}\Big{(}\tilde{v}_{x}+\sum^{N}_{j=1}S_{jx}\Big{)}-\sum^{N}_{j=1}R_{j}R_{jx}S_{jx}\Big|\,dx\\
&\;\; +\int_{\mathbb{R}}\Big| (\tilde{u}+\sum^{N}_{j=1}R_{j})^{2}(\tilde{v}+\sum^{N}_{j=1}S_{j})-\sum^{N}_{j=1}R^{2}_{j}S_{j}\Big|\,dx\\
&=\frac{1}{2}\,I_{2,1}+I_{2,2}+I_{2,3}.
\end{align*}
For the term $I_{2,1}$, we have
\begin{align*}
I_{2,1}=&\int_{\mathbb{R}}\Big|\Big{(}\tilde{u}_{x}+\sum^{N}_{j=1}R_{jx}\Big{)}^2(\tilde{v}+\sum^{N}_{j=1}S_{j})-\sum^{N}_{j=1}R^{2}_{jx}S_{j}\Big|\,dx\\
\leq &\int_{\mathbb{R}}\Big|\Big[\Big{(}\tilde{u}_{x}+\sum^{N}_{j=1}R_{jx}\Big{)}^2-\sum^{N}_{j=1}R^{2}_{jx}\Big](\tilde{v}+\sum^{N}_{j=1}S_{j})\Big|\,dx+\int_{\mathbb{R}}\Big|\Big(\sum^{N}_{j=1}R^{2}_{jx}\Big)\tilde{v}\Big|\,dx\\
&\;\;+\int_{\mathbb{R}}\Big|\sum^{N}_{j=1}R^{2}_{jx}\sum^{N}_{i=1}S_{i}-\sum^{N}_{j=1}R^{2}_{jx}S_{j}\Big|\,dx\\
\leq &\|\tilde{v}\|_{L^\infty}\int_{\mathbb{R}}\Big( \Big|\tilde{u}^2_{x}\Big|+\Big|2\tilde{u}_{x}\sum^{N}_{j=1}R_{jx}\Big|+\Big|\Big(\sum^{N}_{j=1}R_{jx}\Big)^2-\sum^{N}_{j=1}R^2_{jx}\Big|\Big)\,dx\\
&\;\;+\|\tilde{v}\|_{L^\infty}\int_{\mathbb{R}}\sum^{N}_{j=1}R^{2}_{jx}\,dx+\int_{\mathbb{R}}\sum^{N}_{i\ne j}\sum^{N}_{j=1}R^{2}_{jx}S_{i}\,dx\leq O(\sqrt{\alpha})+O(e^{\frac L4}).
\end{align*}
For the term $I_{2,2}$, we have
\begin{align*}
I_{2,2}=&\int_{\mathbb{R}}\Big|(\tilde{u}+\sum^{N}_{j=1}R_{j})\Big{(}\tilde{u}_{x}+\sum^{N}_{j=1}R_{jx}\Big{)}\Big{(}\tilde{v}_{x}+\sum^{N}_{j=1}S_{jx}\Big{)}-\sum^{N}_{j=1}R_{j}R_{jx}S_{jx}\Big|\,dx\\
\leq &\int_{\mathbb{R}}\Big|\tilde{u}\Big{(}\tilde{u}_{x}+\sum^{N}_{j=1}R_{jx}\Big{)}\Big{(}\tilde{v}_{x}+\sum^{N}_{j=1}S_{jx}\Big{)}\Big|\,dx+\int_{\mathbb{R}}\Big|\Big(\sum^{N}_{j=1}R_{j}\Big)\tilde{u}_{x}\Big{(}\tilde{v}_{x}+\sum^{N}_{j=1}S_{jx}\Big{)}\Big|\,dx\\
&\;+\int_{\mathbb{R}}\Big|\Big(\sum^{N}_{j=1}R_{j}\Big)\Big(\sum^{N}_{j=1}S_{jx}\Big)\tilde{u}_{x}\Big|\,dx+\int_{\mathbb{R}}\Big|\Big(\sum^{N}_{j=1}R_{j}\Big)\Big(\sum^{N}_{j=1}S_{jx}\Big)\Big(\sum^{N}_{j=1}R_{jx}\Big)-\sum^{N}_{j=1}R_{j}R_{jx}S_{jx}\Big|\,dx\\
\leq
&\|\tilde{u}\|_{L^\infty}\int_{\mathbb{R}}\left|u_{x}v_{x}\right|\,dx+\|v\|_{L^\infty}\Big(\int_{\mathbb{R}}\Big(\sum^{N}_{j=1}R_{j}\Big)^2\,dx\Big)^{\frac{1}{2}}\Big(\int_{\mathbb{R}}\tilde{u}^2_{x}\,dx\Big)^{\frac 12}\\
&\;\;+\sum^{N}_{j=1}\|S_{j}\|_{L^\infty}\Big(\int_{\mathbb{R}}\Big(\sum^{N}_{j=1}R_{j}\Big)^2\,dx\Big)^{\frac 12}\Big(\int_{\mathbb{R}}\tilde{u}^2_{x}\,dx\Big)^{\frac 12}+\int_{\mathbb{R}}\Big|\sum_{i\ne j\,or\,j\ne k\,or\,i\ne k}R_{i}S_{jx}R_{jx}\Big|\,dx\\
\leq & O(\sqrt{\alpha})+O(e^{\frac L4}).
\end{align*}
For this term $I_{2,3}$, we obtain
\begin{align*}
I_{2,3}=&\int_{\mathbb{R}}\Big(\tilde{u}+\sum^{N}_{j=1}R_{j}\Big)^{2}\Big(\tilde{v}+\sum^{N}_{j=1}S_{j}\Big)-\sum^{N}_{j=1}R^{2}_{j}S_{j}\,dx\\
\leq & \int_{\mathbb{R}}\Big|\Big((\tilde{u}+\sum^{N}_{j=1}R_{j})^{2}-\sum^{N}_{j=1}R^{2}_{j}\Big)\Big(\tilde{v}+\sum^{N}_{j=1}S_{j}\Big)\Big|\,dx+\int_{\mathbb{R}}\Big|\sum^{N}_{j=1}R^2_{j}\tilde{v}\Big|\,dx\\
&\qquad\qquad +\int_{\mathbb{R}}\Big|\sum^{N}_{j=1}R^{2}_{j}\sum^{N}_{i=1}S_{i}-\sum^{N}_{j=1}R^{2}_{j}S_{j}\Big|\,dx\\
\leq &\|v\|_{L^\infty}\int_{\mathbb{R}}|\tilde{u}|^2+\Big|2\tilde{u}\sum^{N}_{j=1}R_{j}\Big|+\Big|\Big(\sum^{N}_{j=1}R_{j}\Big)^2-\sum^{N}_{j=1}R^{2}_{j}\Big|\,dx+\|\tilde{v}\|_{L^\infty}\int_{\mathbb{R}}\sum^{N}_{j=1}R^{2}_{j}\,dx\\
&\qquad\qquad+\int_{\mathbb{R}}\sum^{N}_{i\ne j}\sum^{N}_{j=1}R^{2}_{j}S_{i}\,dx\leq O(\sqrt{\alpha})+O(e^{\frac L4}).
\end{align*}
Accordingly, we get
\begin{align*}
\Big{\|} &P\ast \Big{(}\frac{1}{2}\Big{(}\tilde{u}_{x}+\sum^{N}_{j=1}R_{jx}\Big{)}^2(\tilde{v}+\sum^{N}_{j=1}S_{j})
+(\tilde{u}+\sum^{N}_{j=1}R_{j})\Big{(}\tilde{u}_{x}+\sum^{N}_{j=1}R_{jx}\Big{)}\Big{(}\tilde{v}_{x}+\sum^{N}_{j=1}S_{jx}\Big{)}\\
&\qquad\qquad+(\tilde{u}+\sum^{N}_{j=1}R_{j})^{2}(\tilde{v}+\sum^{N}_{j=1}S_{j})-\frac{1}{2}\sum^{N}_{j=1}R^{2}_{jx}S_{j}-\sum^{N}_{j=1}R_{j}R_{jx}S_{jx}-\sum^{N}_{j=1}R^{2}_{j}S_{j}\Big{)}\Big{\|}_{L^\infty}\\
\leq & O(\sqrt{\alpha})+O(e^{-\frac L4}).
\end{align*}
Similarly, we can prove
\begin{align*}
\Big{\|} &P\ast \Big{(}\frac{1}{2}\Big{(}\tilde{v}_{x}+\sum^{N}_{j=1}S_{jx}\Big{)}^2(\tilde{u}+\sum^{N}_{j=1}R_{j})
+(\tilde{v}+\sum^{N}_{j=1}S_{j})\Big{(}\tilde{v}_{x}+\sum^{N}_{j=1}S_{jx}\Big{)}\Big{(}\tilde{u}_{x}+\sum^{N}_{j=1}R_{jx}\Big{)}\\
&\qquad\qquad\qquad+(\tilde{v}+\sum^{N}_{j=1}S_{j})^{2}(\tilde{u}+\sum^{N}_{j=1}R_{j})-\frac{1}{2}\sum^{N}_{j=1}S^{2}_{jx}R_{j}-\sum^{N}_{j=1}S_{j}S_{jx}R_{jx}-\sum^{N}_{j=1}S^{2}_{j}R_{j}\Big{)}\Big{\|}_{L^\infty}\\
\leq & O(\sqrt{\alpha})+O(e^{-\frac L4}).
\end{align*}
This completes the proof of this lemma.
\end{proof}

Thanks to the lemma \ref{implicit-2}, we have
\begin{align*}
\int_{\mathbb{R}}&P\ast\Big{(}\frac{1}{2}\Big{(}\tilde{u}_{x}+\sum^{N}_{j=1}R_{jx}\Big{)}^2(\tilde{v}+\sum^{N}_{j=1}S_{j})
+(\tilde{u}+\sum^{N}_{j=1}R_{j})\Big{(}\tilde{u}_{x}+\sum^{N}_{j=1}R_{jx}\Big{)}\Big{(}\tilde{v}_{x}+\sum^{N}_{j=1}S_{jx}\Big{)}\\
&+(\tilde{u}+\sum^{N}_{j=1}R_{j})^{2}(\tilde{v}+\sum^{N}_{j=1}S_{j})-\frac{1}{2}\sum^{N}_{j=1}R^{2}_{jx}S_{j}-\sum^{N}_{j=1}R_{j}R_{jx}S_{jx}-\sum^{N}_{j=1}R^{2}_{j}S_{j}\Big{)}\partial^{2}_{x}R_{i}\,dx\\
& \triangleq \int_{\mathbb{R}}A(x)\partial^{2}_{x}R_{i}\,dx=\int_{\mathbb{R}}A(x)\left(R_{i}-2a_{i}\delta(x-\tilde{x}_{i}(t))\right)\,dx\\
=&\int_{\mathbb{R}}A(x)R_{i}\,dx-2a_{i}A(\tilde{x}_{i}(t))\leq O\left(\|A\|_{L^\infty}\right) \leq O(\sqrt{\alpha})+O(e^{-\frac L4}).
\end{align*}
Similarly, we get
\begin{align*}
\int_{\mathbb{R}}&P\ast\Big{(}\frac{1}{2}\Big{(}\tilde{v}_{x}+\sum^{N}_{j=1}S_{jx}\Big{)}^2(\tilde{u}+\sum^{N}_{j=1}R_{j})
+(\tilde{v}+\sum^{N}_{j=1}S_{j})\Big{(}\tilde{v}_{x}+\sum^{N}_{j=1}S_{jx}\Big{)}\Big{(}\tilde{u}_{x}+\sum^{N}_{j=1}R_{jx}\Big{)}\\
&+(\tilde{v}+\sum^{N}_{j=1}S_{j})^{2}(\tilde{u}+\sum^{N}_{j=1}R_{j})-\frac{1}{2}\sum^{N}_{j=1}S^{2}_{jx}R_{j}-\sum^{N}_{j=1}S_{j}S_{jx}R_{jx}-\sum^{N}_{j=1}S^{2}_{j}R_{j}\Big{)}\partial^{2}_{x}S_{i}\,dx\\
\leq & O(\sqrt{\alpha})+O(e^{-\frac L4}).
\end{align*}

\begin{lemma}\label{implicit-3}
Under the same assumption as \eqref{implicit-1}, we have
\begin{align*}
&\Big\|P\ast\Big{(}\frac{1}{2}\big{(}\tilde{u}_{x}+\sum^{N}_{j=1}R_{jx}\big{)}^{2}\big{(}\tilde{v}_{x}+\sum^{N}_{j=1}S_{jx}\big{)}-\frac{1}{2}\sum^{N}_{j=1}R^{2}_{jx}S_{jx}\Big{)}\Big\|_{L^\infty}\leq O(\sqrt{\alpha})+O(e^{-\frac L4}),\\
&\Big\|P\ast\Big{(}\frac{1}{2}\big{(}\tilde{v}_{x}+\sum^{N}_{j=1}S_{jx}\big{)}^{2}\big{(}\tilde{u}_{x}+\sum^{N}_{j=1}R_{jx}\big{)}-\frac{1}{2}\sum^{N}_{j=1}S^{2}_{jx}R_{jx}\Big{)}\Big\|_{L^\infty}\leq O(\sqrt{\alpha})+O(e^{-\frac L4}).
\end{align*}
\end{lemma}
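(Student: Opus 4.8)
The plan is to argue exactly as in the proof of Lemma~\ref{implicit-2}: bound the $L^\infty$ norm of each convolution by the $L^1$ norm of its argument through Young's inequality, $\|P\ast f\|_{L^\infty}\leq \|P\|_{L^\infty}\|f\|_{L^1}=\frac12\|f\|_{L^1}$, and then show that the argument, after cancellation of its ``diagonal'' part, decomposes into terms each carrying either a factor of $\tilde u_x$ or $\tilde v_x$ (which are $O(\sqrt\alpha)$ in $L^2$ by \eqref{preestimate}) or a product of peakon derivatives centred at two distinct points $\tilde x_j\neq \tilde x_k$ (which is $O(e^{-L/4})$ by the exponential separation \eqref{dist-1}). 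Only the $L^1$ norm of the argument matters, since the outer convolution loses no smallness.

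For the first inequality I would begin by expanding
\[
\Big(\tilde u_x+\sum_{j=1}^N R_{jx}\Big)^2=\tilde u_x^2+2\tilde u_x\sum_{j=1}^N R_{jx}+\sum_{j=1}^N R_{jx}^2+\sum_{j\neq k}R_{jx}R_{kx},
\]
multiplying by $\big(\tilde v_x+\sum_{j}S_{jx}\big)$, and distributing. The only monomial that contains neither a perturbation factor $\tilde u_x,\tilde v_x$ nor two derivatives at distinct centres is $\sum_{j}R_{jx}^2\,S_{jx}$, which is precisely what is removed by the subtracted term $-\frac12\sum_j R_{jx}^2S_{jx}$. Every surviving monomial is then of one of two types: it either carries a factor $\tilde u_x$ or $\tilde v_x$, or it is an ``off-diagonal'' product of peakon derivatives involving at least two distinct centres.

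For the first type I would estimate the $L^1$ norm by H\"older's inequality, systematically placing the low-regularity factors $\tilde u_x,\tilde v_x$ in $L^2$ and the peakon factors in $L^2\cap L^\infty$; for instance $\int_{\mathbb R}|\tilde u_x^2\,S_{jx}|\,dx\leq \|S_{jx}\|_{L^\infty}\|\tilde u_x\|_{L^2}^2=O(\alpha)$ and $\int_{\mathbb R}\big|\tilde u_x\tilde v_x\sum_j R_{jx}\big|\,dx\leq \big\|\sum_j R_{jx}\big\|_{L^\infty}\|\tilde u_x\|_{L^2}\|\tilde v_x\|_{L^2}=O(\alpha)$, while the monomials linear in $\tilde u_x$ or $\tilde v_x$ with two bounded peakon factors are $O(\sqrt\alpha)$. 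For the second type, such as $\int_{\mathbb R}|R_{jx}^2S_{kx}|\,dx$ or $\int_{\mathbb R}|R_{ix}R_{jx}S_{kx}|\,dx$ with at least two distinct centres, I would use that $\varphi_c',\psi_c'$ decay like $e^{-|\cdot|}$ together with $|\tilde x_j-\tilde x_k|\geq L/4$ from \eqref{dist-1} to bound each such integral by $O(e^{-L/4})$. Summing the finitely many contributions gives $O(\sqrt\alpha)+O(e^{-L/4})$, and the second inequality follows verbatim by exchanging the roles of $(u,R)$ and $(v,S)$.

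The main obstacle here is bookkeeping rather than analysis: one must check that, after the single cancellation, every remaining term genuinely carries either a perturbation factor or two peakon derivatives at separated centres, so that no purely ``diagonal'' cubic peakon term survives to spoil the estimate. The only delicate analytic point is the low regularity of $\tilde u_x,\tilde v_x$, which are merely in $L^2$; this forces the consistent use of $L^2$--$L^\infty$ H\"older pairings (rather than an $L^\infty$ bound on the full argument) in order to extract the $O(\sqrt\alpha)$ gain.
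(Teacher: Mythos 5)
Your proposal is correct and follows essentially the same route as the paper: bound the convolution by $\|P\ast f\|_{L^\infty}\leq\frac12\|f\|_{L^1}$, cancel the diagonal term $\sum_j R_{jx}^2S_{jx}$, and estimate the remaining monomials by Cauchy--Schwarz/H\"older pairings (using $\|\tilde u_x\|_{L^2},\|\tilde v_x\|_{L^2}=O(\sqrt\alpha)$) together with the exponential separation of distinct centres for the off-diagonal peakon products. The only cosmetic difference is that you expand the square fully while the paper organizes the same computation by adding and subtracting intermediate terms ($I_{3,1},I_{3,2},I_{3,3}$); the estimates term by term are identical.
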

\begin{proof}
We estimate
\begin{align*}
&\Big\|P\ast\Big{(}\frac{1}{2}\big{(}\tilde{u}_{x}+\sum^{N}_{j=1}R_{jx}\big{)}^{2}\big{(}\tilde{v}_{x}+\sum^{N}_{j=1}S_{jx}\big{)}-\frac{1}{2}\sum^{N}_{j=1}R^{2}_{jx}S_{jx}\Big{)}\Big\|\\
\leq & \frac{1}{2}\,\int_{\mathbb{R}}\Big|(\tilde{u}_{x}+\sum^{N}_{j=1}R_{jx})^{2}(\tilde{v}_{x}+\sum^{N}_{j=1}S_{jx})-\sum^{N}_{j=1}R^{2}_{jx}S_{jx}\Big|\,dx\\
\leq &
\frac{1}{2}\,\Big(\int_{\mathbb R}\Big|\Big((\tilde{u}_{x}+\sum^{N}_{j=1}R_{jx})^{2}-\sum^{N}_{j=1}R^{2}_{jx}\Big)(\tilde{v}_{x}+\sum^{N}_{j=1}S_{jx})\Big|\,dx+\int_{\mathbb R}\Big|\tilde{v}_{x}\sum^{N}_{j=1}R^{2}_{jx}\Big|\,dx\Big)\\
&\qquad\qquad +\frac{1}{2}\int_{\mathbb R}\Big|\sum^{N}_{j=1}R^{2}_{jx}\sum^{N}_{j=1}S_{jx}-\sum^{N}_{j=1}R^{2}_{jx}S_{jx}\Big|\,dx=\frac{1}{2}\,(I_{3,1}+I_{3,2}+I_{3,3}).
\end{align*}
For the term $I_{3,1}$, we obtain
\begin{align*}
I_{3,1}=&\int_{\mathbb{R}}\Big|\Big{(}\tilde{u}_{x}+\sum^{N}_{j=1}R_{jx}\Big{)}^2(\tilde{v}_{x}+\sum^{N}_{j=1}S_{jx})-\sum^{N}_{j=1}R^{2}_{jx}S_{jx}\Big|\,dx\\
\leq &\int_{\mathbb{R}}\Big|\Big[\Big{(}\tilde{u}_{x}+\sum^{N}_{j=1}R_{jx}\Big{)}^2-\sum^{N}_{j=1}R^{2}_{jx}\Big](\tilde{v}_{x}+\sum^{N}_{j=1}S_{jx})\Big|\,dx+\int_{\mathbb{R}}\Big|\Big(\sum^{N}_{j=1}R^{2}_{jx}\Big)\tilde{v}_{x}\Big|\,dx\\
&\qquad\qquad+\int_{\mathbb{R}}\Big|\sum^{N}_{j=1}R^{2}_{jx}\sum^{N}_{i=1}S_{ix}-\sum^{N}_{j=1}R^{2}_{jx}S_{jx}\Big|\,dx\\
\leq &\|v\|_{L^\infty}\int_{\mathbb{R}}\Big|\tilde{u}^2_{x}\Big|+\Big|2\tilde{u}_{x}\sum^{N}_{j=1}R_{jx}\Big|+\Big|\Big(\sum^{N}_{j=1}R_{jx}\Big)^2-\sum^{N}_{j=1}R^2_{jx}\Big|\,dx\\
&\qquad\qquad+\|\tilde{v}\|_{L^\infty}\int_{\mathbb{R}}\sum^{N}_{j=1}R^{2}_{jx}\,dx+\int_{\mathbb{R}}\sum^{N}_{i\ne j}\sum^{N}_{j=1}\left|R^{2}_{jx}S_{ix}\right|\,dx\leq O(\sqrt{\alpha})+O(e^{-\frac L4}).
\end{align*}
For the term $I_{3,2}$, we get
\begin{align*}
I_{3,2}=\int_{\mathbb R}\sum^{N}_{j=1}R^2_{jx}|\tilde{v}_{x}|\,dx= \sum^{N}_{j=1}\int_{\mathbb R}R^2_{jx}|\tilde{v}_{x}|\,dx\leq \sum^{N}_{j=1}\Big(\int_{\mathbb R}\tilde{v}^2_{x}\,dx\Big)^{\frac 12}\Big(\int_{\mathbb R}R^4_{jx}\,dx\Big)^{\frac 12}\leq O(\sqrt{\alpha}).
\end{align*}
For the term $I_{3,3}$, we have
\begin{align*}
 \int_{\mathbb R}\Big|\sum^{N}_{j=1}R^{2}_{jx}\sum^{N}_{j=1}S_{jx}-\sum^{N}_{j=1}R^{2}_{jx}S_{jx}\Big|\,dx \leq \int_{\mathbb R}\sum^{N}_{i\ne j}\sum^{N}_{j=1}\left|R^{2}_{jx}S_{ix}\right|\,dx \leq  O(e^{-\frac L4}).
\end{align*}
Thus, we deduce that
\begin{align*}
\Big\|P\ast\Big{(}\frac{1}{2}\big{(}\tilde{u}_{x}+\sum^{N}_{j=1}R_{jx}\big{)}^{2}\big{(}\tilde{v}_{x}+\sum^{N}_{j=1}S_{jx}\big{)}-\frac{1}{2}\sum^{N}_{j=1}R^{2}_{jx}S_{jx}\Big{)}\Big\|_{L^\infty}\leq O(\sqrt{\alpha})+O(e^{-\frac L4})
\end{align*}
and
\begin{align*}
&\Big\|P\ast\Big{(}\frac{1}{2}\big{(}\tilde{v}_{x}+\sum^{N}_{j=1}S_{jx}\big{)}^{2}\big{(}\tilde{u}_{x}+\sum^{N}_{j=1}R_{jx}\big{)}-\frac{1}{2}\sum^{N}_{j=1}S^{2}_{jx}R_{jx}\Big{)}\Big\|_{L^\infty}\leq O(\sqrt{\alpha})+O(e^{-\frac L4}).
\end{align*}
This completes the proof of this lemma.
\end{proof}

On account of Lemma \eqref{implicit-3}, we obtain
\begin{align*}
&\int_{\mathbb{R}}P\ast\Big{(}\frac{1}{2}\big{(}\tilde{u}_{x}+\sum^{N}_{j=1}R_{jx}\big{)}^{2}\big{(}\tilde{v}_{x}+\sum^{N}_{j=1}S_{jx}\big{)}-\frac{1}{2}\sum^{N}_{j=1}R^{2}_{jx}S_{jx}\Big{)}\partial_{x}R_{i}\,dx\\
\leq &\Big( O(\sqrt{\alpha})+O(e^{\frac L4})\Big) \int_{\mathbb R}|R_{ix}|\,dx \leq O(\sqrt{\alpha})+O(e^{-\frac L4}).
\end{align*}
Similarly, we find
\begin{align*}
\int_{\mathbb{R}}P\ast\Big{(}\frac{1}{2}\big{(}\tilde{v}_{x}+\sum^{N}_{j=1}S_{jx}\big{)}^{2}\big{(}\tilde{u}_{x}+\sum^{N}_{j=1}R_{jx}\big{)}-\frac{1}{2}\sum^{N}_{j=1}S^{2}_{jx}R_{jx}\Big{)}\partial_{x}S_{i}\,dx\leq O(\sqrt{\alpha})+O(e^{-\frac L4}).
\end{align*}
Thanks to the lemmas \ref{implicit-1}, \ref{implicit-2} and \ref{implicit-3}, we arrive at
\begin{align*}
|\dot{\tilde{x}}_{i}(t)-c_{i}|\Big{(}\|R_{ix}\|^{2}_{L^{2}}+\|S_{ix}\|^{2}_{L^{2}}+O(\sqrt{\alpha})\Big{)}\leq O(\sqrt{\alpha})+O(e^{-\frac{L}{8}}).
\end{align*}
Since $\|R_{ix}\|_{L^2}>a_1$ and $\|S_{ix}\|^{2}_{L^2}>b_1$, then we have
\begin{align*}
|\dot{\tilde{x}}_{i}(t)-c_{i}|\leq O(\sqrt{\alpha})+O(e^{-\frac{L}{8}}).
\end{align*}

Finally, we claim
\begin{align*}
\left|x_{i}-\tilde{x}_{i}\right|\leq \frac{L}{12}.
\end{align*}
Indeed, if $x\notin [\tilde{x}_{i}-L/12,\tilde{x}_{i}+L/12]$, then
\begin{align*}
u(x,t)v(x,t)\leq c_{i}e^{-\frac{L}{6}}+O(\sqrt{\alpha})+O(e^{-\frac{L}{4}}) \leq c_{i}-O(\sqrt{\alpha})-O(e^{-\frac{L}{4}}),
\end{align*}
by choosing $\alpha$ small enough and $L$ large enough.
However,
\begin{align*}
u(t,x_{i})v(t,x_{i})=\max_{x\in J_{i}(t)}u(x)v(x)\geq u(\tilde{x}_{i})v(\tilde{x}_{i})=c_{i}+O(\sqrt{\alpha})+O(e^{-\frac{L}{4}}),
\end{align*}
which is a contradiction.
Therefore, we have $\left|x_{i}-\tilde{x}_{i}\right|\leq L/12$.
\end{proof}

\subsection{Monotonicity property}
Thanks to the preceding proposition, for $\epsilon_{0}>0$ small enough and $L_{0}>0$ large enough, one can construct $C^1$ functions $\tilde{x}_{1},...,\tilde{x}_{N}$ defined on $[0,t_{0}]$ such that \eqref{initial-3-1}-\eqref{initial-3-5} are satisfied. In this subsection we investigate the almost monotonicity of functionals that are very close to the energy at the right of $i$th bump, $i=1,...,N-1$ of $(u, v)$. Let $\Psi$ be a $C^{\infty}$-function such that
\begin{equation*}
\left\{
\begin{aligned}
&0<\Psi(x)<1,\,\Psi'(x)>0,\quad\quad\quad & x\in{\mathbb R},\\
&|\Psi'''|\leq 10|\Psi'|,&x\in [-1,1],\\
\end{aligned}
\right.
\end{equation*}
and
\begin{equation*}
\Psi(x)=\left\{
\begin{aligned}
&e^{-|x|},\quad &x<-1,\\
&1-e^{-|x|},&x>1.
\end{aligned}
\right.
\end{equation*}
Setting $\Psi_{K}=\Psi(\cdot/K)$, we introduce for $i=2,...,N$,
\begin{align*}
&\mathcal{J}^{u}_{j,K}(t)=\int_{\mathbb R}\left(u^2(t)+u^2_{x}(t)\right)\Psi_{j,K}(t)\,dx,\quad \mathcal{J}^{v}_{j,K}(t)=\int_{\mathbb R}\left(v^2(t)+v^2_{x}(t)\right)\Psi_{j,K}(t)\,dx,\\
&\mathcal{J}^{u,v}_{j,K}(t)=\int_{\mathbb R}\left(u(t)v(t)+u_{x}(t)v_{x}(t)\right)\Psi_{j,K}(t)\,dx,
\end{align*}
where $\Psi_{j,K}(t,x)=\Psi_K(x-y_{j}(t))$ with $y_{j}(t)$, $j=2,...,N$, defined in \eqref{initial-3-4}. Note that $\mathcal{J}^{u}_{j,K}(t)$ is close to $\left\|u(t)\right\|_{H^{1}(x>y_{j}(t))}$ and thus measures the energy at the right of the $(j-1)th$ bump of $u$, $\mathcal{J}^{v}_{j,K}(t)$ is close to $\left\|v(t)\right\|_{H^{1}(x>y_{j}(t))}$ and thus measures the energy at the right of the $(j-1)$th bump of $v$, and $\mathcal{J}^{u,v}_{j,K}(t)$ is close to $<u(t),v(t)>_{H^{1}\times H^{1}(x>y_{j}(t))}$ and thus measures the energy at the right of the $(j-1)th$ bump of $(u, v)$. Finally, we set
\begin{align*}
\sigma_{0}=\frac{1}{4}\,\min(c_{1},c_{2}-c_{1},...,c_{N}-c_{N-1}).
\end{align*}

We have the following monotonicity result.
\begin{prop}\label{monotonicity}
(Exponential decay of the functionals $\mathcal{J}^{u}_{j,K}(t)$, $\mathcal{J}^{v}_{j,K}(t)$ and $\mathcal{J}^{u,v}_{j,K}(t)$). Let $(u,v)\in Y([0,T[)$ be a solution of two component Novikov equations satisfying \eqref{initial-3-2} on $[0,t_{0}]$. There exist $\alpha_{0}>0$ and $L_{0}>0$ only depending on $c_{1}$ such that if $0<\alpha<\alpha_{0}$ and $L\geq L_{0}$ then for any $4 \leq K \lesssim \sqrt{L}$,
\begin{align*}
&\mathcal{J}^{u}_{j,K}(t)-\mathcal{J}^{u}_{j,K}(0)\leq O(e^{-\frac{\sigma_{0}L}{8K}}), \quad \mathcal{J}^{v}_{j,K}(t)-\mathcal{J}^{v}_{j,K}(0)\leq O(e^{-\frac{\sigma_{0}L}{8K}}),\\
&\mathcal{J}^{u,v}_{j,K}(t)-\mathcal{J}^{u,v}_{j,K}(0)\leq O(e^{-\frac{\sigma_{0}L}{8K}}),\quad \forall j\in \{2,...,N\},\;\forall t\in[0,t_{0}].
\end{align*}
\end{prop}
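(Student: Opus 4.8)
The plan is to prove, for each of the three functionals, a differential inequality whose right-hand side decays exponentially both in $L$ and in $t$, and then to integrate it over $[0,t]\subset[0,t_{0}]$. I illustrate the scheme on $\mathcal{J}^{u}_{j,K}$; the functional $\mathcal{J}^{v}_{j,K}$ is symmetric, and $\mathcal{J}^{u,v}_{j,K}$ needs one extra idea, discussed below. First I would differentiate $\mathcal{J}^{u}_{j,K}(t)$ in time, substitute $u_{t}$ (and $u_{xt}=\partial_{x}u_{t}$) from the first equation of the weak form \eqref{weakform}, and integrate by parts in the term carrying $u_{xt}$. Writing $G=P_{x}\ast\left(\frac{1}{2}u_{x}^{2}v+uu_{x}v_{x}+u^{2}v\right)+\frac{1}{2}P\ast\left(u_{x}^{2}v_{x}\right)$ and using $\partial_{t}\Psi_{j,K}=-\dot{y}_{j}\Psi'_{j,K}$ together with $u_{xx}=u-m$, one is led to an identity of the form
$$\frac{d}{dt}\mathcal{J}^{u}_{j,K}=\int_{\mathbb{R}}\left(uv-\dot{y}_{j}\right)(u^{2}+u_{x}^{2})\Psi'_{j,K}\,dx+\int_{\mathbb{R}}(uv)_{x}(u^{2}-u_{x}^{2})\Psi_{j,K}\,dx-2\int_{\mathbb{R}}Gm\,\Psi_{j,K}\,dx+2\int_{\mathbb{R}}Gu_{x}\Psi'_{j,K}\,dx.$$

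The leading term is the first one, and it carries the correct sign. By \eqref{initial-3-1}, $\dot{y}_{j}=\tfrac{1}{2}(\dot{\tilde{x}}_{j-1}+\dot{\tilde{x}}_{j})=\tfrac{1}{2}(c_{j-1}+c_{j})+O(\sqrt{\alpha})+O(L^{-1})$ lies strictly between $c_{j-1}$ and $c_{j}$, so the weight $\Psi'_{j,K}$ is concentrated, with width $\sim K$, at the valley point $y_{j}(t)$, which by \eqref{initial-3-3} sits at distance at least $\tfrac{3L}{8}+\sigma_{0}t$ from both adjacent peakons. I would split $\mathbb{R}$ into the valley $|x-y_{j}(t)|\le L/8$ and its complement. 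On the valley the closeness estimate \eqref{initial-3-2} forces $u(t,x)v(t,x)$ to be small, whence $uv-\dot{y}_{j}<0$ and, since $(u^{2}+u_{x}^{2})\Psi'_{j,K}\ge 0$, this part of the integral is nonpositive and may be discarded; on the complement $\Psi'_{j,K}(x)\lesssim K^{-1}e^{-|x-y_{j}|/K}$ is exponentially small, while $u^{2}+u_{x}^{2}$ and $uv$ stay bounded by the conserved energies $E_{u}[u]$, $E_{v}[v]$, so this part is $O\!\left(e^{-(\frac{3L}{8}+\sigma_{0}t)/K}\right)$.

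The harder part is the three remaining terms, and this is where I expect the main obstacle to lie. The crucial observation is that when $\Psi_{j,K}\equiv 1$ the conservation of $E_{u}[u]$ forces $\int_{\mathbb{R}}(uv)_{x}(u^{2}-u_{x}^{2})\,dx-2\int_{\mathbb{R}}Gm\,dx=0$ (indeed both equal $\pm 2\int u_{t}m\,dx$, which vanishes by the symmetry of $1-\partial_x^2$); hence with a genuine weight these $\Psi_{j,K}$-weighted terms reduce, after integration by parts, to commutators carrying a factor $\Psi'_{j,K}$ together with genuinely cubic nonlocal remainders. I would estimate every nonlocal contribution by splitting $P\ast f(x)=\int_{\mathbb{R}}P(x-y)f(y)\,dy$ into $|x-y|\le L/8$ and $|x-y|>L/8$: the exponential decay of $P(x)=e^{-|x|}/2$ transfers the smallness of the solution in the valley (or the smallness of $\Psi'_{j,K}$ in the tails) into the bound, while the pointwise inequalities $|u_{x}|\le u$, $|v_{x}|\le v$ from Lemma \ref{nonnegative} and the uniform $H^{1}\hookrightarrow L^{\infty}$ bounds keep all remaining factors controlled. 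The goal is to absorb each surviving term into $O\!\left(e^{-(\frac{3L}{8}+\sigma_{0}t)/K}\right)$; the bookkeeping of these coupled cubic, nonlocal interaction terms is considerably heavier than in the scalar Novikov case and constitutes the technical heart of the argument.

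For the mixed functional $\mathcal{J}^{u,v}_{j,K}$ I would run the same computation using both equations of \eqref{weakform}, producing the analogous leading term $\int_{\mathbb{R}}(uv-\dot{y}_{j})(uv+u_{x}v_{x})\Psi'_{j,K}\,dx$. Here the new difficulty, flagged in the introduction, is that $u_{x}v_{x}$ has no definite sign, so $uv+u_{x}v_{x}$ cannot be treated as a nonnegative density a priori. This is resolved by $|u_{x}v_{x}|\le uv$ from Lemma \ref{nonnegative}, which gives $0\le uv+u_{x}v_{x}\le 2uv$; the density is thereby nonnegative and dominated by $uv$, and the valley/tail splitting of the leading term, as well as all the interaction estimates, carry over verbatim. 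Finally, having obtained for each of the three functionals a bound of the form $\frac{d}{dt}\mathcal{J}\le O\!\left(e^{-(\frac{3L}{8}+\sigma_{0}t)/K}\right)$ — the time-decay coming precisely from the linear-in-$t$ separation \eqref{initial-3-3} — I would integrate over $[0,t]$; since $\int_{0}^{\infty}e^{-\sigma_{0}s/K}\,ds=K/\sigma_{0}<\infty$, this yields $\mathcal{J}(t)-\mathcal{J}(0)\le O\!\left(e^{-\sigma_{0}L/(8K)}\right)$ for all $j\in\{2,\dots,N\}$ and $t\in[0,t_{0}]$, which is the claim.
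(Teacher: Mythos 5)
Your intermediate identity for $\frac{d}{dt}\mathcal{J}^{u}_{j,K}$ is correct (it is equivalent, after integration by parts, to the Virial identity of Lemma \ref{virial} that the paper proves), and your observation that the $\Psi_{j,K}$-weighted terms must collapse, via the exact cancellation behind the conservation of $E_{u}$, into terms carrying only $\Psi'_{j,K}$ is precisely the content of that lemma. The genuine gap is in what you do next. You discard the nonpositive valley part of the leading term $\int_{\mathbb R}(uv-\dot y_{j})(u^{2}+u_{x}^{2})\Psi'_{j,K}\,dx$ and then announce that \emph{every} remaining term will be bounded by $O\bigl(e^{-(\frac{3L}{8}+\sigma_{0}t)/K}\bigr)$. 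This cannot work: the cubic interaction terms, restricted to the valley $D_{j}=[\tilde x_{j-1}(t)+\frac L4,\,\tilde x_{j}(t)-\frac L4]$, are not exponentially small in either $L$ or $t$. For instance
$\int_{D_{j}}uu_{x}^{2}v\,\Psi'_{j,K}\,dx\le\|u\|_{L^{\infty}(D_{j})}\|v\|_{L^{\infty}(D_{j})}\int_{\mathbb R}(u^{2}+u_{x}^{2})\Psi'_{j,K}\,dx$,
and by \eqref{predict-1} the prefactor is $O(\alpha)+O(e^{-L/4})$ while $\int(u^{2}+u_{x}^{2})\Psi'_{j,K}\,dx$ is merely $O(1/K)$: the product is a positive quantity of size $O(\alpha/K)$ that is \emph{constant in time}. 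Since the Proposition must hold for all $t\in[0,t_{0}]$ with $t_{0}$ arbitrarily large (up to $T$, possibly infinite), integrating a non-decaying remainder of this size produces $O(\alpha t_{0}/K)$, which destroys the claimed uniform bound $O\bigl(e^{-\sigma_{0}L/(8K)}\bigr)$.

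The mechanism you have thrown away is exactly the paper's absorption step: keep the negative leading term in full. Since $\dot y_{j}\ge c_{1}/2$ by \eqref{infspeed}, and each cubic term on $D_{j}$ is bounded pointwise by $\frac{c_{1}}{20}(u^{2}+u_{x}^{2})\Psi'_{j,K}$ (respectively by a small multiple of $(uv+u_{x}v_{x})\Psi'_{j,K}$ for the mixed functional, where the extra inequality \eqref{oneESI}, $|uv_{x}+u_{x}v|\le uv+u_{x}v_{x}$, is also needed), the valley contributions are absorbed into $-\frac{c_{1}}{2}\int(u^{2}+u_{x}^{2})\Psi'_{j,K}\,dx$, and only the contributions from $D_{j}^{c}$ survive; there the time decay comes from \eqref{Estimate}, i.e. from the linear-in-time growth of the distance between $y_{j}(t)$ and the bumps, giving $\frac CK e^{-(\sigma_{0}t+L/8)/K}$, which is integrable in $t$. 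A secondary inaccuracy of the same kind: for your leading term, on the complement of your narrow valley $|x-y_{j}|\le L/8$, boundedness of the energy alone yields only $O(e^{-L/(8K)})$ with no decay in $t$; the factor $e^{-\sigma_{0}t/K}$ is available only outside $D_{j}$. That one is repairable, because $uv-\dot y_{j}<0$ actually holds on all of $D_{j}$; but the cubic-term problem is not: without the absorption step your differential inequality has a positive, time-independent right-hand side, and the conclusion fails after integration over long time intervals.
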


The proof of this Proposition relies on the following Virial type identity.

\begin{lemma}\label{virial}
(Virial type identity). Let $(u,v)\in Y([0,T[)$, with $0<T\leq +\infty$, be a solution of equation \eqref{tcNK} that satisfies \eqref{initialdata-1}. For any smooth space function $g:\mathbb{R}\rightarrow\mathbb{R}$, there holds
\begin{align*}
\frac{d}{dt}\,&\int_{\mathbb R}\left(u^{2}+u^{2}_{x}\right)g\,dx\\
=&\int_{\mathbb R}u\Big[u^2_{x}v+2P\ast\Big(\frac{1}{2}u^{2}_{x}v+uu_{x}v_{x}+u^{2}v\Big)+2P_{x}\ast\Big(\frac{1}{2}u^{2}_{x}v_{x}\Big)\Big]g'\,dx+\int_{\mathbb R}\left(u^{2}+u^{2}_{x}\right)g'\,dx,\\
\frac{d}{dt}\,&\int_{\mathbb R}\left(v^{2}+v^{2}_{x}\right)g\,dx\\
=&\int_{\mathbb R}v\Big[v^2_{x}u+2P\ast\Big(\frac{1}{2}v^{2}_{x}u+vv_{x}u_{x}+v^{2}u\Big)+2P_{x}\ast\Big(\frac{1}{2}v^{2}_{x}u_{x}\Big)\Big]g'\,dx+\int_{\mathbb R}\left(v^{2}+v^{2}_{x}\right)g'\,dx,\\
{\rm and}\quad &\frac{d}{dt}\,\int_{\mathbb R}\left(uv+u_{x}v_{x}\right)g\,dx\\
=&\int_{\mathbb R}u\Big[vu_{x}v_{x}+P\ast\Big(\frac{1}{2}v^{2}_{x}u+vv_{x}u_{x}+v^{2}u\Big)+P_{x}\ast\left(\frac{1}{2}v^{2}_{x}u_{x}\right)\Big]g'\,dx\\
&\;\;+\int_{\mathbb R}v\Big[P\ast\Big(\frac{1}{2}u^{2}_{x}v+uu_{x}v_{x}+u^{2}v\Big)+P_{x}\ast\Big(\frac{1}{2}u^{2}_{x}v_{x}\Big)\Big]g'\,dx+\int_{\mathbb R}\left(uv+u_{x}v_{x}\right)g'\,dx.
\end{align*}
\end{lemma}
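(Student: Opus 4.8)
The plan is to read each of the three identities as a weighted (localized) version of a conservation law: setting $g\equiv 1$ forces $g'\equiv 0$ and recovers $\tfrac{d}{dt}E_u[u]=\tfrac{d}{dt}E_v[v]=\tfrac{d}{dt}H[u,v]=0$ from Theorem \ref{th-2.1}, so the whole content is that each time derivative can be written purely against $g'$. Equivalently, each of the densities $u^2+u_x^2$, $v^2+v_x^2$ and $uv+u_xv_x$ obeys a local balance law $\partial_t(\text{density})=\partial_x(\text{flux})$, and the lemma merely names the fluxes. Throughout I will use that $u,v\in H^s$, $s\ge 3$, decay at infinity with their first derivatives, so every integration by parts in $x$ produces no boundary term, and the one algebraic fact $(1-\partial_x^2)P=\delta$, i.e. $P_{xx}=P-\delta$, so that a derivative landing on a convolution gives $\partial_x(P_x\ast Q)=P\ast Q-Q$ and $\partial_x(P\ast R)=P_x\ast R$.

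First I would treat the $u$-identity. Differentiating under the integral and integrating by parts once in $x$ gives
\[
\frac{d}{dt}\int_{\mathbb R}(u^2+u_x^2)g\,dx=\int_{\mathbb R}2(u-u_{xx})\,u_t\,g\,dx-\int_{\mathbb R}2u_x u_t\,g'\,dx .
\]
I then substitute $u_t=-uvu_x-P_x\ast Q_u-P\ast R_u$ from the first line of \eqref{weakform}, with $Q_u=\tfrac12u_x^2v+uu_xv_x+u^2v$ and $R_u=\tfrac12u_x^2v_x$, and compute the pointwise balance $\partial_t(u^2+u_x^2)$. The decisive cancellations are that the convective term $-2u^2vu_x$ from $2uu_t$ is annulled by the $2u^2vu_x$ released inside $2u_xQ_u$ (via the $+Q_u$ in $\partial_x(P_x\ast Q_u)$), and that the two $uu_x^2v_x$ terms cancel. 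What then remains is a sum of exact $x$-derivatives, namely $-\partial_x(uvu_x^2)$ from the transport together with $-2\partial_x(u\,P\ast Q_u)$ and $-2\partial_x(u\,P_x\ast R_u)$ coming from the pairings $u(P_x\ast Q_u)+u_x(P\ast Q_u)=\partial_x(u\,P\ast Q_u)$ and $u(P\ast R_u)+u_x(P_x\ast R_u)=\partial_x(u\,P_x\ast R_u)+uR_u$, plus a single local remainder that the leftover $uR_u$ exactly absorbs. Integrating against $g$ and transferring each $\partial_x$ onto $g$ assembles the right-hand side claimed for the $u$-identity; the $v$-identity follows verbatim after the exchange $u\leftrightarrow v$ (hence $Q_u\leftrightarrow Q_v$, $R_u\leftrightarrow R_v$).

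For the cross identity I would expand $\partial_t(uv+u_xv_x)=u_tv+uv_t+u_{xt}v_x+u_xv_{xt}$ and integrate by parts exactly as above to reach
\[
\frac{d}{dt}\int_{\mathbb R}(uv+u_xv_x)g\,dx=\int_{\mathbb R}(n\,u_t+m\,v_t)\,g\,dx-\int_{\mathbb R}(u_tv_x+u_xv_t)\,g'\,dx,
\]
with $m=u-u_{xx}$ and $n=v-v_{xx}$. Substituting both equations of \eqref{weakform} and again invoking $P_{xx}=P-\delta$ splits the flux into the $u$-weighted block carrying $Q_v,R_v$ and the $v$-weighted block carrying $Q_u,R_u$, as in the statement.

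The step I expect to be the main obstacle is purely organizational and concentrated in this last identity: because both components are transported at the common speed $uv$, the convective and $P_{xx}=P-\delta$ contributions interlock, and one must verify that every genuinely local term (everything not of the form $\partial_x(\cdots)$) cancels in pairs, just as in the scalar Novikov case but now with the six convolution terms $P\ast Q_u,\,P_x\ast R_u,\,P\ast Q_v,\,P_x\ast R_v$ and their $x$-derivatives to keep track of simultaneously. No idea beyond $P_{xx}=P-\delta$ and repeated integration by parts is needed, but the careful bookkeeping of these cross terms—and checking that the would-be local residue vanishes so that only $g'$ survives—is where all the effort goes.
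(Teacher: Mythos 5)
Your computation is essentially the paper's: differentiate under the integral, integrate by parts, substitute the weak form \eqref{weakform}, and use $P_{xx}=P-\delta$ to track the cancellations. The only organizational difference is that you work exclusively with \eqref{weakform} and package everything as a pointwise balance law $\partial_t(\mathrm{density})=\partial_x(\mathrm{flux})$, whereas the paper splits the derivative into $2\int u m_t\,g\,dx-2\int u u_{xt}\,g'\,dx$, handles the first piece with the $m$-form of \eqref{tcNK} (integrating $-2\int u^2vm_xg$ by parts to produce $2\int u^2vm\,g'$) and the second with the weak form; the two bookkeepings are equivalent, and your cancellation pattern (convective term against the $Q_u$ released by $\partial_x(P_x\ast Q_u)=P\ast Q_u-Q_u$, with the leftover $uR_u$ completing the exact derivative $-\partial_x(uvu_x^2)$) is correct.

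There is, however, one point you glossed over, and it is exactly where your (correct) computation and the literal statement part ways. For a genuinely time-independent ``space function'' $g$, your balance law yields
\begin{equation*}
\frac{d}{dt}\int_{\mathbb R}\left(u^{2}+u^{2}_{x}\right)g\,dx
=\int_{\mathbb R}u\Big[u^2_{x}v+2P\ast\Big(\tfrac{1}{2}u^{2}_{x}v+uu_{x}v_{x}+u^{2}v\Big)+2P_{x}\ast\Big(\tfrac{1}{2}u^{2}_{x}v_{x}\Big)\Big]g'\,dx,
\end{equation*}
\emph{without} the trailing term $\int_{\mathbb R}(u^{2}+u^{2}_{x})g'\,dx$ that appears in the statement (and likewise for the other two identities). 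So your claim that the computation ``assembles the right-hand side claimed'' is not literally true: it assembles the right-hand side minus those terms, and no correct computation can produce them, since $\frac{d}{dt}\int(u^2+u_x^2)g\,dx=2\int(uu_t+u_xu_{xt})g\,dx$ exactly when $\partial_t g=0$. The anomaly is on the paper's side: its proof inserts $\int(u^2+u_x^2)g'\,dx$ in the very first line with no justification, and in the actual application (Proposition 4.2, with the moving weight $g=\Psi_{i,K}=\Psi_K(\cdot-y_i(t))$) that term is replaced by the chain-rule contribution $-\dot{y}_{i}\int(u^{2}+u^{2}_{x})\Psi'_{i,K}\,dx$, which is what your version of the identity produces. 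You should have flagged this mismatch explicitly rather than asserting agreement with the stated formula; as written, your proposal proves the corrected identity but does not notice that it differs from the one it set out to prove.
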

\begin{proof}
By using the weak form of $u$, integrating by parts, we calculate
\begin{align*}
\frac{d}{dt}\,&\int_{\mathbb R}\left(u^{2}+u^{2}_{x}\right)g\,dx=2\int_{\mathbb R}\left(uu_{t}+u_{x}u_{xt}\right)g\,dx+\int_{\mathbb R}\left(u^{2}+u^{2}_{x}\right)g'\,dx\\
=&2\int_{\mathbb R}\left(uu_{t}-uu_{xxt}-uu_{xt}\right)g'\,dx+\int_{\mathbb R}\left(u^{2}+u^{2}_{x}\right)g'\,dx\\
=&\int_{\mathbb R}\Big(2u^2vm-2u\Big[u^2v-\frac{1}{2}u^2_{x}v-uvu_{xx}-P\ast\Big(\frac{1}{2}\,u^2_{x}v+uu_{x}v_{x}+u^2v\Big)-P_{x}\ast\Big(\frac{1}{2}\,u^2_{x}v_{x}\Big)\Big]\Big)g'\,dx\\
&\;\;+\int_{\mathbb R}\left(u^{2}+u^{2}_{x}\right)g'\,dx\\
=&\int_{\mathbb R}\Big[uu^2_{x}v+2uP\ast\Big(\frac{1}{2}u^{2}_{x}v+uu_{x}v_{x}+u^{2}v\Big)+2uP_{x}\ast\Big(\frac{1}{2}u^{2}_{x}v_{x}\Big)\Big]g'\,dx+\int_{\mathbb R}\left(u^{2}+u^{2}_{x}\right)g'\,dx.
\end{align*}
Similarly, we have
\begin{eqnarray*}
\begin{aligned}
\frac{d}{dt}\,&\int_{\mathbb R}\left(v^{2}+v^{2}_{x}\right)g\,dx=\int_{\mathbb R}\Big[vv^2_{x}u+2vP\ast\Big(\frac{1}{2}v^{2}_{x}u+vv_{x}u_{x}+v^{2}u\Big)+2vP_{x}\ast\Big(\frac{1}{2}v^{2}_{x}u_{x}\Big)\Big]g'\,dx\\
&\qquad\quad\qquad\quad\qquad\quad+\int_{\mathbb R}\left(v^{2}+v^{2}_{x}\right)g'\,dx.\\
\end{aligned}
\end{eqnarray*}
Using the weak form of $(u,v)$ and its first derivative, integrating by parts, we obtain
\begin{align*}
\frac{d}{dt}\,&\int_{\mathbb R}\left(uv+u_{x}v_{x}\right)g\,dx\\
=&\int_{\mathbb R}\left(uv_{t}+u_{t}v-u_{xxt}v-uv_{xxt}-u_{xt}v-uv_{xt}\right)g\,dx+\int_{\mathbb R}\left(uv+u_{x}v_{x}\right)g'\,dx\\
=&\int_{\mathbb R}\left(un+mv-u_{xt}v-uv_{xt}\right)g\,dx+\int_{\mathbb R}\left(uv+u_{x}v_{x}\right)g'\,dx\\
=&\int_{\mathbb R}\Big[uvu_{x}v_{x}+uP\ast\Big(\frac{1}{2}v^{2}_{x}u+vv_{x}u_{x}+v^{2}u\Big)+uP_{x}\ast\Big(\frac{1}{2}v^{2}_{x}u_{x}\Big)\Big]g'\,dx\\
&\;\;+\int_{\mathbb R}\Big[vP\ast\Big(\frac{1}{2}u^{2}_{x}v+uu_{x}v_{x}+u^{2}v\Big)+vP_{x}\ast\Big(\frac{1}{2}u^{2}_{x}v_{x}\Big)\Big]g'\,dx+\int_{\mathbb R}\left(uv+u_{x}v_{x}\right)g'\,dx.
\end{align*}
\end{proof}

\begin{proof} [Proof of Proposition 4.2.]
We first note that, combining \eqref{initial-3-1} and \eqref{initial-3-4}, it holds for $i=2,..,N$,
\begin{eqnarray}\label{infspeed}
\begin{aligned}
\dot{y}_{i}(t)=\frac{\dot{\tilde{x}}_{i-1}(t)+\dot{\tilde{x}}_{i}(t)}{2}=\frac{c_{i-1}+c_{i}}{2}+O(\sqrt{\alpha})+O(L^{-1})\geq \frac{c_1}{2}.
\end{aligned}
\end{eqnarray}
Recall that the assumption ensures that $u\geq 0$ and $v\geq 0$ on $\mathbb R$. Now, applying the Virial type identity with $g=\Psi_{i,K}$ and using \eqref{infspeed}, we get
\begin{align*}
\frac{d}{dt}\,\mathcal{J}^{u}_{i,K}(t)=&-\dot{y}_{i}\,\int_{\mathbb R}\left(u^{2}+u^{2}_{x}\right)\Psi'_{i,K}\,dx\\
&\;\;+\int_{\mathbb R}\Big[uu^2_{x}v+2uP\ast\Big(\frac{1}{2}u^{2}_{x}v+uu_{x}v_{x}+u^{2}v\Big)+2uP_{x}\ast\Big(\frac{1}{2}u^{2}_{x}v_{x}\Big)\Big]\Psi'_{i,K}\,dx\\
\leq &-\frac{c_1}{2}\int_{\mathbb R}\left(u^{2}+u^{2}_{x}\right)\Psi'_{i,K}\,dx+\int_{\mathbb R}uu^2_{x}v\Psi'_{i,K}\,dx\\
&\;\;+\int_{\mathbb R}\Big[2uP\ast\Big(\frac{1}{2}u^{2}_{x}v+uu_{x}v_{x}+u^{2}v\Big)\Big]\Psi'_{i,K}\,dx+\int_{\mathbb R}\Big[2uP_{x}\ast\Big(\frac{1}{2}u^{2}_{x}v_{x}\Big)\Big]\Psi'_{i,K}\,dx\\
\triangleq &-\frac{c_1}{2}\int_{\mathbb R}\left(u^{2}+u^{2}_{x}\right)\Psi'_{i,K}\,dx+\widetilde{I}_{1,1}+\widetilde{I}_{1,2}+\widetilde{I}_{1,3}.
\end{align*}
We claim that for the terms $\tilde{I}_{1,i}$, $i=1,2,3$, it holds
\begin{align*}
\widetilde{I}_{1,i}\leq \frac{c_1}{20}\int_{\mathbb R}\left(u^{2}+u^{2}_{x}\right)\Psi'_{i,K}\,dx+\frac{C}{K}\|u_{0}\|^{3}_{H^{1}(\mathbb R)}\|v_{0}\|_{H^{1}(\mathbb R)}e^{-\frac{1}{K}(\sigma_{0}t+\frac{L}{8})}.
\end{align*}
Indeed, we divide $\mathbb R$ into two regions $D_{i}$ and $D^{c}_{i}$ with
\begin{align*}
D_{i}=\left[\tilde{x}_{i-1}(t)+\frac{L}{4},\tilde{x}_{i}(t)-\frac{L}{4}\right],\;i=2,...,N.
\end{align*}
Combining \eqref{initial-3-3} and \eqref{initial-3-4}, one can check that for $x\in D^c_{i}$,
\begin{eqnarray}\label{Estimate}
\begin{aligned}
\left|x-y_{i}(t)\right|\geq \frac{\tilde{x}_{i}(t)-\tilde{x}_{i-1}(t)}{2}-\frac{L}{4}\geq \frac{c_{i}-c_{i-1}}{4}\,t+\frac{L}{8}\geq \sigma_{0}t+\frac{L}{8}.
\end{aligned}
\end{eqnarray}
Let us begin by an estimate of $\widetilde{I}_{1,1}$. Using \eqref{Estimate}, Sobolev imbedding and the exponential decay of $\Psi'_{i,K}$ on $D^c_{i}$, we get
\begin{align*}
\widetilde{I}_{1,1}=&\int_{D_{i}}uu^2_{x}v\Psi'_{i,K}\,dx+\int_{D^c_{i}}uu^2_{x}v\Psi'_{i,K}\,dx\\
\leq &\|u\|_{L^\infty(D_{i})}\|v\|_{L^\infty(D_{i})}\int_{D_{i}}\left(u^2_{x}+u^2\right)\Psi'_{i,K}\,dx+\|\Psi'_{i,K}\|_{L^\infty(D^{c}_{i})}\|u\|_{L^\infty(\mathbb R)}\|v\|_{L^\infty(\mathbb R)}\|u\|^2_{H^1}.
\end{align*}
Now, using the exponential decay of $\varphi_{c}$ and $\psi_{c}$ on $D_{i}$, it holds
\begin{eqnarray}\label{predict-1}
\begin{aligned}
&\|u\|_{L^\infty(D_{i})}\leq \Big\|u-\sum^{N}_{j=1}\varphi(\cdot-\tilde{x}_{j}(t))\Big\|_{L^\infty(D_{i})}+\sum^{N}_{j=1}\left\|\varphi(\cdot-\tilde{x}_{j}(t))\right\|_{L^\infty(D_{i})}\leq  O(\sqrt{\alpha})+O(e^{-\frac{L}{8}}),\\
& \|v\|_{L^\infty(D_{i})}\leq \Big\|v-\sum^{N}_{j=1}\psi(\cdot-\tilde{x}_{j}(t))\Big\|_{L^\infty(D_{i})}+\sum^{N}_{j=1}\left\|\psi(\cdot-\tilde{x}_{j}(t))\right\|_{L^\infty(D_{i})}\leq  O(\sqrt{\alpha})+O(e^{-\frac{L}{8}}).
\end{aligned}
\end{eqnarray}
Therefore, for $0<\alpha<\alpha_{0}$ and $L>L_{0}>0$, with $\alpha_{0}\ll 1$ and $L_{0}\gg 1$, we obtain
\begin{align*}
\widetilde{I}_{1,1} \leq \frac{c_1}{20}\int_{\mathbb R}\left(u^{2}+u^{2}_{x}\right)\Psi'_{i,K}\,dx+\frac{C}{K}\|u_{0}\|^{3}_{H^{1}(\mathbb R)}\|v_{0}\|_{H^{1}(\mathbb R)}e^{-\frac{1}{K}(\sigma_{0}t+\frac{L}{8})}.
\end{align*}
Before estimating the term $\widetilde{I}_{1,2}$, using that $|\Psi'''_{i,K}|\leq 10K^{-2}\Psi'_{i,K}$, we have
\begin{align*}
\left(1-\partial^2_{x}\right)\Psi'_{i,K}(x)=\Psi'_{i,K}(x)-\frac{1}{K^2}\Psi'''_{i,K}(x)\geq \left(1-\frac{10}{K^2}\right)\Psi'_{i,K}(x),\; \forall x\in \mathbb R,
\end{align*}
and since $K>4$, it holds
\begin{eqnarray}\label{convolutionESI}
\begin{aligned}
\left(1-\partial^2_{x}\right)^{-1}\Psi'_{i,K}(x)\leq \Big(1-\frac{10}{K^2}\Big)^{-1}\Psi'_{i,K}(x),\; \forall x\in \mathbb R.
\end{aligned}
\end{eqnarray}
Next, the estimate of $\widetilde{I}_{1,2}$ gives us
\begin{align*}
\widetilde{I}_{1,2}=&\int_{D_{i}}\Big[2uP\ast\Big(\frac{1}{2}u^{2}_{x}v+uu_{x}v_{x}+u^{2}v\Big)\Big]\Psi'_{i,K}\,dx+\int_{D^c_{i}}\Big[2uP\ast\Big(\frac{1}{2}u^{2}_{x}v+uu_{x}v_{x}+u^{2}v\Big)\Big]\Psi'_{i,K}\,dx\\
\leq\; & \|u\|_{L^\infty(D_{i})}\int_{\mathbb R}P\ast\left|u^{2}_{x}v+2uu_{x}v_{x}+2u^{2}v\right|\Psi'_{i,K}\,dx\\
&\quad +\|\Psi'_{i,K}\|_{L^\infty(D^c_{i})}\|u\|_{L^\infty(\mathbb R)}\int_{\mathbb R}\left|u^{2}_{x}v+2uu_{x}v_{x}+2u^{2}v\right|\,dx\\
\leq\; & 3\|u\|_{L^\infty(D_{i})}\|v\|_{L^\infty(D_{i})}\int_{\mathbb R}\left|u^{2}_{x}+u^{2}\right|\Psi'_{i,K}\,dx+\frac{C}{K}\|u_{0}\|^{3}_{H^{1}(\mathbb R)}\|v_{0}\|_{H^{1}(\mathbb R)}e^{-\frac{1}{K}(\sigma_{0}t+\frac{L}{8})}\\
\leq\; & \frac{c_1}{20}\int_{\mathbb R}\left(u^{2}+u^{2}_{x}\right)\Psi'_{i,K}\,dx+\frac{C}{K}\|u_{0}\|^{3}_{H^{1}(\mathbb R)}\|v_{0}\|_{H^{1}(\mathbb R)}e^{-\frac{1}{K}(\sigma_{0}t+\frac{L}{8})},
\end{align*}
where the Young's inequality is used, the exponential decay of $\Psi'_{i,K}$ on $D^c_{i}$, \eqref{predict-1} and \eqref{convolutionESI}.
Let us tackle now the estimate of $\widetilde{I}_{1,3}$. On $D^c_{i}$ we have
\begin{align*}
&\int_{D^c_{i}}\left[uP_{x}\ast(u^2_{x}v_{x})\right]\Psi'_{i,K}(t)\,dx \leq  \|\Psi'_{i,K}\|_{L^\infty(D^c_{i})}\|v\|_{L^\infty(D^c_{i})}\int_{\mathbb R}u^2_{x}\left[P\ast u\right]\,dx\\
&\quad \leq \;  \|\Psi'_{i,K}\|_{L^\infty(D^c_{i})}\|v\|_{L^\infty(D^c_{i})}\|P\ast u\|_{L^\infty(\mathbb R)}\int_{\mathbb R}u^2_{x}\,dx.
\end{align*}
Applying the Holder inequality, we have for all $x\in \mathbb R$,
\begin{eqnarray}\label{convolutionESI-1}
\begin{aligned}
P\ast u= \frac{1}{2}\int_{\mathbb R}e^{-|x-y|}u(y)\,dy\leq \frac{1}{2}\left(\int_{\mathbb R}e^{-2|x-y|}\,dy\right)^{\frac{1}{2}}\left(\int_{\mathbb R}u^2(y)\,dy\right)^{\frac{1}{2}}\leq \frac{1}{2}\|u\|_{L^2(\mathbb R)},
\end{aligned}
\end{eqnarray}
and then using \eqref{convolutionESI-1} and the exponential decay of $\Psi'_{i,K}$ on $D^c_{i}$, it holds
\begin{align*}
\int_{D^c_{i}}\left[uP_{x}\ast(u^2_{x}v_{x})\right]\Psi'_{i,K}(t)\,dx\leq \frac{C}{K}\|u_{0}\|^{3}_{H^{1}(\mathbb R)}\|v_{0}\|_{H^{1}(\mathbb R)}e^{-\frac{1}{K}(\sigma_{0}t+\frac{L}{8})}.
\end{align*}
The estimate of $\widetilde{I}_{1,3}$ on $D_{i}$ leads to
\begin{align*}
\int_{D_{i}}\left[uP_{x}\ast(u^2_{x}v_{x})\right]\Psi'_{i,K}(t)\,dx \leq & \|u\|_{L^\infty(D_{i})}\|v\|_{L^\infty(D_{i})}\int_{\mathbb R}\left[P\ast\Psi'_{i,K}(t)\right]u^{2}_{x}\,dx\\
\leq&  \frac{c_1}{20}\int_{\mathbb R}\left(u^{2}+u^{2}_{x}\right)\Psi'_{i,K}\,dx.
\end{align*}
Therefore, for $0<\alpha<\alpha_{0}$ and $L>L_{0}>0$, with $\alpha_{0}\ll 1$ and $L_{0}\gg 1$, it holds
\begin{align*}
\frac{d}{dt}\mathcal{J}^{u}_{j,K}(t)\leq \frac{C}{K}\|u_{0}\|^{3}_{H^{1}(\mathbb R)}\|v_{0}\|_{H^{1}(\mathbb R)}e^{-\frac{1}{K}(\sigma_{0}t+\frac{L}{8})}.
\end{align*}
Integrating between $0$ and $t$, we obtain
\begin{align*}
\mathcal{J}^{u}_{j,K}(t)-\mathcal{J}^{u}_{j,K}(0)\leq O(e^{-\frac{\sigma_{0}L}{8K}}).
\end{align*}
Similarly, we also obtain
\begin{align*}
\mathcal{J}^{v}_{j,K}(t)-\mathcal{J}^{v}_{j,K}(0)\leq O(e^{-\frac{\sigma_{0}L}{8K}}).
\end{align*}
Now, we need to prove the third inequality of this proposition. Applying the Virial type identity with $g=\Psi_{i,K}$ and using \eqref{infspeed}, we get
\begin{eqnarray*}
\begin{aligned}
\frac{d}{dt}\mathcal{J}^{u,v}_{j,K}(t)
=&\int_{\mathbb R}\Big[uvu_{x}v_{x}+uP\ast\Big(\frac{1}{2}v^{2}_{x}u+vv_{x}u_{x}+v^{2}u\Big)+uP_{x}\ast\Big(\frac{1}{2}v^{2}_{x}u_{x}\Big)\Big]\Psi'_{i,K}\,dx\\
&\;\;+\int_{\mathbb R}\Big[vP\ast\Big(\frac{1}{2}u^{2}_{x}v+uu_{x}v_{x}+u^{2}v\Big)+uP_{x}\ast\Big(\frac{1}{2}u^{2}_{x}v_{x}\Big)\Big]\Psi'_{i,K}\,dx\\
&\;\;-\dot{y}_{i}\int_{\mathbb R}\left(uv+u_{x}v_{x}\right)\Psi'_{i,K}\,dx\\
=&\int_{\mathbb R}\left[uv(uv+u_{x}v_{x})+uP\ast\left(v(uv+u_{x}v_{x})\right)+vP\ast\left(u(uv+u_{x}v_{x})\right)\right]\Psi'_{i,K}\,dx\\
&\;\;+\int_{\mathbb R}\Big[-\frac{1}{2}u^2v^2+vP\ast\Big(\frac{1}{2}u^2_{x}v\Big)+vP_{x}\ast\Big(\frac{1}{2}u^2_{x}v_{x}\Big)\Big]\Psi'_{i,K}\,dx\\
&\;\;+\int_{\mathbb R}\Big[-\frac{1}{2}u^2v^2+uP\ast\Big(\frac{1}{2}v^2_{x}u\Big)+uP_{x}\ast\Big(\frac{1}{2}v^2_{x}u_{x}\Big)\Big]\Psi'_{i,K}\,dx\\
&\;\;-\dot{y}_{i}\int_{\mathbb R}\left(uv+u_{x}v_{x}\right)\Psi'_{i,K}\,dx\\
\triangleq &\widetilde{I}_{2,1}+\widetilde{I}_{2,2}+\widetilde{I}_{2,3}-\dot{y}_{i}\int_{\mathbb R}\left(uv+u_{x}v_{x}\right)\Psi'_{i,K}\,dx.
\end{aligned}
\end{eqnarray*}
In the same way as the proof of the first inequality, we obtain
\begin{align*}
\widetilde{I}_{2,1} \leq \frac{c_1}{20}\int_{\mathbb R}\left(uv+u_{x}v_{x}\right)\Psi'_{i,K}\,dx+\frac{C}{K}\|u_{0}\|^{2}_{H^{1}(\mathbb R)}\|v_{0}\|^{2}_{H^{1}(\mathbb R)}e^{-\frac{1}{K}(\sigma_{0}t+\frac{L}{8})}.
\end{align*}
For this term $\widetilde{I}_{2,2}$, we note
\begin{align*}
|u_x|\leq u \quad {\rm and} \quad |v_x|\leq v.
\end{align*}
Thus,
\begin{align*}
(u+u_{x})(v+v_{x})\geq 0\quad {\rm and} \quad (u-u_{x})(v-v_{x})\geq 0,
\end{align*}
which is
\begin{eqnarray}\label{oneESI}
\begin{aligned}
|uv_{x}+u_{x}v| \leq |uv+u_{x}v_{x}|.
\end{aligned}
\end{eqnarray}
Thanks to this estimate \eqref{oneESI}, we have
\begin{align*}
\widetilde{I}_{2,2}=&\frac{1}{2}\int_{\mathbb R}\left[(P_{xx}-P)\ast u^2v+P\ast\left(u^2_{x}v\right)+P_{x}\ast\left(u^2_{x}v_{x}\right)\right]v\Psi'_{i,K}\,dx\\
=&\frac{1}{2}\int_{\mathbb R}\left[P\ast \left((u^2_{x}-u^2)v\right)+P_{x}\ast(2uu_{x}v+u^2v_{x}+u^2_{x}v_{x})\right]v\Psi'_{i,K}\,dx\\
\leq &\frac{1}{2}\int_{\mathbb R}\left[P_{x}\ast\left((uv+u_{x}v_{x})u_{x}\right)+P_{x}\ast\left((uv_{x}+u_{x}v)u\right)\right]v\Psi'_{i,K}\,dx\\
\leq &\frac{1}{2}\int_{\mathbb R}\left[P_{x}\ast\left((uv+u_{x}v_{x})u_{x}\right)+P\ast\left((uv+u_{x}v_{x})u\right)\right]v\Psi'_{i,K}\,dx.
\end{align*}
In the same way as proof the inequality of the first inequality, we find
\begin{align*}
\widetilde{I}_{2,2}\leq \frac{c_1}{20}\int_{\mathbb R}\left(uv+u_{x}v_{x}\right)\Psi'_{i,K}\,dx+\frac{C}{K}\|u_{0}\|^{2}_{H^{1}(\mathbb R)}\|v_{0}\|^{2}_{H^{1}(\mathbb R)}e^{-\frac{1}{K}(\sigma_{0}t+\frac{L}{8})}.
\end{align*}
Similarly, we have
\begin{align*}
\widetilde{I}_{2,3}\leq \frac{c_1}{20}\int_{\mathbb R}\left(uv+u_{x}v_{x}\right)\Psi'_{i,K}\,dx+\frac{C}{K}\|u_{0}\|^{2}_{H^{1}(\mathbb R)}\|v_{0}\|^{2}_{H^{1}(\mathbb R)}e^{-\frac{1}{K}(\sigma_{0}t+\frac{L}{8})}.
\end{align*}
Therefore, for $0<\alpha<\alpha_{0}$ and $L>L_{0}>0$, with $\alpha_{0}\ll 1$ and $L_{0}\gg 1$, it holds
\begin{align*}
\frac{d}{dt}\mathcal{J}^{u,v}_{j,K}(t)\leq \frac{C}{K}\|u_{0}\|^{3}_{H^{1}(\mathbb R)}\|v_{0}\|_{H^{1}(\mathbb R)}e^{-\frac{1}{K}(\sigma_{0}t+\frac{L}{8})}.
\end{align*}
Integrating between $0$ and $t$, we obtain
\begin{align*}
\mathcal{J}^{u,v}_{j,K}(t)-\mathcal{J}^{u,v}_{j,K}(0)\leq O(e^{-\frac{\sigma_{0}L}{8K}}).
\end{align*}
This completes the proof of this proposition.
\end{proof}

\subsection{A localized and a global estimate}
We define the function $\Phi_{i}=\Phi_{i}(t,x)$ by $\Phi_{1}=1-\Psi_{2,K}=1-\Psi_{K}(\cdot-y_{2}(t))$, $\Phi_{N}=\Psi_{N,K}=\Psi_{K}(\cdot-y_{N}(t))$ and for $i=2,...,N-1$,
\begin{eqnarray*}
\begin{aligned}
\Phi_{i}=\Psi_{i,K}-\Psi_{i+1,K}=\Psi_{K}(\cdot-y_{i}(t))-\Psi_{K}(\cdot-y_{i+1}(t)),
\end{aligned}
\end{eqnarray*}
where $\Psi_{K}$ and the $y_{i}$ are defined in the previous section. It is easy to check that $\sum^{N}_{i=1}\Phi_{i,K}\equiv1$. We take $L>0$ and $L/K>0$ large enough so that $\Phi_{i}$ satisfies
\begin{eqnarray}\label{weightesi-1}
\begin{aligned}
\left|1-\Phi_{i,K}\right|\leq 4e^{-\frac{L}{8K}} \quad {\rm on}\quad \Big[\tilde{x}_{i}-\frac{L}{4},\tilde{x}_{i}+\frac{L}{4}\Big],
\end{aligned}
\end{eqnarray}
and
\begin{eqnarray}\label{weightesi-2}
\begin{aligned}
\left|\Phi_i\right|\leq 4e^{-\frac{L}{8K}} \quad {\rm on}\quad \Big[\tilde{x}_{j}-\frac{L}{4},\tilde{x}_{j}+\frac{L}{4}\Big]\,\; {\rm whenever}\;\; j\ne i.
\end{aligned}
\end{eqnarray}
We now use the following localized version of $E_{u}$, $E_{v}$, $H$ and $F$ defined for $i\in \{1,...,N\}$, by
\begin{eqnarray}\label{weightesi-3}
\begin{aligned}
&E_{ui}(u)=\int_{\mathbb R}\left(u^2+u^2_{x}\right)\Phi_{i}\,dx, \quad E_{vi}(v)=\int_{\mathbb R}\left(v^2+v^2_{x}\right)\Phi_{i}\,dx, \\
& H_{i}(u,v)=\int_{\mathbb R}\left(uv+u_{x}v_{x}\right)\Phi_{i}\,dx\\
{\rm and}\;\; &F_{i}(u,v)=\int_{\mathbb R}\Big(u^2v^2+\frac{1}{3}u^2v_x^2+\frac{1}{3}v^2u_x^2+\frac{4}{3}uvu_xv_x-\frac{1}{3}u_x^2v_x^2\Big)\Phi_{i}\,dx.
\end{aligned}
\end{eqnarray}
Please note that henceforth we take $K=\sqrt{L}/8$.

The following lemma gives a localized version of \eqref{functionalESI}. Note that the functionals $H_{i}$ and $F_{i}$ do not depend on time in the statement below since we fix $\tilde{x}_{1}<...<\tilde{x}_{N}$.
\begin{lemma}\label{localizedESI}
Let be given $N$ real numbers $\tilde{x}_{1}<...<\tilde{x}_{N}$ with $\tilde{x}_{i}-\tilde{x}_{i-1}\geq 2L/3$. Define the $J_{i}$ as in \eqref{initial-3-4} and assume that, for $i=1,...,N$, there exists $x_{i}\in J_{i}$ such that $|x_{i}-\tilde{x}_{i}|\leq L/12$ and $u(x_{i})v(x_{i})=\max_{x\in J_{i}}uv:=M_{i}$. Then for any $(u,v)\in H^{1}(\mathbb R)\times H^{1}(\mathbb R)$, it holds
\begin{eqnarray}\label{localizedESI-1}
\begin{aligned}
\frac{4}{3}M^2_{i}-\frac{4}{3}M_{i}H_{i}(u,v)+F_{i}(u,v)\leq O(L^{-\frac{1}{2}}),\quad i\in\{1,...,N\}.
\end{aligned}
\end{eqnarray}
\end{lemma}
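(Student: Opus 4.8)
The plan is to carry out the argument of Lemma \ref{lem1.4} cell by cell, replacing the global maximizer $\xi$ by the local maximizer $x_i\in J_i$ and inserting the cut-off $\Phi_i$ into the two energy identities of Lemmas \ref{lem1.2} and \ref{lem1.3}. First I introduce the localized auxiliary functions $g^i_1,g^i_2,h^i$ defined exactly by the formulas \eqref{functiong1}, \eqref{functiong2} and \eqref{functionalh} but with the splitting point $x_i$ in place of $\xi$. For the solutions under consideration the pointwise bounds $|u_x|\le u$ and $|v_x|\le v$ of Lemma \ref{nonnegative} hold, so that $g^i_1 g^i_2\ge 0$ on $\mathbb R$ and $h^i\le \tfrac43 uv$ pointwise, exactly as in the proof of Lemma \ref{lem1.4}.

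Next I repeat the integration by parts of Lemmas \ref{lem1.2} and \ref{lem1.3} while carrying the weight $\Phi_i$. The only new contributions arise from differentiating $\Phi_i$: writing $(uv)(uv)_x=\tfrac12[(uv)^2]_x$ and integrating the derivative terms against $\Phi_i$ over the two half-lines $\{x<x_i\}$ and $\{x>x_i\}$ produces boundary values at $x_i$ together with integrals against $\Phi_i'$. Using $(uv)(x_i)=M_i$ this gives
\[
\int_{\mathbb R} g^i_1 g^i_2\,\Phi_i\,dx=H_i(u,v)-2M_i\,\Phi_i(x_i)+R_H,\qquad
\int_{\mathbb R} h^i g^i_1 g^i_2\,\Phi_i\,dx=F_i(u,v)-\tfrac43 M_i^2\,\Phi_i(x_i)+R_F,
\]
where $R_H=\int_{-\infty}^{x_i}uv\,\Phi_i'-\int_{x_i}^{\infty}uv\,\Phi_i'$ and $R_F=\tfrac23\int_{-\infty}^{x_i}(uv)^2\Phi_i'-\tfrac23\int_{x_i}^{\infty}(uv)^2\Phi_i'$, and $H_i,F_i$ are as in \eqref{weightesi-3}.

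Because $g^i_1 g^i_2\Phi_i\ge 0$ and $h^i\le\tfrac43 uv$, I may replace $h^i$ by $\tfrac43 M_i$ in the second identity at the cost of a nonnegative error $R'=\tfrac43\int(uv-M_i)^+ g^i_1 g^i_2\Phi_i\,dx$, which is supported where $uv>M_i$, i.e. outside $J_i$. Combining the two identities and rearranging,
\[
\tfrac43 M_i^2\,\Phi_i(x_i)-\tfrac43 M_i H_i(u,v)+F_i(u,v)\le \tfrac43 M_i R_H-R_F+R'.
\]
It then remains to check that every term on the right is $O(L^{-1/2})$ and that the prefactor $\Phi_i(x_i)$ may be replaced by $1$. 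For the latter, $|x_i-\tilde x_i|\le L/12<L/4$ and \eqref{weightesi-1} give $\Phi_i(x_i)=1+O(e^{-\sigma_0 L/(8K)})$, so $\tfrac43 M_i^2\Phi_i(x_i)=\tfrac43 M_i^2+O(e^{-\sqrt L})$ with $K=\sqrt L/8$. For $R_H$ and $R_F$, the scaling $\|\Phi_i'\|_{L^\infty}=O(1/K)=O(L^{-1/2})$ combined with the Sobolev bound $u,v\in L^4$ yields $R_H,R_F=O(L^{-1/2})$; substituting gives \eqref{localizedESI-1}.

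The \emph{main obstacle} is the control of $R'$, the error incurred by replacing $uv$ by $M_i$. This replacement is only legitimate because $uv\le M_i$ throughout $J_i$, whereas on the complement of $J_i$, where $uv$ may exceed $M_i$ near a neighbouring bump, the weight $\Phi_i$ must be shown to be negligible. Here the separation hypothesis $\tilde x_i-\tilde x_{i-1}\ge 2L/3$ together with the cut-off estimate \eqref{weightesi-2} is decisive: it forces $\Phi_i\le 4e^{-L/(8K)}=4e^{-\sqrt L}$ on the neighbourhoods of the other bumps, so that $R'$ is exponentially small in $L$ and in particular $O(L^{-1/2})$. The choice $K=\sqrt L/8$ is precisely what balances the two competing error mechanisms, keeping $\|\Phi_i'\|_{L^\infty}=O(L^{-1/2})$ while simultaneously rendering the localization tails $e^{-L/(8K)}=e^{-\sqrt L}$ negligible. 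Thus the algebraic skeleton is identical to that of Lemma \ref{lem1.4}, and the real work lies in the careful bookkeeping of these $\Phi_i'$- and tail-errors.
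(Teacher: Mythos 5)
Your algebraic skeleton coincides with the paper's own proof: the same localized functions $g^i_1,g^i_2,h^i$ split at $x_i$, the same weighted integration by parts producing the boundary terms $2M_i\Phi_i(x_i)$ and $\tfrac43M_i^2\Phi_i(x_i)$ together with $\Phi_i'$-errors of size $O(1/K)=O(L^{-1/2})$, and the same final step of replacing $h^i$ by $\tfrac43M_i$ using $h^i\le\tfrac43uv$ and $g^i_1g^i_2\ge0$. The identities, signs, and the treatment of $\Phi_i(x_i)=1+O(e^{-\sqrt L})$ are all correct.

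The gap is in your control of $R'=\tfrac43\int(uv-M_i)^+\,g^i_1g^i_2\,\Phi_i\,dx$. You argue that $(uv-M_i)^+$ is supported outside $J_i$ and that \eqref{weightesi-2} makes $\Phi_i$ exponentially small there. But \eqref{weightesi-2} bounds $\Phi_i$ only on the intervals $[\tilde x_j-L/4,\tilde x_j+L/4]$ with $j\ne i$, i.e.\ near the \emph{other} bump centers; it says nothing about the part of $J_i^c$ adjacent to the endpoints $y_i,y_{i+1}$ of $J_i$. The cut-off $\Phi_i$ transitions on scale $K=\sqrt L/8$ exactly at $y_i$ and $y_{i+1}$, so at distance $O(K)$ beyond $y_{i+1}$, say, one has $\Phi_i$ of order one, and for a pair $(u,v)$ subject only to the stated hypotheses (maximality of $uv$ at $x_i$ \emph{on} $J_i$, plus $|u_x|\le u$, $|v_x|\le v$) nothing prevents $uv$ from exceeding $M_i$ there. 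Hence $R'$ is not controlled by the argument you give. This is precisely where the paper does extra work: it enlarges $J_i$ to $\hat J_1=(-\infty,y_2+aL]$, $\hat J_i=[y_i-aL,y_{i+1}+aL]$, $\hat J_N=[y_N-aL,+\infty)$, and proves — using the $H^1$-closeness of $(u,v)$ to the peakon train and the modulation estimates of Proposition \ref{implicitfunction}, via the bound $uv\le c_{i+1}e^{-(3/8-a)L}+c_ie^{-3L/8}+O(\sqrt\alpha)+O(e^{-L/4})\le M_i$ on the extension strips — that $x_i$ remains the maximizer of $uv$ on all of $\hat J_i$; only outside $\hat J_i$, at distance at least $aL$ from $[y_i,y_{i+1}]$ with $aL>10\sqrt L\gg K$, is $\Phi_i$ genuinely exponentially small. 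Without this step, which necessarily uses information about $(u,v)$ beyond the bare hypotheses of the lemma (and cannot be recovered from \eqref{weightesi-2} alone), the replacement of $h^i$ by $\tfrac43M_i$ is unjustified and your proof does not close.
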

\begin{proof}
Let $i\in \{1,...,N\}$ be fixed. We introduce the functions $g_{u}$, $g_{v}$ and $h$ defined by
\begin{equation*}
g_{u}(x)=\left\{
\begin{aligned}
&u-u_{x},\quad &x<x_{i},\\
&u+u_{x},&x>x_{i},
\end{aligned}
\right.
\end{equation*}
\begin{equation*}
g_{v}(x)=\left\{
\begin{aligned}
&v-v_{x},\quad &x<x_{i},\\
&v+v_{x},&x>x_{i},
\end{aligned}
\right.
\end{equation*}
and\\
\begin{equation*}
h(x)=\left\{
\begin{aligned}
&uv-\frac13 (uv)_x-\frac{1}{3}u_{x}v_{x},\quad &x<x_{i},\\
&uv+\frac 13 (uv)_x+\frac{1}{3}u_{x}v_{x},&x>x_{i}.
\end{aligned}
\right.
\end{equation*}
Integrating by parts we compute
\begin{eqnarray}\label{localizedfuncESI-1}
\begin{aligned}
&\int_{\mathbb R}h(x)g_{u}(x)g_{v}(x)\Phi_{i}dx\\
=&\int_{-\infty}^{x_i}{\Big(uv-\frac{1}{3}(uv)_x-\frac{1}{3}u_{x}v_{x}\Big)\Big(uv-(uv)_x+u_{x}v_{x}\Big)}\Phi_{i}\,dx\\
&\qquad +\int_{x_i}^{\infty}{\Big(uv+\frac 13 (uv)_x-\frac{1}{3}u_{x}v_{x}\Big)\Big(uv+(uv)_x+u_xv_x\Big)}\Phi_{i}\,dx\\
=&\int_{-\infty}^{\infty}{\Big(u^2v^2+\frac{1}{3}u^2v_x^2+\frac{1}{3}v^2u_x^2+\frac{4}{3}uvu_xv_x-\frac{1}{3}u_x^2v_x^2\Big)}\,dx-\frac{4}{3}\,M_{i}^2\Phi_{i}(x_{i})\\
&\qquad+\frac{2}{3}\int^{x_{i}}_{\-\infty}u^2v^2\Phi'_{i}\,dx-\frac{2}{3}\int^{\-\infty}_{x_{i}}u^2v^2\Phi'_{i}\,dx.\\
\end{aligned}
\end{eqnarray}
Recall that we take $K=\sqrt{L}/8$ and thus $|\Phi'|\leq C/K=O(L^{-1/2})$. Moreover, since $|x_{i}-\tilde{x}_{i}|\leq L/12$, it follows from \eqref{weightesi-1} that $\Phi_{i}(x_{i})=1+O(e^{-\sqrt{L}})$ and thus
\begin{eqnarray}\label{localizedfuncESI-2}
\begin{aligned}
\int_{\mathbb R}h(x)g_{u}(x)g_{v}(x)\Phi_{i}dx=&F_{i}(u,v)-\frac{4}{3}M^2_{i}+\|u_{0}\|^2_{H^{1}(\mathbb R)}\|v_{0}\|^2_{H^{1}(\mathbb R)}O(L^{-\frac{1}{2}})+O(e^{-\sqrt{L}}).
\end{aligned}
\end{eqnarray}
On the other hand, we firstly claim that $x_{1}$ is the maximum point of the function $u(x,t)v(x,t)$ on $(-\infty,y_{2}(t)+aL]$, $x_{N}$ is the maximum point of the function $u(x,t)v(x,t)$ on $[y_{N}(t)-aL,+\infty)$ and $x_{i}$ is the maximum point of the function $u(x,t)v(x,t)$ on $[y_{i}(t)-aL,y_{i+1}(t)+aL]$, where $a$ is a constant chosen later and $i\in \{2,...,N-1\}$.

We need to show  $x_{i}$ is the maximum point of the function $u(x,t)v(x,t)$ on $[y_{i}(t)-aL,y_{i}(t)]\,\cup\, [y_{i+1}(t),y_{i+1}(t)+aL]$, where $i\in \{2,...,N-1\}$.
If $x\in [y_{i}(t)-aL,y_{i}(t)]\,\cup\, [y_{i+1}(t),y_{i+1}(t)+aL]$, then
\begin{eqnarray*}
\begin{aligned}
u(x,t)v(x,t)\leq c_{i+1}e^{-\left(\frac{3}{8}-a\right)L}+c_{i}e^{-\frac{3}{8}L}+O(\sqrt{\alpha})+O(e^{-\frac{L}{4}}).
\end{aligned}
\end{eqnarray*}
Choosing $a$ with
\begin{eqnarray*}
\begin{aligned}
c_{i+1}e^{-\left(\frac{3}{8}-a\right)L}+c_{i}e^{-\frac{3}{8}L}+O(\sqrt{\alpha})+O(e^{-\frac{L}{4}})\leq c_{i}-O(\sqrt{\alpha})-O(e^{-\frac{L}{4}}),
\end{aligned}
\end{eqnarray*}
and using the same estimate as above, we have proved similar conclusion of $x_{1}$ and $x_{N}$.

Next, we define the intervals $\hat{J}_{1}=(-\infty,y_{2}(t)+aL]$, $\hat{J}_{N}=[y_{N}(t)-aL,+\infty)$ and for $i=2,...,N-1$, $\hat{J}_{i}=[y_{i}(t)-aL,y_{i+1}(t)+aL]$, where $a$ is chosen above.
\begin{eqnarray}
\begin{aligned}
\int_{\mathbb R}h(x)g_{u}(x)g_{v}(x)\Phi_{i}dx=&\int_{\hat{J}_{i}}h(x)g_{u}(x)g_{v}(x)\Phi_{i}dx+\int_{\hat{J}^c_{i}}h(x)g_{u}(x)g_{v}(x)\Phi_{i}dx\\
\leq & \frac{4}{3}M_{i}\int_{\mathbb R}g_{u}(x)g_{v}(x)\Phi_{i}dx+\int_{\hat{J}^c_{i}}h(x)g_{u}(x)g_{v}(x)\Phi_{i}dx,
\end{aligned}
\end{eqnarray}
where we use the fact $h(x)\leq 4u(x)v(x)/3\leq 4M_{i}/3$ on the $\hat{J}_{i}$. One chooses $L$ large enough satisfying $aL>10\sqrt{L}$, which leads to
\begin{eqnarray*}
\begin{aligned}
\int_{\hat{J}^c_{i}}h(x)g_{u}(x)g_{v}(x)\Phi_{i}dx\leq C\|u_{0}\|^2_{H^{1}(\mathbb R)}\|v_{0}\|^2_{H^{1}(\mathbb R)}e^{-\tilde{a}L}\leq O(L^{-1}),
\end{aligned}
\end{eqnarray*}
where $C$ and $\tilde{a}$ are constants. Therefore,
\begin{eqnarray*}
\begin{aligned}
\frac{4}{3}M^2_{i}-\frac{4}{3}M_{i}H_{i}(u,v)+F_{i}(u,v)\leq O(L^{-\frac{1}{2}}),\quad i\in\{1,...,N\}.
\end{aligned}
\end{eqnarray*}
This completes the proof.
\end{proof}
Now let us state a global identity related to \eqref{energyESI}.
\begin{lemma}\label{globalIDE}
For any $Z\in \mathbb R^N$ satisfying $|z_{i}-z_{i-1}|\geq L/2$ and any $(u,v)\in H^{1}(\mathbb R)\times H^{1}(\mathbb R)$, there holds
\begin{eqnarray*}
\begin{aligned}
&E_{u}(u)-\sum^{N}_{i=1}E_{u}(\varphi_{c_{i}})=\|u-R_{Z}\|^2_{H^1}+4\sum^N_{i=1}a_{i}(u(z_{i})-a_{i})+O\left(e^{-\frac{L}{4}}\right),\\
&E_{v}(v)-\sum^{N}_{i=1}E_{v}(\psi_{c_{i}})=\|v-S_{Z}\|^2_{H^1}+4\sum^N_{i=1}b_{i}(v(z_{i})-b_{i})+O\left(e^{-\frac{L}{4}}\right).
\end{aligned}
\end{eqnarray*}
\end{lemma}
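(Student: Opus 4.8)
The plan is to reduce this global identity to the single-bump identity \eqref{energyESI} by expanding the squared $H^1$-norm and showing that every cross term produced by two distinct peakons is exponentially small in $L$. The only analytic ingredient is the distributional relation $(1-\partial_x^2)\varphi_{c_i}(\cdot-z_i)=2a_i\,\delta(\cdot-z_i)$, which already produced the pointwise correction $4a_i(u(z_i)-a_i)$ in the one-peakon case; here it will simply be applied $N$ times.

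First I would record the elementary inner-product computation. For any $w\in H^1(\mathbb{R})$, integrating by parts (the boundary contribution at the peak is exactly the Dirac mass) gives
\begin{align*}
\langle w,\varphi_{c_i}(\cdot-z_i)\rangle_{H^1}=\int_{\mathbb{R}}w\,(1-\partial_x^2)\varphi_{c_i}(\cdot-z_i)\,dx=2a_i\,w(z_i).
\end{align*}
Taking $w=u$ yields $\langle u,R_Z\rangle_{H^1}=2\sum_{i=1}^N a_i\,u(z_i)$, while taking $w=\varphi_{c_j}(\cdot-z_j)$ gives the pairwise interaction $\langle\varphi_{c_i}(\cdot-z_i),\varphi_{c_j}(\cdot-z_j)\rangle_{H^1}=2a_ia_j\,e^{-|z_i-z_j|}$. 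I would then expand
\begin{align*}
\|u-R_Z\|_{H^1}^2=\|u\|_{H^1}^2-2\langle u,R_Z\rangle_{H^1}+\|R_Z\|_{H^1}^2,
\end{align*}
and split $\|R_Z\|_{H^1}^2$ into its diagonal part $\sum_i\|\varphi_{c_i}\|_{H^1}^2=\sum_i 2a_i^2=\sum_i E_u(\varphi_{c_i})$ and its off-diagonal part $2\sum_{i\neq j}a_ia_j\,e^{-|z_i-z_j|}$. Since the ordered centers satisfy $z_i-z_{i-1}\geq L/2$, any two distinct centers are separated by at least $L/2$, so each off-diagonal term is bounded by $e^{-L/2}=O(e^{-L/4})$ and the finite sum remains $O(e^{-L/4})$.

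Substituting these expansions and using $E_u(u)=\|u\|_{H^1}^2$, all occurrences of $\|u\|_{H^1}^2$ cancel and one is left with
\begin{align*}
E_u(u)-\sum_{i=1}^N E_u(\varphi_{c_i})-\|u-R_Z\|_{H^1}^2=4\sum_{i=1}^N a_i\,u(z_i)-4\sum_{i=1}^N a_i^2+O(e^{-L/4}),
\end{align*}
which is precisely the asserted formula; the $v$-identity follows verbatim with $S_Z$, $\psi_{c_i}$ and amplitudes $b_i$. The argument is entirely linear and amounts to careful bookkeeping of the constants multiplying the pointwise evaluations $u(z_i)$. The only step requiring any care is the control of the off-diagonal interaction terms, but this is immediate from the exponential decay of the peakon profiles combined with the separation hypothesis, so no genuine obstacle arises.
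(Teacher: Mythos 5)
Your proof is correct and follows essentially the same route as the paper: both expand $\|u-R_Z\|_{H^1}^2$, evaluate the cross term $\langle u,\varphi_{c_i}(\cdot-z_i)\rangle_{H^1}=2a_i\,u(z_i)$ (the paper via explicit integration by parts on the two half-lines, you via the equivalent distributional identity $(1-\partial_x^2)\varphi_{c_i}(\cdot-z_i)=2a_i\,\delta(\cdot-z_i)$), and absorb the interactions between distinct peakons into $O(e^{-L/4})$ using the separation hypothesis. Your write-up additionally makes explicit the off-diagonal computation $\langle\varphi_{c_i}(\cdot-z_i),\varphi_{c_j}(\cdot-z_j)\rangle_{H^1}=2a_ia_j e^{-|z_i-z_j|}$, which the paper only asserts is ``easy to check.''
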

\begin{proof}
Using the relation between $\varphi$ and its derivative, integrating by parts, we get
\begin{eqnarray*}
\begin{aligned}
&E_{u}(u-R_{Z})=E_{u}(u)+E_{u}(R_{Z})-2\sum^{N}_{i=1}\int_{\mathbb R}u\varphi_{c_{i}}(\cdot-z_{i})+u_{x}\partial_{x}\varphi_{c_{i}}(\cdot-z_{i})\\
&=E_{u}(u)+E_{u}(R_{Z})-2\sum^{N}_{i=1}\int_{\mathbb R}u\varphi_{c_{i}}(\cdot-z_{i})+2\sum^{N}_{i=1}\int^{\infty}_{z_{i}}u_{x}\varphi_{c_{i}}(\cdot-z_{i})-2\sum^{N}_{i=1}\int^{z_{i}}_{-\infty}u_{x}\varphi_{c_{i}}(\cdot-z_{i})\\
&=E_{u}(u)+E_{u}(R_{Z})-4\sum^{N}_{i=1}a_{i}u(z_{i}).
\end{aligned}
\end{eqnarray*}
On the other hand, since $|z_{i}-z_{i-1}|\geq L/2$, it is easy to check that
\begin{eqnarray*}
\begin{aligned}
E(R_{Z})=\sum^{N}_{i=1}E(\varphi_{c_{i}})+O\left(e^{-\frac{L}{4}}\right).
\end{aligned}
\end{eqnarray*}
Combining these two identities, the desired result follows, and similarly, we obtain the second equality of this lemma.
\end{proof}

\subsection{End of the proof of Theorem 1.2}

Before we give the final proof, we need to prove the following lemmas.
\begin{lemma}\label{FinalESI}
Assume $\|u_{0}-R_{Z^{0}}\|_{H^{1}}+\|v_{0}-S_{Z^{0}}\|_{H^{1}}< \epsilon $. Then for any $i=\{1,...,N\}$, we have
\begin{eqnarray*}
\begin{aligned}
&\left|H_{i}(u_{0},v_{0})-H_{i}(\varphi_{c_{i}},\psi_{c_{i}})\right|<O(\epsilon)+O\left(e^{-\sqrt{L}}\right), \\
&\left|E_{ui}(u_{0})-E_{ui}(\varphi_{c_{i}})\right|<O(\epsilon)+O\left(e^{-\sqrt{L}}\right),\\
&\left|E_{vi}(v_{0})-E_{vi}(\psi_{c_{i}})\right|<O(\epsilon)+O\left(e^{-\sqrt{L}}\right).
\end{aligned}
\end{eqnarray*}
\end{lemma}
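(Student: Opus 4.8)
The plan is to decompose each component into the reference $N$-peakon sum plus a small remainder, and then to split every localized functional into a \emph{perturbative part} controlled by $\epsilon$ and a \emph{main part} evaluated on the peakon sum, which collapses to the single-peakon contribution up to an exponentially small error. Throughout I interpret $\varphi_{c_i},\psi_{c_i}$ in $E_{ui}(\varphi_{c_i})$, $E_{vi}(\psi_{c_i})$, $H_i(\varphi_{c_i},\psi_{c_i})$ as the $i$th peakon centered at $z_i^{0}$, so that it occupies the $i$th slot of $R_{Z^0},S_{Z^0}$.

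First I would set $\tilde u_{0}=u_{0}-R_{Z^{0}}$ and $\tilde v_{0}=v_{0}-S_{Z^{0}}$, so that $\|\tilde u_{0}\|_{H^{1}}+\|\tilde v_{0}\|_{H^{1}}<\epsilon$. For the energy piece I expand
\begin{align*}
E_{ui}(u_{0})-E_{ui}(R_{Z^{0}})=\int_{\mathbb R}\Big(\tilde u_{0}(u_{0}+R_{Z^{0}})+\tilde u_{0x}(u_{0x}+\partial_{x}R_{Z^{0}})\Big)\Phi_{i}\,dx,
\end{align*}
and since $0\le \Phi_{i}\le 1$, the Cauchy--Schwarz inequality bounds the right-hand side by $\|\tilde u_{0}\|_{H^{1}}\big(\|u_{0}\|_{H^{1}}+\|R_{Z^{0}}\|_{H^{1}}\big)=O(\epsilon)$; the estimate for $E_{vi}$ is identical. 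For the bilinear functional I use the identity $u_{0}v_{0}-R_{Z^{0}}S_{Z^{0}}=\tilde u_{0}v_{0}+R_{Z^{0}}\tilde v_{0}$ (and its analogue for the derivative product) to write $H_{i}(u_{0},v_{0})-H_{i}(R_{Z^{0}},S_{Z^{0}})$ as an integral of terms each carrying one factor $\tilde u_{0}$ or $\tilde v_{0}$, again bounded by $O(\epsilon)$ via Cauchy--Schwarz and $\Phi_{i}\le 1$.

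It then remains to compare the main parts $E_{ui}(R_{Z^{0}})$, $E_{vi}(S_{Z^{0}})$ and $H_{i}(R_{Z^{0}},S_{Z^{0}})$ with the single-peakon values. Expanding $R_{Z^{0}}=\sum_{j}\varphi_{c_{j}}(\cdot-z_{j}^{0})$ and $S_{Z^{0}}=\sum_{j}\psi_{c_{j}}(\cdot-z_{j}^{0})$, the diagonal term with index $i$ reproduces exactly the single-peakon functional, while every remaining contribution is either a product of two peakons centered at distinct $z_{j}^{0}\ne z_{k}^{0}$, or a squared peakon $\varphi_{c_{j}}^{2}$ (resp. $\psi_{c_j}^2$) with $j\ne i$. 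Here I would invoke the separation $z_{j}^{0}-z_{j-1}^{0}\ge L$ together with the exponential decay of $\varphi_{c},\psi_{c}$ and the cutoff estimates \eqref{weightesi-1}--\eqref{weightesi-2}: a product of peakons centered at points at distance $\ge L$ is pointwise dominated by $e^{-L}$ and integrates to a quantity far smaller than $e^{-\sqrt{L}}$, whereas a peakon centered at $z_{j}^{0}$ with $j\ne i$ is multiplied by the weight $\Phi_{i}$, which by \eqref{weightesi-2} is $O(e^{-L/(8K)})=O(e^{-\sqrt{L}})$ on $[z_{j}^{0}-L/4,z_{j}^{0}+L/4]$ because $K=\sqrt{L}/8$, and whose complement carries only the exponentially small tail of the peakon itself. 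Summing the finitely many such terms yields $E_{ui}(R_{Z^{0}})=E_{ui}(\varphi_{c_{i}})+O(e^{-\sqrt{L}})$, and similarly for $E_{vi}$ and $H_{i}$.

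Combining the two estimates gives the claimed bound $O(\epsilon)+O(e^{-\sqrt{L}})$ for each functional. I expect the only delicate point to be the careful bookkeeping in the main-part reduction: one must verify that each off-diagonal and off-center term is absorbed either by the pointwise exponential smallness of products of well-separated peakons or by the exponential smallness of $\Phi_{i}$ away from its own bump, and that the transition regions of the cutoffs, of width $O(K)=O(\sqrt{L})$, contribute no more than $O(e^{-\sqrt{L}})$. Since every functional appearing in this lemma is at most bilinear, no genuinely new analytic difficulty arises beyond this counting, in contrast with the quartic functional $F_{i}$, whose localization is handled separately in Lemma \ref{localizedESI}.
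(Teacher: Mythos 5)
Your proposal is correct, and it reaches the conclusion by a genuinely different decomposition than the paper. The paper localizes first and perturbs second: it restricts all integrals to $\bar{J}_{i}=[z_{i}^{0}-L/4,\,z_{i}^{0}+L/4]$, where $\Phi_{i}=1+O(e^{-\sqrt{L}})$ by \eqref{weightesi-1} and every peakon other than the $i$th is $O(e^{-L/8})$, observes that consequently $\|u_{0}-\varphi_{c_{i}}\|_{H^{1}(\bar{J}_{i})}+\|v_{0}-\psi_{c_{i}}\|_{H^{1}(\bar{J}_{i})}<\epsilon+O(e^{-L/8})$, and then reuses the bilinear perturbation estimates of Lemma \ref{lem1.5} on that interval. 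You perturb first and localize second: the step from $(u_{0},v_{0})$ to $(R_{Z^{0}},S_{Z^{0}})$ costs $O(\epsilon)$ by Cauchy--Schwarz with $0\le\Phi_{i}\le 1$, and the collapse of the weighted functionals of the full sums to the single-peakon values costs $O(e^{-\sqrt{L}})$ by expanding into the diagonal term, the off-center squared terms, and the cross terms, handled with \eqref{weightesi-1}--\eqref{weightesi-2} and the separation $z_{j}^{0}-z_{j-1}^{0}\ge L$. The ingredients are identical, but your bookkeeping cleanly separates the two error sources (data error $O(\epsilon)$ versus localization error $O(e^{-\sqrt{L}})$) and replaces the paper's rather loose appeal to ``the same way in proving the third inequality'' of the single-peakon lemma by explicit bilinear estimates; the paper's route, in exchange, keeps every estimate supported near $z_{i}^{0}$ and recycles the single-peakon machinery verbatim, which is the pattern followed throughout Section 4. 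Your remark on the interpretation of $E_{ui}(\varphi_{c_{i}})$ is also well taken: the paper implicitly uses the unweighted value $\int_{\mathbb{R}}\left(\varphi_{c_{i}}\psi_{c_{i}}+\varphi_{c_{i}}'\psi_{c_{i}}'\right)dx$ while you use the weighted one, but the two differ by $O(e^{-\sqrt{L}})$, so nothing changes. One small slip to repair: for a cross term with $j\ne k$, the pointwise bound $\varphi_{c_{j}}\varphi_{c_{k}}\lesssim e^{-L}$ by itself does not control an integral over all of $\mathbb{R}$; write instead $\int_{\mathbb{R}}e^{-|x-z_{j}^{0}|-|x-z_{k}^{0}|}\,dx=\left(1+|z_{j}^{0}-z_{k}^{0}|\right)e^{-|z_{j}^{0}-z_{k}^{0}|}=O\left(Le^{-L}\right)$, which is still far below $e^{-\sqrt{L}}$.
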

\begin{proof}
Define the interval $\bar{J}_{i}=\left[z^0_{i}-L/4, \,z^0_{i}+L/4\right]$. Using \eqref{weightesi-1}, \eqref{weightesi-2} and the exponential decay of $\varphi_{c_{i}}$'s and the $\psi_{c_{i}}$'s, we have
\begin{eqnarray*}
\begin{aligned}
&\left|H_{i}(\varphi_{c_{i}},\psi_{c_{i}})-H_{i}(u_{0},v_{0})\right|=\left|\int_{\mathbb R}\varphi_{c_{i}}\psi_{c_{i}}+\varphi'_{c_{i}}\psi'_{c_{i}}\,dx-\int_{\mathbb R}\left(u_{0}v_{0}+u_{0x}v_{0x}\right)\Phi_{i}\,dx\right|\\
\leq &\left|\int_{\bar{J}_{i}}\varphi_{c_{i}}\psi_{c_{i}}+\varphi'_{c_{i}}\psi'_{c_{i}}\,dx-\int_{\bar{J}_{i}}\left(u_{0}v_{0}+u_{0x}v_{0x}\right)\,dx\right|+O\left(e^{-\sqrt{L}}\right)+O\left(e^{-\frac{L}{8}}\right).
\end{aligned}
\end{eqnarray*}
Since $\|u_{0}-R_{Z}\|_{H^1}+\|v_{0}-S_{Z}\|_{H^1}<\epsilon$, then we obtain
\begin{eqnarray*}
\begin{aligned}
&\|u_{0}-\varphi_{c_{i}}\|_{H^1(\bar{J}_{i})}+\|v_{0}-\psi_{c_{i}}\|_{H^1(\bar{J}_{i})}\\
&\leq |u_{0}-R_{Z}\|_{H^1(\bar{J}_{i})}+\|v_{0}-S_{Z}\|_{H^1(\bar{J}_{i})}+\sum_{i\ne j}\left(\|\varphi_{c_{j}}\|_{H^1(\bar{J}_{i})}+\|\psi_{c_{j}}\|_{H^1(\bar{J}_{i})}\right)\\
&\leq |u_{0}-R_{Z}\|_{H^1(\mathbb R)}+\|v_{0}-S_{Z}\|_{H^1(\mathbb R)}+O\left(e^{-\frac{L}{8}}\right)<  \epsilon +O\left(e^{-\frac{L}{8}}\right).
\end{aligned}
\end{eqnarray*}
Using the same way in proving the third inequality of \eqref{lem1.3}, we obtain
\begin{eqnarray*}
\begin{aligned}
\left|H_{i}(\varphi_{c_{i}},\psi_{c_{i}})-H_{i}(u_{0},v_{0})\right|<O(\epsilon)+O\left(e^{-\sqrt{L}}\right).
\end{aligned}
\end{eqnarray*}
Similarly,
\begin{eqnarray*}
\left|E_{ui}(\varphi_{c_{i}})-E_{ui}(u_{0})\right|<O(\epsilon)+O\left(e^{-\sqrt{L}}\right)\; {\rm and} \; \left|E_{vi}(\psi_{c_{i}})-E_{vi}(v_{0})\right|<O(\epsilon)+O\left(e^{-\sqrt{L}}\right).
\end{eqnarray*}
\end{proof}
\begin{lemma}\label{productESI}
Assume $u\in H^s$,$v\in H^s$, $s>3$ and $u\geq 0$, $v\geq 0$. Let
\begin{align*}
M_{i}=u(x_{i})v(x_{i})=\max_{x\in J_{i}}u(x)v(x),
\end{align*}
and set $\alpha_{0}=A({\epsilon_0}^{1/4}+L_{0}^{-1/8})$. Then we have
\begin{align*}
|M_{i}-c_{i}|\leq O\big(L^{-\frac{1}{4}}\big)+O\big(\epsilon^{\frac{1}{2}}\big),\quad  i=1,...,N.
\end{align*}
\end{lemma}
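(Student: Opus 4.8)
The plan is to localize the polynomial argument behind the single-peakon estimate (Lemma 3.6). Fix $i\in\{1,\dots,N\}$ and set $P_i(y)=\frac{4}{3}y^2-\frac{4}{3}H_i(u,v)\,y+F_i(u,v)$, so that Lemma \ref{localizedESI} is exactly $P_i(M_i)\le O(L^{-1/2})$. On the window where the $i$th bump concentrates one has $\Phi_i=1+O(e^{-\sqrt L})$ by \eqref{weightesi-1}, and since $\varphi_{c_i},\psi_{c_i}$ decay exponentially this gives $H_i(\varphi_{c_i},\psi_{c_i})=2c_i+O(e^{-\sqrt L})$ and $F_i(\varphi_{c_i},\psi_{c_i})=\frac{4}{3}c_i^2+O(e^{-\sqrt L})$; hence the peakon polynomial is $\frac{4}{3}(y-c_i)^2$ up to $O(e^{-\sqrt L})$. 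Writing $P_i$ as this square plus the two defects and evaluating at $y=M_i$ yields
\[
\frac{4}{3}(M_i-c_i)^2\le O(L^{-1/2})+\frac{4}{3}\big(H_i(u,v)-2c_i\big)M_i-\Big(F_i(u,v)-\frac{4}{3}c_i^2\Big)+O(e^{-\sqrt L}).
\]
Since $2u^2(x_i)\le\|u\|_{H^1}^2$ and $2v^2(x_i)\le\|v\|_{H^1}^2$, conservation of $E_u,E_v$ gives $M_i=O(1)$; as $M_i>0$ the estimate reduces to a \emph{per-bump upper bound} on $H_i(u(t),v(t))$ and a \emph{lower bound} on $F_i(u(t),v(t))$, each up to $O(\epsilon)+O(e^{-\sqrt L})$, after which a square root produces $|M_i-c_i|\le O(L^{-1/4})+O(\epsilon^{1/2})$.

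These two requirements in fact collapse to one. Granting the upper bound $H_i\le 2c_i+O(\epsilon)+O(e^{-c\sqrt L})$, Lemma \ref{localizedESI} gives $F_i\le\frac{4}{3}M_i(H_i-M_i)+O(L^{-1/2})\le\frac{1}{3}H_i^2+O(L^{-1/2})\le\frac{4}{3}c_i^2+O(\epsilon)+O(L^{-1/2})$ for every $i$; summing over $i$ and using that $F=\sum_iF_i$ is conserved with $F[u,v]=\frac{4}{3}\sum_ic_i^2+O(\epsilon)+O(e^{-\sqrt L})$ (the global analogue of Lemma \ref{lem1.5}) then returns the complementary lower bound $F_k\ge\frac{4}{3}c_k^2-O(\epsilon)-O(L^{-1/2})$ for each $k$. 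Thus the crux is to produce, at the evolved time $t$, the single family of upper bounds $H_i(u(t),v(t))\le 2c_i+O(\epsilon)+O(e^{-c\sqrt L})$; at $t=0$ these hold by Lemma \ref{FinalESI}.

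The main obstacle is propagating this upper bound \emph{sharply}. Writing $H_i=\mathcal{J}^{u,v}_{i,K}-\mathcal{J}^{u,v}_{i+1,K}$, Proposition \ref{monotonicity} (with $K=\sqrt L/8$, so the error is $O(e^{-\sigma_0\sqrt L})$) controls each tail $\mathcal{J}^{u,v}_{j,K}(t)$ only from above, whereas bounding $H_i$ from above needs a matching lower bound on the tail $\mathcal{J}^{u,v}_{i+1,K}(t)$ that the almost non-increasing monotonicity does not supply; the crude modulation bound \eqref{preestimate} would only yield the far weaker $O(\sqrt\alpha)$ loss, which is insufficient for the claimed $O(\epsilon^{1/2})$. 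I expect to overcome this by the induction announced in the introduction, descending in $i$ from $N$ to $1$: the strict ordering $0<c_1<\cdots<c_N$ forces, through \eqref{infspeed}, the interface $y_{i+1}(t)$ to lag the bumps $k\ge i+1$ while outrunning the slower ones, so $\mathcal{J}^{u,v}_{i+1,K}$ retains from below the energy of the bumps already localized at the previous induction step, furnishing the missing lower bound. Once every $H_i$ is controlled from above, the two reductions above and a square root complete the proof.
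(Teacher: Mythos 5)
Your reduction of the lemma to the family of per-bump upper bounds $H_i(u(t),v(t))\le 2c_i+O(\epsilon)+O(e^{-c\sqrt L})$ is sound, and so is the "collapse": such bounds would give $F_i\le \tfrac13 H_i^2+O(L^{-1/2})\le\tfrac43 c_i^2+O(\epsilon)+O(L^{-1/2})$, and conservation of $F$ would return the needed lower bounds on each $F_k$. The gap is precisely in the one step you leave open: the descending induction cannot deliver those upper bounds at the stated accuracy, because the feedback is quadratic-to-linear. What the induction hypothesis at step $i$ provides is $|M_k-c_k|\le O(\epsilon^{1/2})+O(L^{-1/4})$ for $k>i$; even using the pointwise positivity $uv+u_xv_x\ge0$ and the localized version of Lemma \ref{lem1.2} with the smooth cutoffs $\Phi_k$ (so that the valley/boundary terms cost only $O(L^{-1/2})$ rather than $O(\alpha_0)$), the best resulting lower bound is $\mathcal{J}^{u,v}_{i+1,K}(t)\ge \sum_{k>i}2c_k-O(\epsilon^{1/2})-O(L^{-1/4})$, hence $H_i(t)\le 2c_i+O(\epsilon^{1/2})+O(L^{-1/4})$, hence $(M_i-c_i)^2\le O(\epsilon^{1/2})+O(L^{-1/4})$ and only $|M_i-c_i|\le O(\epsilon^{1/4})+O(L^{-1/8})$. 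Each step below $i=N$ loses a square root, so after descending the bound degrades to rates like $O(\epsilon^{2^{-(N-i+1)}})$, not the claimed $O(\epsilon^{1/2})+O(L^{-1/4})$. If instead you feed in the modulation bound \eqref{preestimate}, you inject errors $O(\sqrt{\alpha_0})$ with $\alpha_0=A(\epsilon_0^{1/4}+L_0^{-1/8})$, which depend on $A$ and are even larger; that is fatal, since the end of the proof of Theorem 1.2 requires constants independent of $A$. There is also a circularity in the bookkeeping: to convert the upper bound on $H_i$ into $|M_i-c_i|$ you need the lower bound on $F_i$, but your collapse derives it from the upper bounds on $H_j$ for all $j$, including $j<i$ which the descending induction has not yet reached.

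The paper circumvents, rather than overcomes, the one-sided monotonicity obstacle: it never needs lower bounds on the tails $\mathcal{J}^{u,v}_{i+1,K}(t)$. One sums the localized polynomial inequalities $\bar{P}^{i}(M_i)\le O(L^{-1/2})$ over $i$ before extracting any square root, so the dangerous quantity is the single sum $\sum_i M_i\big(H_i(u(t),v(t))-H_i(u_0,v_0)\big)$. Writing $H_i=\mathcal{J}^{u,v}_{i,K}-\mathcal{J}^{u,v}_{i+1,K}$ and applying an Abel (summation-by-parts) transformation, this equals $\sum_{j=2}^{N}(M_j-M_{j-1})\big(\mathcal{J}^{u,v}_{j,K}(t)-\mathcal{J}^{u,v}_{j,K}(0)\big)$, the boundary term vanishing by conservation of the total $H$. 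Each difference is $\le O(e^{-\sigma_0\sqrt L})$ by Proposition \ref{monotonicity}, while $M_j-M_{j-1}\ge c_j-c_{j-1}-O(\sqrt{\alpha_0})>0$ by the rough modulation bounds, which therefore enter only as a \emph{sign condition} and never additively. Combined with Lemma \ref{FinalESI} at $t=0$ and conservation of $F$ (so that $\sum_iF_i(u(t),v(t))=\sum_iF_i(u_0,v_0)$), this yields $\sum_i(M_i-c_i)^2\le O(\epsilon)+O(L^{-1/2})$ for all bumps at once. Note finally that the induction "announced in the introduction" is the one in Lemma \ref{FLGJ}, where the hypothesis enters linearly (from $u^2(x_k)\le a_k^2+\eta$ one gets $u(x_k)\le a_k+\eta/(2a_k)$, with $a_k$ bounded below), so nothing is lost there; transplanted into the present quadratic setting it loses a square root per step, which is exactly why your scheme fails.
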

\begin{proof}
Define the second-order polynomials $\bar{P}^{i}$ and $\hat{P}^{i}$ by:
\begin{align*}
\bar{P}^{i}(y)=y^2-H_{i}(u,v)y+\frac{3}{4}F_{i}(u,v) \;\; \mathrm{and} \;\; \hat{P}^{i}(y)=y^2-H_{i}(u_0,v_0)y+\frac{3}{4}F_{i}(u_{0},v_{0}).
\end{align*}
For the peaked solution, $H(\varphi_{c_{i}},\psi_{c_{i}})=2c_{i}$ and $F(\varphi_{c_{i}},\psi_{c_{i}})=4c^2_{i}/3$, the above polynomial becomes
\begin{eqnarray}\label{finalESI-1}
\begin{aligned}
&\hat{P}^{i}_{0}(y)=y^2-2c_{i}y+c^2_{i}=(y-c_{i})^2.
\end{aligned}
\end{eqnarray}
Since
\begin{align*}
\bar{P}^{i}(y)=&\hat{P}^{i}_{0}(y)+\left(H(\varphi_{c_{i}},\psi_{c_{i}})-H_{i}(u_0,v_0)\right)y+\left(H_{i}(u_0,v_0)-H_{i}(u,v)\right)y\\
&\qquad +\frac{3}{4}\left[\left(F_{i}(u_{0},v_{0})-F(\varphi_{c_{i}},\psi_{c_{i}})\right)+\left(F_{i}(u,v)-F_{i}(u_{0},v_{0})\right)\right],
\end{align*}
it follows that
\begin{eqnarray*}
\begin{aligned}
\sum^{N}_{i=1}\bar{P}^{i}&(M_{i})=\sum^{N}_{i=1}\hat{P}^{i}_{0}(M_{i})+\sum^{N}_{i=1}M_{i}\left(H(\varphi_{c_{i}},\psi_{c_{i}})-H_{i}(u_0,v_0)\right)\\
&+\frac{4}{3}\Big(F(u_{0},v_{0})-\sum^{N}_{i=1}F(\varphi_{c_{i}},\psi_{c_{i}})\Big)+\sum^N_{i=1}M_{i}\Big((H_{i}(u_0,v_0)-H_{i}(u,v)\Big)\leq O(L^{-\frac{1}{2}}).
\end{aligned}
\end{eqnarray*}
Using the Abel transformation of the last term and the monotonicity property and combining Lemma \eqref{FinalESI}, we thus get
\begin{align*}
\sum^{N}_{i=1}(M_{i}-c_{i})^2\leq &\sum^{N}_{i=1}M_{i}\left|H(\varphi_{c_{i}},\psi_{c_{i}})-H_{i}(u_0,v_0)\right|+\frac{3}{4}\Big|F(u_{0},v_{0})-F(R_{Z},S_{Z})\Big|+O\big(L^{-\frac{1}{2}}\big)+O(\epsilon)\\
\leq & O\big(L^{-\frac{1}{2}}\big)+O(\epsilon),
\end{align*}
which completes the proof of the lemma.
\end{proof}

\begin{lemma}\label{FLGJ}
Assume $|M_{i}-c_{i}|\leq O\left(L^{-1/4}\right)+O\left(\epsilon^{1/2}\right),\quad i=1,...,N$. Then for any $i=\{1,...,N\}$, we have
\begin{align*}
|u(x_{i})-a_{i}|\leq O\left(L^{-1/4}\right)+O\Big(\epsilon^{1/2}\Big)\quad {\rm and} \quad |v(x_{i})-b_{i}|\leq O\left(L^{-1/4}\right)+O\left(\epsilon^{1/2}\right).
\end{align*}
\end{lemma}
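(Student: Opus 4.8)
The plan is to mirror the two-step strategy used at the end of the proof of Lemma 3.6, now localized around each bump: first produce the \emph{upper} bounds $u(x_i)\le a_i+O(L^{-1/4})+O(\epsilon^{1/2})$ and $v(x_i)\le b_i+O(L^{-1/4})+O(\epsilon^{1/2})$, and then combine these with the already established lower bound $M_i=u(x_i)v(x_i)\ge c_i-O(L^{-1/4})-O(\epsilon^{1/2})$ of Lemma \ref{productESI} to squeeze out the matching \emph{lower} bounds for $u(x_i)$ and $v(x_i)$.

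The basic tool for the upper bounds is a localized pointwise inequality. Since $u\ge 0$, integrating $(u^{2}\Phi_i)_x=2uu_x\Phi_i+u^{2}\Phi_i'$ separately from $-\infty$ to $x_i$ and from $x_i$ to $+\infty$ and using $2uu_x\le u^{2}+u_x^{2}$ gives
\[
2u(x_i)^{2}\,\Phi_i(x_i)\le E_{ui}(u(t))+\int_{\mathbb R}u^{2}\,|\Phi_i'|\,dx .
\]
Because $K=\sqrt L/8$ we have $|\Phi_i'|\le C/K=O(L^{-1/2})$, so that $\int_{\mathbb R}u^{2}|\Phi_i'|\,dx\le \|\Phi_i'\|_{L^\infty}\|u\|_{L^{2}}^{2}=O(L^{-1/2})$; and since $|x_i-\tilde x_i|\le L/12$, \eqref{weightesi-1} yields $\Phi_i(x_i)=1+O(e^{-\sqrt L})$. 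Hence $2u(x_i)^{2}\le E_{ui}(u(t))+O(L^{-1/2})$, and likewise $2v(x_i)^{2}\le E_{vi}(v(t))+O(L^{-1/2})$. Thus everything reduces to the local-energy upper bound $E_{ui}(u(t))\le 2a_i^{2}+O(L^{-1/4})+O(\epsilon^{1/2})$ and its $v$-analogue.

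The heart of the argument, and the main obstacle, is this local-energy bound. Writing $\mathcal J^{u}_{i,K}=\int_{\mathbb R}(u^{2}+u_x^{2})\Psi_{i,K}$ for the energy to the right of $y_i$, one has $E_{ui}(u(t))=\mathcal J^{u}_{i,K}(t)-\mathcal J^{u}_{i+1,K}(t)$, and the almost-monotonicity of Proposition \ref{monotonicity} controls $\mathcal J^{u}_{i,K}(t)$ only from above; it gives no direct lower bound on $\mathcal J^{u}_{i+1,K}(t)$, which is precisely what an upper bound on $E_{ui}(u(t))$ requires. The plan is to defeat this one-sidedness by an induction on the bumps from right to left, which is where the pointwise inequality read \emph{in reverse}, $E_{ui'}(u(t))\ge 2u(x_{i'})^{2}-O(L^{-1/2})$, becomes essential. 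For $i=N$ there is no obstruction: $E_{uN}(u(t))=\mathcal J^{u}_{N,K}(t)\le \mathcal J^{u}_{N,K}(0)+O(e^{-\sqrt L})\le 2a_N^{2}+O(\epsilon)+O(e^{-\sqrt L})$ by Proposition \ref{monotonicity} and Lemma \ref{FinalESI}, so the pointwise bound gives $u(x_N)\le a_N+O(L^{-1/4})+O(\epsilon^{1/2})$ and, symmetrically, $v(x_N)\le b_N+O(L^{-1/4})+O(\epsilon^{1/2})$; combined with $M_N\ge c_N-O(L^{-1/4})-O(\epsilon^{1/2})$ this yields the two-sided control of bump $N$. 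For the inductive step, assuming the conclusion for all bumps to the right of $i$, the reverse pointwise inequality and the already-known $u(x_{i'})\ge a_{i'}-O(L^{-1/4})-O(\epsilon^{1/2})$ for $i'>i$ furnish the missing lower bound $\mathcal J^{u}_{i+1,K}(t)=\sum_{i'\ge i+1}E_{ui'}(u(t))\ge \sum_{i'\ge i+1}2a_{i'}^{2}-O(L^{-1/4})-O(\epsilon^{1/2})$. Together with $\mathcal J^{u}_{i,K}(t)\le \sum_{i'\ge i}2a_{i'}^{2}+O(\epsilon)+O(e^{-\sqrt L})$ (Proposition \ref{monotonicity} and the summed Lemma \ref{FinalESI}), telescoping gives $E_{ui}(u(t))\le 2a_i^{2}+O(L^{-1/4})+O(\epsilon^{1/2})$, hence $u(x_i)\le a_i+O(L^{-1/4})+O(\epsilon^{1/2})$, and identically for $v$.

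Finally the lower bounds close the induction exactly as in Lemma 3.6: from $u(x_i)\le a_i+O(L^{-1/4})+O(\epsilon^{1/2})$ together with $M_i=u(x_i)v(x_i)\ge a_ib_i-O(L^{-1/4})-O(\epsilon^{1/2})$ and $u(x_i)>0$ one gets
\[
v(x_i)\ge \frac{M_i}{u(x_i)}\ge \frac{a_ib_i-O(L^{-1/4})-O(\epsilon^{1/2})}{a_i+O(L^{-1/4})+O(\epsilon^{1/2})}\ge b_i-O(L^{-1/4})-O(\epsilon^{1/2}),
\]
and symmetrically $u(x_i)\ge a_i-O(L^{-1/4})-O(\epsilon^{1/2})$, giving the desired $|u(x_i)-a_i|$ and $|v(x_i)-b_i|$ estimates. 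The only bookkeeping to watch is that the error terms accumulate over the fixed number $N$ of inductive steps (so the implicit constants absorb $N$) and that passing from $2u(x_i)^{2}\le 2a_i^{2}+O(\cdot)$ to $u(x_i)\le a_i+O(\cdot)$ costs a harmless square root since all errors are already of size $O(L^{-1/4})+O(\epsilon^{1/2})$.
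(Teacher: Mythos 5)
Your proposal is correct and follows essentially the same route as the paper's proof: the localized pointwise inequality $2u^2(x_i)\Phi_i(x_i)\le E_{ui}(u(t))+O(L^{-1/2})$ (which the paper obtains from $0\le\int_{\mathbb R}g_{uN}^2\Phi_N\,dx$, equivalent to your integration by parts), the almost-monotonicity of $\mathcal{J}^{u}_{j,K}$ combined with Lemma \ref{FinalESI} at $t=0$, a right-to-left induction on the bumps to supply the lower bounds that monotonicity alone cannot give, and the final squeeze of $u(x_i)$, $v(x_i)$ against the product bound from Lemma \ref{productESI}. The paper organizes the inductive step by summing the pointwise inequalities over $i\ge k-1$ rather than telescoping $E_{ui}=\mathcal{J}^u_{i,K}-\mathcal{J}^u_{i+1,K}$, but this is the same chain of estimates rearranged.
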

Note the monotonicity property that $\mathcal{J}^{u}_{j,K}(t)$ is close to $\left\|u(t)\right\|_{H^{1}(x>y_{j}(t))}$ and thus measures the energy at the right of the $(j-1)th$ bump of $u$ and  $\mathcal{J}^{v}_{j,K}(t)$ is close to $\left\|v(t)\right\|_{H^{1}(x>y_{j}(t))}$ and thus measures the energy at the right of the $(j-1)th$ bump of $v$. We thus use the induction argument to prove this lemma.
\begin{proof}
{Step 1.} 
Define
\begin{equation}
g_{uN}(x)=
\left\{
\begin{aligned}
&u(x)-u_{x}(x), \quad x<x_N,\\
&u(x)+u_{x}(x), \quad x>x_N.
 \end{aligned}
\right.
\end{equation}
Then we have
\begin{eqnarray*}
\begin{aligned}
0\leq \int_{\mathbb R}g^2_{uN}(x)\Phi_{N}(x)\,dx=&\int_{\mathbb R}\left(u^2+u^2_{x}\right)\Phi_{N}(x)\,dx-2u^{2}(x_{N})\Phi_{N}(x_N)\\
&+\int^{x_{N}}_{-\infty}u^2\Phi'_{N}\,dx-\int^{+\infty}_{x_{N}}u^2\Phi'_{N}\,dx.
\end{aligned}
\end{eqnarray*}
Using $|x_{i}-\tilde{x}_{i}|\leq L/12$, it follows from \eqref{weightesi-1} that $\Phi_{i}(x_{i})=1+O(e^{-\sqrt{L}})$ and thus
\begin{align*}
E_{uN}(u(t))-2u^2(x_{N})+O\left(L^{-\frac{1}{2}}\right)\geq 0.
\end{align*}
Note that
\begin{eqnarray*}
E_{uN}(u(t))=\mathcal{J}^u_{N,K}(t)\quad {\rm and} \quad E_{uN}(u(t))-E_{uN}(u(0))\leq O\big(e^{-\sigma_{0}\sqrt{L}}\big).
\end{eqnarray*}
We thus get
\begin{eqnarray*}
\begin{aligned}
u^2(x_{N})\leq O(L^{-\frac{1}{2}})+O(\epsilon)+a^2_{N}.
\end{aligned}
\end{eqnarray*}
Similarly,
\begin{eqnarray*}
\begin{aligned}
v^2(x_{N})\leq O(L^{-\frac{1}{2}})+O(\epsilon)+b^2_{N}.
\end{aligned}
\end{eqnarray*}
Combining the identity $|M_{N}-c_{N}|\leq O\left(L^{-1/4}\right)+O\left(\epsilon^{1/2}\right)$, we arrive at
\begin{eqnarray*}
\begin{aligned}
|u(x_{N})-a_{N}|\leq O\big(L^{-\frac{1}{4}}\big)+O\big(\epsilon^{\frac{1}{2}}\big) \quad {\rm and} \quad |v(x_{N})-b_{N}|\leq O\big(L^{-\frac{1}{4}}\big)+O\big(\epsilon^{\frac{1}{2}}\big).
\end{aligned}
\end{eqnarray*}
{Step 2.}  Assume for any $k\leq i<N$, we have
\begin{align*}
|u(x_{i})-a_{i}|\leq O\big(L^{-\frac{1}{4}}\big)+O\big(\epsilon^{\frac{1}{2}}\big) \quad {\rm and} \quad |v(x_{i})-b_{i}|\leq O\big(L^{-\frac{1}{4}}\big)+O\big(\epsilon^{\frac{1}{2}}\big).
\end{align*}
We need to prove
\begin{align*}
|u(x_{i-1})-a_{i-1}|\leq O\big(L^{-\frac{1}{4}}\big)+O\big(\epsilon^{\frac{1}{2}}\big) \quad {\rm and} \quad |v(x_{i-1})-b_{i-1}|\leq O\big(L^{-\frac{1}{4}}\big)+O\big(\epsilon^{\frac{1}{2}}\big).
\end{align*}
Firstly, we have
\begin{align*}
\sum^{N}_{i=k-1}u^2(x_{i})\leq &\;\frac{1}{2}\sum^{N}_{i=k-1}E_{ui}(u(t))+O(L^{-\frac{1}{2}})\\
\leq &\; \frac{1}{2}\left(\mathcal{J}^u_{k-1,K}(t)-\mathcal{J}^u_{k-1,K}(0)\right)+\frac{1}{2}\Big(\mathcal{J}^u_{k-1,K}(0)-\sum^{N}_{i=k-1}E_{u}(\varphi_{c_{i}})\Big)\\
&\quad + \frac{1}{2}\sum^{N}_{i=k-1}E_{u}(\varphi_{c_{i}})+O\Big(L^{-\frac{1}{2}}\Big).
\end{align*}
It follows that
\begin{align*}
\sum^{N}_{i=k-1}u^2(x_{i})\leq \frac{1}{2}\sum^{N}_{i=k-1}E_{u}(\varphi_{c_{i}})+O\Big(L^{-\frac{1}{2}}\Big)+O(\epsilon).
\end{align*}
Using the assumption, we find
\begin{align*}
u^2(x_{k-1})\leq a^2_{k-1}+O\big(L^{-\frac{1}{4}}\big)+O\left(\sqrt{\epsilon}\right).
\end{align*}
Similarly
\begin{align*}
v^2(x_{k-1})\leq b^2_{k-1}+O\big(L^{-\frac{1}{4}}\big)+O\left(\sqrt{\epsilon}\right).
\end{align*}
Those together with the identity \eqref{productESI} imply that for any $i=1,\cdots ,N$
\begin{align*}
|u(x_{i})-a_{i}|\leq O\big(L^{-\frac{1}{4}}\big)+O\big(\epsilon^{\frac{1}{2}}\big) \quad {\rm and} \quad |v(x_{i})-b_{i}|\leq O\big(L^{-\frac{1}{4}}\big)+O\big(\epsilon^{\frac{1}{2}}\big).
\end{align*}
\end{proof}

\begin{proof} [End of the Proof of Theorem 1.2]
To conclude the proof, from the \eqref{globalIDE}, it thus suffices to prove that there exists $C>0$ which does not depend on $A$ such that
\begin{eqnarray*}
\begin{aligned}
|u(x_{N})-a_{N}|\leq C\Big(L^{-\frac{1}{4}}+\epsilon^{\frac{1}{2}}\Big) \quad {\rm and} \quad |v(x_{N})-b_{N}|\leq C\Big(L^{-\frac{1}{4}}+\epsilon^{\frac{1}{2}}\Big),
\end{aligned}
\end{eqnarray*}
which has been verified by \eqref{FLGJ}.
From \eqref{initial-3-3} and \eqref{initial-3-5}, we know that for $i=2,...,N$,
\begin{eqnarray*}
\begin{aligned}
x_{i}-x_{i-1}\geq \frac{2}{3}L.
\end{aligned}
\end{eqnarray*}
This completes the proof of the theorem.
\end{proof}

\vglue .5cm

\vskip 0.2cm
\noindent {\bf Acknowledgments.} The work of He is partially supported by the NSF-China grant-11971251.  The work of Liu is partially supported by the NSF-China grant-12271424 and  grant-11871395. The work of Qu is partially supported by the NSF-China grant-11631007 and grant-11971251.

\vskip 1cm

\end{document}